\documentclass[12pt,letterpaper]{article}

\pdfoutput=1


\usepackage{graphics,epsfig,graphicx,color}
\graphicspath{{/EPSF/}{../Figures/}{Figures/}}
\usepackage{subfigure}


\usepackage{amssymb,latexsym}

\usepackage{setspace}

\usepackage{amsmath}

\usepackage{mathrsfs,amsfonts}

\usepackage{amsthm,amsxtra}

\usepackage[final]{showkeys}

\usepackage{stmaryrd}

\usepackage{algorithm}
\usepackage{algorithmicx}
\usepackage{algpseudocode}

\usepackage[openbib]{currvita}


\newif\ifPDF
\ifx\pdfoutput\undefined
	\PDFfalse
\else
	\ifnum\pdfoutput > 0
		\PDFtrue
	\else
		\PDFfalse
	\fi
\fi

\ifPDF
	\usepackage{pdftricks}
	\begin{psinputs}
		\usepackage{pstricks}
		\usepackage{pstcol}
		\usepackage{pst-plot}
		\usepackage{pst-tree}
		\usepackage{pst-eps}
		\usepackage{multido}
		\usepackage{pst-node}
		\usepackage{pst-eps}
	\end{psinputs}
\else
	\usepackage{pstricks}
\fi

\ifPDF
	\usepackage[debug,pdftex,colorlinks=true, 
	linkcolor=blue, bookmarksopen=false,
	plainpages=false,pdfpagelabels]{hyperref}
\else
	\usepackage[dvips]{hyperref}
\fi

\pdfminorversion=6


\usepackage{fancyhdr}


\newtheorem{theorem}{Theorem}[section]
\newtheorem{lemma}[theorem]{Lemma}

\newtheorem{proposition}[theorem]{Proposition} 
\newtheorem{remark}[theorem]{Remark} 
\newtheorem{corollary}[theorem]{Corollary}


\newcommand{\dsum}{\displaystyle\sum}
\newcommand{\dint}{\displaystyle\int}
\newcommand{\eps}{\varepsilon}



\newcommand{\sfc}{\mathsf c}
\newcommand{\sfC}{\mathsf C}


\newcommand{\bbR}{\mathbb R} \newcommand{\bbS}{\mathbb S}


\newcommand{\bPi}{\boldsymbol \Pi}

 \newcommand{\bn}{\mathbf n}

 \newcommand{\bv}{\mathbf v} 
 \newcommand{\bx}{\mathbf x} 
 \newcommand{\bz}{\mathbf z}

\newcommand{\bI}{\mathbf I}

\newcommand{\cA}{\mathcal A}

 \newcommand{\cH}{\mathcal H}
\newcommand{\cI}{\mathcal I} 
\newcommand{\cK}{\mathcal K} 
 \newcommand{\cN}{\mathcal N}




\setlength\textwidth{39pc}
\setlength\textheight{54pc}
\setlength\hoffset{0pc}
\setlength\voffset{0pc}
\setlength\topmargin{0pt}
\setlength\headheight{0pt}
\setlength\headsep{0pt}
\setlength\oddsidemargin{0in}
\setlength\evensidemargin{0in}


\newenvironment{keywords}
{\noindent{\bf Key words.}\small}{\par\vspace{1ex}}





\title{Inverse transport problems in quantitative PAT for molecular imaging}

\author{
	Kui Ren\thanks{
		Department of Mathematics and ICES,
		University of Texas, 
		Austin, TX 78712; ren@math.utexas.edu .
	}
	\and 
	Rongting Zhang\thanks{
		Department of Mathematics, University of Texas, Austin, TX 78712; rzhang@math.utexas.edu .
	}
	\and 
	Yimin Zhong\thanks{
		Department of Mathematics, University of Texas, Austin, TX 78712; yzhong@math.utexas.edu .
	}
}
    
\date{}

\begin{document}

\maketitle



\begin{abstract}
Fluorescence photoacoustic tomography (fPAT) is a molecular imaging modality that combines photoacoustic tomography (PAT) with fluorescence imaging to obtain high-resolution imaging of fluorescence distributions inside heterogeneous media. The objective of this work is to study inverse problems in the quantitative step of fPAT where we intend to reconstruct physical coefficients in a coupled system of radiative transport equations using internal data recovered from ultrasound measurements. We derive uniqueness and stability results on the inverse problems and develop some efficient algorithms for image reconstructions. Numerical simulations based on synthetic data are presented to validate the theoretical analysis. The results we present here complement these in~\cite{ReZh-SIAM13} on the same problem but in the diffusive regime.
\end{abstract}


\begin{keywords}
	Photoacoustic tomography (PAT), molecular imaging, fluorescence optical tomography, fluorescence PAT (fPAT), radiative transport equation, hybrid inverse problems, numerical reconstruction
\end{keywords}




\section{Introduction}
\label{SEC:Intro}

Photoacoustic tomography (PAT)~\cite{Bal-IO12,Beard-IF11,CoLaBe-SPIE09,Kuchment-MLLE12,KuKu-HMMI10,LiWa-PMB09,PaSc-IP07,Scherzer-Book10,Wang-Book09,Wang-Scholarpedia14,Wang-PIER14} is a recent hybrid imaging modality that attempts to reconstruct high-resolution images of optical properties of heterogeneous media. In a PAT experiment, we send a short pulse of near-infra-red (NIR) photons into an optically heterogeneous medium. The photons travel inside the medium following a radiative transport process. The medium absorbs a portion of the photons during their propagation process. The absorbed photons lead to the heating of the medium which then results in a local temperature rise. The medium expanses due to
the temperature rise and then contracts when the rest of the photons leave the medium and the temperature drops accordingly. The expansion and contraction of the medium induces pressure changes which then propagate in the form of ultrasound waves. We then measure the ultrasound signals on the surface of the medium and from these measurements we intend to infer as much knowledge as possible on the optical properties, for instance the optical absorption and scattering coefficients, of the medium.

In recent years, there are great interests in developing PAT for biomedical molecular imaging~\cite{BuGrSo-NP09,RaDiViMaPeKoNt-NP09,RaNt-MP07,WaZhBaMoJi-MP12,WaMaKiHuWa-IEEE10,Wang-PIER14,WiWaWi-JNM13}. The main objective here is to visualize particular cellular functions and molecular processes inside biological tissues by using target-specific exogenous contrasts. To be specific, we consider in this work quantitative PAT for fluorescence optical imaging where one aims to image distribution of fluorescent biochemical markers inside heterogeneous media. In a typical imaging process, we first inject fluorescent markers into the medium to be probed. The markers will travel inside the medium and accumulate on their targets, for instance cancerous tissues inside the normal tissue. We then send a short pulse of NIR photons at wavelength $\lambda_x$ to the medium to excite the fluorescent markers who then emit NIR photons at a different wavelength $\lambda_m$. The absorption of both the excitation and the emission photons by the medium will then generate ultrasound waves inside the medium following the photoacoustic effect just as in a regular PAT process, assuming that fluorescence takes place instantaneously as excitation light pulse is absorbed~\cite{ReZh-SIAM13}. We then measure the ultrasound signals on the surface of the medium and attempt to recover information associated with the biochemical markers.

The density distributions for the external light source and the fluorescent light in the tissues are both described by the radiative transport equation. Let $\Omega\subset\bbR^d$ $(d\ge 2)$ be the domain of interests and $\bbS^{d-1}$ be the unit sphere in $\bbR^d$. We denote by $X=\Omega\times\bbS^{d-1}$ the phase space and $\Gamma_\pm=\{(\bx,\bv)\in\partial\Omega\times\bbS^{d-1}|\pm\bn(\bx)\cdot\bv> 0\}$ its boundary sets. We denote by $u_x(\bx,\bv)$ and $u_m(\bx,\bv)$ the density of photons at the excitation and emission wavelengths respectively, at location $\bx$, traveling in direction $\bv\in\bbS^{d-1}$. Then $u_x(\bx,\bv)$ and $u_m(\bx,\bv)$ solve the following coupled system of radiative transport equations
\begin{equation}\label{EQ:ERT QfPAT}
	\begin{array}{rcll}
	\bv\cdot\nabla u_x+(\sigma_{a,x}+\sigma_{s,x})u_x&=& \sigma_{s,x} K_\Theta(u_x),
  	&\mbox{in}\ \ X\\
  	\bv\cdot\nabla u_m+(\sigma_{a,m}+\sigma_{s,m})u_m&=& \sigma_{s,m} K_\Theta(u_m)+\eta\sigma_{a,xf}(\bx)K_I(u_x)(\bx),
  	&\mbox{in}\ \ X\\
    u_x(\bx,\bv) = g_x(\bx,\bv),&&  
    u_m(\bx,\bv) = 0, & \mbox{on}\ \ \Gamma_-
	\end{array}	
\end{equation}
where the subscripts $x$ and $m$ denote the quantities at the excitation and the emission wavelengths, respectively. The coefficients $\sigma_{a,x}$ and $\sigma_{s,x}$ (resp. $\sigma_{a,m}$ and $\sigma_{s,m}$) are respectively the absorption and scattering coefficients at wavelength $\lambda_x$ (resp. $\lambda_m$). The scattering operator $K_\Theta$ and the averaging operator $K_I$ are defined respectively as
\begin{equation}
	K_\Theta(u_x)(\bx,\bv)=\int_{\bbS^{d-1}}\Theta(\bv,\bv') u_x(\bx,\bv') d\bv',\quad\mbox{and},\quad 
	K_I(u_x)(\bx,\bv)=\int_{\bbS^{d-1}} u_x(\bx,\bv') d\bv',
\end{equation}
with the scattering kernel $\Theta(\bv,\bv')$ describing the probability that a photon traveling in direction $\bv'$ gets scattered into direction $\bv$. 

The total absorption coefficient $\sigma_{a,x}$ consists of a contribution $\sigma_{a,xi}$ from the intrinsic tissue chromophores and a contribution $\sigma_{a,xf}$ from the fluorophores of the biochemical markers: $\sigma_{a,x}=\sigma_{a,xi}+\sigma_{a,xf}$. The absorption coefficient due to fluorophores, $\sigma_{a,xf}$ is proportional to the concentration $\rho(\bx)$ and the extinction coefficient $\eps(\bx)$ of the fluorophores, i.e. $\sigma_{a,xf}=\eps(\bx)\rho(\bx)$. The coefficient $\eta(\bx)$ is the quantum efficiency of the fluorophores. The coefficients $\eta$ and $\sigma_{a,xf}$ are the main quantities associated with the biochemical markers.

The energy absorbed by the medium and the markers consists of two parts. The first part is from the excitation photons. This part can be written as $\sigma_{a,x} K_I(u_x)$.  The second part of absorbed energy comes from emission photons. This part can be written as $\sigma_{a,m} K_I(u_m)$. Therefore, the pressure field generated by the photoacoustic effect can therefore be written as:
\begin{eqnarray}
\nonumber H(\bx)&=&\Xi(\bx)\Big[\big(\sigma_{a,x}(\bx)-\eta(\bx)\sigma_{a,xf}(\bx)\big) K_I(u_x)(\bx)+\sigma_{a,m}(\bx)K_I(u_m)(\bx) \Big],\\
\label{EQ:Data QfPAT}
	&\equiv& \Xi(\bx)\Big(\sigma_{a,x}^\eta K_I(u_x)(\bx)+\sigma_{a,m}(\bx)K_I(u_m)(\bx) \Big),
\end{eqnarray}
where $\Xi$ is the (\emph{nondimensional}) Gr\"uneisen coefficient that measures the photoacoustic efficiency of the underlying medium, and $\sigma_{a,x}^\eta$ is the short notation for $\sigma_{a,xi}+(1-\eta)\sigma_{a,xf}$. We want to emphasize that when calculating the initial pressure field generated, we have subtract a portion of the energy, $\eta \sigma_{a,xf} K_I(u_x)$, from the total energy absorbed by the medium and the markers. This is because that portion of energy is used to generate fluorescence, not the heating in the photoacoustic process.

The initial pressure field  generated from the photoacoustic effect, $H$, evolves in space and time following the acoustic wave equation~\cite{BaUh-IP10,FiScSc-PRE07,StUh-IP09}:
\begin{equation}\label{EQ:Acous}
	\begin{array}{cl}
  	\dfrac{1}{c^2(\bx)}\dfrac{\partial ^2 p}{\partial t^2}-\Delta p = 0, & \mbox{in}\ \ \bbR_+\times \bbR^d\\
	p(0,\bx)= H, \quad \dfrac{\partial p}{\partial t}(0,\bx) =0, &\mbox{in}\ \ \bbR^d
	\end{array}
\end{equation}
where $c(\bx)$ is the speed of the ultrasound in the medium. The data that we measure are the solutions to the wave equation~\eqref{EQ:Acous} on the surface of the medium, $p_{|(0, t_{\rm max})\times\partial\Omega}$, $t_{\rm max}$ being large enough, for various excitation light sources.

Following~\cite{ReZh-SIAM13}, we call the process of reconstructing information on $\eta$ and $\sigma_{a,xf}$ from datum $p_{|(0, t_{max})\times\partial\Omega}$ fluorescence PAT (fPAT). This is a molecular imaging modality that combines PAT with fluorescence optical imaging. We refer interested readers to~\cite{ReZh-SIAM13} for more discussions on the mathematical modeling of fPAT, including detailed derivation and justification the models~\eqref{EQ:ERT QfPAT} (in diffusive regime) and ~\eqref{EQ:Acous}, and to~\cite{BuGrSo-NP09,RaDiViMaPeKoNt-NP09,RaNt-MP07,WaZhBaMoJi-MP12,WaMaKiHuWa-IEEE10} for some experimental and computational results on fPAT. Recent progress on fluorescence optical imaging itself can be found in~\cite{AlMeMo-OE09,AmGaGi-QAM12,GoSeEp-MP05,KuRaDuBaBo-IEEE08,PaPo-AO94,SoTaMcElNeFrAr-AO07} and references therein.

Image reconstruction in fPAT is a two-step process as in regular PAT. In the first step, we reconstruct $H$ from measured acoustic data. We assume here that this step has been finished with methods such as those in~\cite{AgKuKu-PIS09,AmBrJuWa-LNM12,BuMaHaPa-PRE07,CoArBe-IP07,FiHaRa-SIAM07,Haltmeier-SIAM11B,HaScSc-M2AS05,Hristova-IP09,KiSc-SIAM13,KuKu-EJAM08,QiStUhZh-SIAM11,StUh-IP09} and we are given the internal datum~\eqref{EQ:Data QfPAT}. Moreover, we assume that: ({\bf A-i}) the Gr\"uneisen coefficient $\Xi$ as well as the absorption and scattering coefficients of the medium at the excitation wavelength, $\sigma_{a,xi}$ and $\sigma_{s,x}$, have been known from other imaging technologies (for instance a multi-spectral quantitative PAT step~\cite{BaRe-IP12,MaRe-CMS14}) before the fluorescent biochemical markers are injected into the medium; and ({\bf A-ii}) the absorption and scattering coefficients at the emission wavelength, $\sigma_{a,m}$ and $\sigma_{s,m}$, are also reconstructed by other imaging methods (for instance a regular quantitative PAT technique~\cite{AmBoJuKa-SIAM10,BaJoJu-IP10,BaRe-IP11,BaRe-IP12,BaUh-IP10,CoArKoBe-AO06,CoTaAr-CM11,GaOsZh-LNM12,MaRe-CMS14,PuCoArKaTa-IP14,ReGaZh-SIAM13,SaTaCoAr-IP13,Zemp-AO10} after the Gr\"uneisen coefficient is known). Therefore, our main objective is only to reconstruct the quantum efficiency $\eta$ and the fluorescence absorption coefficient $\sigma_{a,xf}(\bx)$ in the system~\eqref{EQ:ERT QfPAT} from the datum $H$ in~\eqref{EQ:Data QfPAT}. This is the quantitative fPAT (QfPAT) problem.

Let us now remark on a couple of issues regarding the practical relevance of the current work. First of all, in many practical applications, it is preferable to use contrast agents that do not emit photons after absorbing incoming excitation photons. In other words, the biochemical markers have quantum efficiency $\eta=0$. In this case, the second equation in~\eqref{EQ:ERT QfPAT} drops out of the transport system, and the terms involve $\eta$ and $u_m$ all drop out from the datum~\eqref{EQ:Data QfPAT}. We are therefore back to the same mathematical problem as in a regular quantitative PAT process. The theory of the reconstruction in this case is covered in Theorem~\ref{THM:Sigma} of our results. Our results in this paper are in fact more general in the sense that we can deal with the general case of non-negligible quantum efficiency, that is $\eta > 0$. When $\eta > 0$, we have to take into account the impact of the emitted fluorescence photons in the reconstruction process. Neglecting this impact in the model would certainly introduce errors in the images reconstructed. The second issue we need to address is the difference between the work we have here and the theory on the same problem that have been developed in the diffusive regime~\cite{ReZh-SIAM13}. It is generally believed that the radiative transport equation is a more accurate model than the diffusion equation to describe the propagation of NIR photons in biological tissues~\cite{Arridge-IP99,ReBaHi-AO07}, even though it is more complicated to theoretically analyze and numerically solve. Our analysis in this paper is useful when the diffusion approximation to the radiative transport equation breaks down, for instance in media of small volumes but large mean free paths. Optical imaging of small animals~\cite{Hielscher-COB05}, for instance, is one of such biomedical applications for our work here.

The rest of the paper is organized as follows. We first present in Section~\ref{SEC:Gen} some general properties of the inverse problem, especially the continuous dependence of the datum $H$ on the unknown coefficients. We then consider in Section~\ref{SEC:One Coeff} the reconstruction of a single coefficient from a single internal data set. We derive some uniqueness and stability results on the reconstruction. In Section~\ref{SEC:Two Coeff} we study the problem of reconstructing two coefficients simultaneously, mainly in linearized settings. We then present some numerical simulations based on synthetic data in Section~\ref{SEC:Num} to validate the theory and the reconstruction algorithms we developed. Concluding remarks are offered in Section~\ref{SEC:Concl}.

\section{General Properties of the Inverse Problems}
\label{SEC:Gen}

We review in this section some general properties of the inverse problem of reconstructing $\eta(\bx)$ and/or $\sigma_{a,xf}(\bx)$ in the transport system~\eqref{EQ:ERT QfPAT} from the datum $H$ in~\eqref{EQ:Data QfPAT}. We denote by $L^p(X)$ (resp. $L^p(\Omega)$) the Lebesgue space of real-valued functions whose $p$-th power are Lebesgue integrable on $X$ (resp. $\Omega$), and $\cH_p^1(X)$ the space of $L^p(X)$ functions whose derivative in direction $\bv$ is in $L^p(X)$, i.e. $\cH_p^1(X)=\{f(\bx,\bv): f\in L^p(X)\ \mbox{and}\ \bv\cdot\nabla f\in L^p(X)\}$. We denote by $L^p(\Gamma_-)$ the space of functions that are traces of $\cH_p^1(X)$ functions on $\Gamma_-$ under the norm $\|f\|_{L^p(\Gamma_-)}=(\int_{\partial\Omega}\int_{\bbS_{\bx-}^{d-1}}|\bn(\bx)\cdot\bv||f|^p d\bv d\gamma)^{1/p}$, $d\gamma$ being the surface measure on $\partial\Omega$ and $\bbS_{\bx-}^{d-1}=\{\bv: \bv\in\bbS^{d-1}\ \mbox{s.t.}\ -\bn(\bx)\cdot\bv>0\}$. It is well-known~\cite{Agoshkov-Book98, DaLi-Book93-6} that both $\cH_p^1(X)$ and $L^p(\Gamma_-)$ are well-defined. To avoid confusion with $\cH_p^1(X)$, we use $W_2^k(\Omega)$ to denote the usual Hilbert space of $L^2(\Omega)$ functions whose partial derivatives up to order $k$ are all in $L^2(\Omega)$. Besides the assumptions in ({\bf A-i})-({\bf A-ii}), we assume further that:\vskip 2mm

\noindent ({\bf A-iii}) The domain $\Omega$ is simply-connected with $C^2$ boundary $\partial \Omega$. The known optical coefficients satisfy $0<\sfc_1 \le \sigma_{a,xi}, \sigma_{s,x}, \sigma_{a,m},\sigma_{s,m}, \Xi \le \sfc_2<+\infty$ for some positive constants $\sfc_1$ and $\sfc_2$. The unknown coefficients, $(\eta,\sigma_{a,xf})$ belongs to the class
\begin{equation}\label{EQ:Bound Coeff}
	\cA=\{(\eta,\sigma_{a,xf}): 0<\sfc_3\le \eta \le \sfc_4 <1,\ 0<\sfc_5 \le \sigma_{a,xf} \le \sfc_6<+\infty\}
\end{equation}
for some positive constants $\sfc_3$, $\sfc_4$, $\sfc_5$ and $\sfc_6$. The scattering kernel $\Theta$ is symmetric, bounded and normalized in the sense that
\begin{equation}\label{EQ:Theta Properties}
\begin{array}{c}
\Theta(\bv,\bv')=\Theta(\bv',\bv),\quad 0<\sfc_7\le \Theta(\bv,\bv')\le \sfc_8<+\infty,\ \ \forall \bv,\bv'\in\bbS^{d-1},\\
\dint_{\bbS^{d-1}}\Theta(\bv,\bv') d\bv'=\dint_{\bbS^{d-1}}\Theta(\bv',\bv) d\bv'=1,\ \ \forall \bv\in\bbS^{d-1},
\end{array}
\end{equation}
for some positive constants $\sfc_7$ and $\sfc_8$. The illumination $g_x(\bx,\bv)$ is strictly positive such that $0<\sfc_9\le g_x(\bx,\bv)$ for some $\sfc_9$.\vskip 2mm

With the above settings, it is easy to see, following standard results in~\cite{Agoshkov-Book98,DaLi-Book93-6}, that the system~\eqref{EQ:ERT QfPAT} admits a unique solution in the following sense.
\begin{lemma}\label{LMMA:ERT Stab}
Let $p\in[1,\infty]$ and assume that ({\bf A-iii}) holds. Then for any given function $g_x(\bx,\bv)\in L^p(\Gamma_-)$, there exists a unique solution $(u_x, u_m)\in \cH_p^1(X)\times \cH_p^1(X)$ to the couple transport system~\eqref{EQ:ERT QfPAT}. Moreover, the following bound holds:
\begin{equation}\label{EQ:ERT QfPAT Stab}
	\|u_x\|_{L^p(X)}+\|u_m\|_{L^p(X)}\le \sfc\|g_x\|_{L^p(\Gamma_-)}
\end{equation}
with the constant $\sfc$ depending only on $\Omega$ and the bounds for the coefficients in assumption ({\bf A-iii}).
\end{lemma}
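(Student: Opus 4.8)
The plan is to exploit the triangular structure of the coupled system~\eqref{EQ:ERT QfPAT}: the equation for $u_x$ does not involve $u_m$, so one may first solve for $u_x$ and then treat the $u_m$-equation as a single transport equation with a known volume source.

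First I would observe that the scalar problem
\[
	\bv\cdot\nabla u_x+(\sigma_{a,x}+\sigma_{s,x})u_x = \sigma_{s,x} K_\Theta(u_x)\ \mbox{in}\ X,\qquad u_x=g_x\ \mbox{on}\ \Gamma_-,
\]
falls exactly within the classical well-posedness theory for the radiative transport equation~\cite{Agoshkov-Book98,DaLi-Book93-6}. Under~({\bf A-iii}) the total cross-section $\sigma_{a,x}+\sigma_{s,x}$ is bounded above and below by positive constants, the scattering kernel is bounded and conservative ($\int_{\bbS^{d-1}}\Theta\,d\bv'=1$), and crucially the absorption $\sigma_{a,x}=\sigma_{a,xi}+\sigma_{a,xf}\ge \sfc_1+\sfc_5>0$ is bounded below; this strict positivity of the absorption (not merely of the total cross-section) makes the transport operator $\bv\cdot\nabla+(\sigma_{a,x}+\sigma_{s,x})-\sigma_{s,x}K_\Theta$ boundedly invertible from $L^p(\Gamma_-)$-traces into $\cH_p^1(X)$ for every $p\in[1,\infty]$, via a Neumann-series/resolvent argument for $p<\infty$ and a comparison (maximum-principle) argument for $p=\infty$. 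This produces a unique $u_x\in\cH_p^1(X)$ with $\|u_x\|_{L^p(X)}\le \sfc\,\|g_x\|_{L^p(\Gamma_-)}$, the constant depending only on $\Omega$ and the bounds in~({\bf A-iii}).

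Next I would record the mapping property of the averaging operator: since $\bbS^{d-1}$ has finite measure, Jensen's (or Hölder's) inequality gives $\|K_I(u_x)\|_{L^p(X)}\le |\bbS^{d-1}|\,\|u_x\|_{L^p(X)}$ for all $p\in[1,\infty]$. Using in addition $0<\eta\sigma_{a,xf}\le \sfc_4\sfc_6$, the source $f_m:=\eta\sigma_{a,xf}K_I(u_x)$ is a function of $\bx$ alone lying in $L^p(X)$ with $\|f_m\|_{L^p(X)}\le \sfc\,\|u_x\|_{L^p(X)}$. The second equation,
\[
	\bv\cdot\nabla u_m+(\sigma_{a,m}+\sigma_{s,m})u_m = \sigma_{s,m}K_\Theta(u_m)+f_m\ \mbox{in}\ X,\qquad u_m=0\ \mbox{on}\ \Gamma_-,
\]
is then again a standard transport equation; invoking the same theory with vanishing inflow data and an $L^p(X)$ volume source yields a unique $u_m\in\cH_p^1(X)$ with $\|u_m\|_{L^p(X)}\le\sfc\,\|f_m\|_{L^p(X)}$. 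Chaining the three estimates gives~\eqref{EQ:ERT QfPAT Stab} with the final constant depending only on $\Omega$ and the coefficient bounds, and uniqueness of the pair follows from uniqueness at each of the two stages.

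I do not expect a serious obstacle, as the argument is essentially a bookkeeping reduction to classical linear transport theory. The only points requiring a little care are: (i) checking that the constants at each stage are controlled purely by $\Omega$ and the bounds in~({\bf A-iii}) — in particular, that it is the strict positivity of the absorption that furnishes the a priori bound, since with conservative scattering and no absorption the solution operator need not be bounded; and (ii) treating the endpoint $p=\infty$, where the resolvent/Neumann-series estimate is replaced by a comparison-principle argument and the bound $\|K_I(u_x)\|_{L^\infty(X)}\le|\bbS^{d-1}|\,\|u_x\|_{L^\infty(X)}$ replaces the Hölder estimate above.
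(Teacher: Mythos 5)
Your proposal is correct and follows essentially the same route as the paper: both exploit the triangular structure, invoke the standard $L^p$ well-posedness theory of \cite{Agoshkov-Book98,DaLi-Book93-6} for the $u_x$-equation with inflow data $g_x$, then treat the $u_m$-equation as a transport problem with vacuum boundary condition and source $\eta\sigma_{a,xf}K_I(u_x)$, and chain the two estimates. The paper simply cites the classical theory where you sketch the Neumann-series and $p=\infty$ comparison arguments; no substantive difference.
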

\begin{proof}
When the assumptions are satisfied, it follows directly from standard transport theory in~\cite{Agoshkov-Book98,DaLi-Book93-6} that the first transport equation admits a unique solution $u_x\in \cH_p^1(X)$ such that $\|u_x\|_{L^p(X)} \le \tilde\sfc \|g_x\|_{L^p(\Gamma_-)}$. We then deduce, with the same argument that the second equation admit a unique solution $u_m\in \cH_p^1(X)$ such that $\|u_m\|_{L^p(X)} \le \hat\sfc \|\eta\sigma_{a,xf} K_I(u_x)\|_{L^p(\Omega)}\le \hat{\hat\sfc}\|u_x\|_{L^p(X)}$. The bound in ~\eqref{EQ:ERT QfPAT Stab} then follows from selecting $\sfc=\tilde\sfc(1+\hat{\hat\sfc})$.
\end{proof}

The above lemma ensures that the datum $H$ in~\eqref{EQ:Data QfPAT} is well-defined for any $g_x(\bx,\bv)\in L^p(\Gamma_-)$ ($p\in[1,\infty]$) that satisfies the assumptions in ({\bf A-iii}). Moreover $H\in L^p(\Omega)$ following standard results in~\cite{DaLi-Book93-6}. The next result shows that the datum $H$ depends continuously on the unknown coefficients and is differentiable with respect to the coefficients in appropriate sense.
\begin{proposition}\label{PROP:Data QfPAT Frechet}
	Let $p\in[1,\infty]$ and assume that ({\bf A-iii}) holds. Then for any given function $g_x(\bx,\bv)\in L^p(\Gamma_-)$, the datum $H$ defined in~\eqref{EQ:Data QfPAT}, viewed as the map
	\begin{equation}
	H[\eta,\sigma_{a,xf}]:
	\begin{array}{ccl}
		(\eta,\sigma_{a,xf}) &\mapsto& \Xi\big(\sigma_{a,x}^\eta K_I(u_{x})+\sigma_{a,m} K_I(u_{m})\big)\\
		L^{\infty}(\Omega)\times L^{\infty}(\Omega) &\mapsto & L^p(\Omega)
	\end{array}
	\end{equation}
	is Fr\'echet differentiable at any $(\eta,\sigma_{a,xf})\in L^{\infty}(\Omega)\times L^{\infty}(\Omega)$ in the direction $(\delta\eta,\delta\sigma_{a,xf})\in L^\infty(\Omega)\times L^\infty(\Omega)$ that satisfy $(\eta,\sigma_{a,xf})\in\cA$ and $(\eta+\delta\eta,\sigma_{a,xf}+\delta\sigma_{a,xf})\in\cA$. The derivative is given by
	\begin{equation}\label{EQ:Data QfPAT Linearized}
	H'[\eta,\sigma_{a,xf}](\delta\eta,\delta\sigma_{a,xf}) = \Xi \Big((-\delta\eta\sigma_{a,xf}+(1-\eta)\delta\sigma_{a,xf}) K_I(u_{x}) +\sigma_{a,x}^\eta K_I(v_x)+\sigma_{a,m} K_I(v_{m})\Big)
	\end{equation}
where $(v_x,v_m)\in \cH_p^1(X)\times\cH_p^1(X)$ is the unique solution to
\begin{equation}\label{EQ:ERT QfPAT Linearized}
	\begin{array}{rcll}
  	\bv\cdot\nabla v_x+  \sigma_{t,x} v_x & = & \sigma_{s,x} K_\Theta(v_x)-\delta\sigma_{a,xf} u_x, &\mbox{in}\ \ X\\
  	\bv\cdot\nabla v_m+ \sigma_{t,m} v_m & = & \sigma_{s,m} K_\Theta(v_m) +\eta\sigma_{a,xf}K_I(v_x)+ (\eta\delta\sigma_{a,xf} +\delta \eta\sigma_{a,xf}) K_I(u_x), &\mbox{in}\ \ X\\
    v_x(\bx,\bv) = 0,&& v_m(\bx,\bv) = 0 & \mbox{on}\ \ \Gamma_-
	\end{array}
\end{equation}
where $\sigma_{t,x}=\sigma_{a,x}+\sigma_{s,x}$ and $\sigma_{t,m}=\sigma_{a,m}+\sigma_{s,m}$. 
\end{proposition}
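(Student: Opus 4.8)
The plan is to linearize the forward map by hand. Fix $(\eta,\sigma_{a,xf})\in\cA$ and an admissible direction $(\delta\eta,\delta\sigma_{a,xf})\in L^\infty(\Omega)\times L^\infty(\Omega)$ with $(\eta+\delta\eta,\sigma_{a,xf}+\delta\sigma_{a,xf})\in\cA$, and let $(\tilde u_x,\tilde u_m)$ denote the solution of~\eqref{EQ:ERT QfPAT} with the perturbed coefficients; by Lemma~\ref{LMMA:ERT Stab} it lies in $\cH_p^1(X)\times\cH_p^1(X)$ with a norm bounded uniformly over $\cA$. Put $w_x=\tilde u_x-u_x$ and $w_m=\tilde u_m-u_m$. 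Subtracting the $x$-equations of the two systems, and noting that the $\delta\sigma_{a,xf}$ correction to the total cross section lands entirely in the source, one gets
\[
\bv\cdot\nabla w_x+\sigma_{t,x}w_x-\sigma_{s,x}K_\Theta(w_x)=-\delta\sigma_{a,xf}\,\tilde u_x\ \ \text{in }X,\qquad w_x=0\ \ \text{on }\Gamma_-,
\]
so the transport stability estimate underlying Lemma~\ref{LMMA:ERT Stab}, together with $\norm{K_I(f)}_{L^p(\Omega)}\le\sfc\norm{f}_{L^p(X)}$, yields $\norm{w_x}_{L^p(X)}\le\sfc\norm{\delta\sigma_{a,xf}}_{L^\infty(\Omega)}\norm{\tilde u_x}_{L^p(X)}$. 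An analogous subtraction of the $m$-equations, after expanding $(\eta+\delta\eta)(\sigma_{a,xf}+\delta\sigma_{a,xf})K_I(\tilde u_x)-\eta\sigma_{a,xf}K_I(u_x)$, gives likewise $\norm{w_m}_{L^p(X)}\le\sfc(\norm{\delta\eta}_{L^\infty(\Omega)}+\norm{\delta\sigma_{a,xf}}_{L^\infty(\Omega)})$. Thus both increments are $O(\norm{(\delta\eta,\delta\sigma_{a,xf})})$.

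Next I would extract the leading-order parts of $w_x$ and $w_m$. Let $(v_x,v_m)$ solve the linearized system~\eqref{EQ:ERT QfPAT Linearized}, which is well-posed and satisfies $\norm{v_x}_{L^p(X)}+\norm{v_m}_{L^p(X)}=O(\norm{(\delta\eta,\delta\sigma_{a,xf})})$ by the same transport estimate applied to~\eqref{EQ:ERT QfPAT Linearized}. Writing $w_x=v_x+r_x$ and using $\tilde u_x=u_x+w_x$, subtraction gives
\[
\bv\cdot\nabla r_x+\sigma_{t,x}r_x-\sigma_{s,x}K_\Theta(r_x)=-\delta\sigma_{a,xf}\,w_x\ \ \text{in }X,\qquad r_x=0\ \ \text{on }\Gamma_-,
\]
hence $\norm{r_x}_{L^p(X)}\le\sfc\norm{\delta\sigma_{a,xf}}_{L^\infty(\Omega)}\norm{w_x}_{L^p(X)}=O(\norm{(\delta\eta,\delta\sigma_{a,xf})}^2)$. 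Writing $w_m=v_m+r_m$ and performing the analogous subtraction and expansion, one finds
\[
\bv\cdot\nabla r_m+\sigma_{t,m}r_m-\sigma_{s,m}K_\Theta(r_m)=\eta\sigma_{a,xf}K_I(r_x)+(\eta\,\delta\sigma_{a,xf}+\delta\eta\,\sigma_{a,xf})K_I(w_x)+\delta\eta\,\delta\sigma_{a,xf}\,K_I(\tilde u_x)
\]
in $X$ with $r_m=0$ on $\Gamma_-$; each of the three source terms has $L^p(\Omega)$-norm of order $\norm{(\delta\eta,\delta\sigma_{a,xf})}^2$ by the bounds above, so $\norm{r_m}_{L^p(X)}=O(\norm{(\delta\eta,\delta\sigma_{a,xf})}^2)$ as well.

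Finally, I would substitute these expansions into~\eqref{EQ:Data QfPAT}. Using the algebraic identity $\sigma_{a,xi}+(1-\eta-\delta\eta)(\sigma_{a,xf}+\delta\sigma_{a,xf})=\sigma_{a,x}^\eta+((1-\eta)\delta\sigma_{a,xf}-\delta\eta\,\sigma_{a,xf})-\delta\eta\,\delta\sigma_{a,xf}$ together with $K_I(\tilde u_x)=K_I(u_x)+K_I(v_x)+K_I(r_x)$ and $K_I(\tilde u_m)=K_I(u_m)+K_I(v_m)+K_I(r_m)$, the difference $H[\eta+\delta\eta,\sigma_{a,xf}+\delta\sigma_{a,xf}]-H[\eta,\sigma_{a,xf}]$ equals the right-hand side of~\eqref{EQ:Data QfPAT Linearized} plus a remainder which is $\Xi$ times a finite sum of terms, each of which is $O(\norm{(\delta\eta,\delta\sigma_{a,xf})}^2)$ in $L^p(\Omega)$ by the bounds just established (invoking $0<\Xi\le\sfc_2$, $\norm{K_I(f)}_{L^p(\Omega)}\le\sfc\norm{f}_{L^p(X)}$, and the uniform coefficient bounds of~({\bf A-iii}) and~\eqref{EQ:Bound Coeff}). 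Hence that remainder is $o(\norm{(\delta\eta,\delta\sigma_{a,xf})}_{L^\infty(\Omega)\times L^\infty(\Omega)})$ in $L^p(\Omega)$, and since the map $(\delta\eta,\delta\sigma_{a,xf})\mapsto H'[\eta,\sigma_{a,xf}](\delta\eta,\delta\sigma_{a,xf})$ given by~\eqref{EQ:Data QfPAT Linearized} is plainly linear and bounded from $L^\infty(\Omega)\times L^\infty(\Omega)$ into $L^p(\Omega)$ (again by well-posedness of~\eqref{EQ:ERT QfPAT Linearized} and the coefficient bounds), this establishes Fr\'echet differentiability with the stated derivative. There is no deep obstacle here; the points that need care are that every transport estimate be invoked with a constant uniform over $\cA$ — which is precisely what the fixed constants in~({\bf A-iii}) and~\eqref{EQ:Bound Coeff} guarantee — and that one faithfully track all the cross terms produced by the bilinear source $\eta\sigma_{a,xf}K_I(u_x)$ in the emission equation.
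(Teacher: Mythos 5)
Your proposal is correct and follows essentially the same route as the paper's own proof: you form the first-order differences $w_x=\tilde u_x-u_x$, $w_m=\tilde u_m-u_m$ (the paper's $u_x',u_m'$), then the residuals $r_x=w_x-v_x$, $r_m=w_m-v_m$ (the paper's $u_x'',u_m''$), derive the same transport systems for each (your source for $r_m$ is just an equivalent regrouping of the paper's $G$), apply the uniform transport stability estimates to get first- and second-order bounds, and finish with the same algebraic expansion of $\sigma_{a,x}^{\tilde\eta}$ in the datum. No gaps; the only differences are notational.
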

\begin{proof}
Let $\tilde\eta=\eta+\delta\eta$, $\tilde\sigma_{a,xf}=\sigma_{a,xf}+\delta\sigma_{a,xf}$, and define $\Delta(\eta\sigma_{a,xf})=\tilde\eta\tilde\sigma_{a,xf}-\eta\sigma_{a,xf}$. We denote by $(\tilde u_x, \tilde u_m)$ the solution to~\eqref{EQ:ERT QfPAT} with the coefficients $(\tilde\eta, \tilde\sigma_{a,xf})$, and $\tilde H$ the corresponding datum. It is straightforward to verify that $(u_x',u_m')\equiv(\tilde u_x-u_x,\tilde u_m-u_m)$ solves the following system of transport equations
\begin{equation}\label{EQ:ERT Dif}
	\begin{array}{rcll}
  	\bv\cdot\nabla u_x'+ \sigma_{t,x} u_x' & = & \sigma_{s,x} K_\Theta(u_x')-\delta\sigma_{a,xf} \tilde u_x, &\mbox{in}\ \ X\\
  	\bv\cdot\nabla u_m'+ \sigma_{t,m} u_m' & = & \sigma_{s,m} K_\Theta(u_m') +\eta\sigma_{a,xf} K_I(u_x')+F(\bx), & \mbox{in}\ \ X\\
    u_x'(\bx,\bv) = 0,&& u_m'(\bx,\bv) = 0 & \mbox{on}\ \ \Gamma_-
	\end{array}
\end{equation}
with $F(\bx)=\Delta(\eta\sigma_{a,x})K_I(\tilde u_x)$, and $(u_x'',u_m'')\equiv(u_x'-v_x, u_m'-v_m)$ solves the following system
\begin{equation}\label{EQ:ERT Dif2}
	\begin{array}{rcll}
  	\bv\cdot\nabla u_x''+ \sigma_{t,x} u_x'' & = & \sigma_{s,x} K_\Theta(u_x'')-\delta\sigma_{a,xf}u_x', &\mbox{in}\ \ X\\
  	\bv\cdot\nabla u_m''+ \sigma_{t,m} u_m'' & = & \sigma_{s,m} K_\Theta(u_m'')+\eta\sigma_{a,xf} K_I(u_x'')+G(\bx), &\mbox{in}\ \ X\\
    u_x''(\bx,\bv) = 0,&& u_m''(\bx,\bv) = 0 & \mbox{on}\ \ \Gamma_-,
	\end{array}
\end{equation}
with $G(\bx)=\Delta(\eta\sigma_{a,xf}) K_I(u_x')+\delta\eta\delta\sigma_{a,xf}K_I(u_x)$.

With the assumptions on the coefficients and the illumination source $g_x$, we conclude that $(u_x, u_m)\in \cH_p^1(X)\times \cH_p^1(X)$ and $(\tilde u_x, \tilde u_m)\in \cH_p^1(X)\times \cH_p^1(X)$~\cite{Agoshkov-Book98,DaLi-Book93-6}. This implies that $F\in L^p(\Omega)$ and \begin{multline}\label{EQ:F Bound}
	\|F\|_{L^p(\Omega)} = \|(\eta\delta\sigma_{a,x}+\delta\eta\sigma_{a,xf}+\delta\eta\delta\sigma_{a,xf})K_I(\tilde u_x)\|_{L^p(\Omega)} \\ 
\le (\tilde \sfc_1\|\delta\eta\|_{L^\infty(\Omega)}+\tilde\sfc_2\|\delta\sigma_{a,xf}\|_{L^\infty(\Omega)}+\tilde\sfc_3\|\delta\eta\|_{L^\infty(\Omega)}\|\delta\sigma_{a,xf}\|_{L^\infty(\Omega)})\|\tilde u_x\|_{L^p(X)} \\ 
\le (\tilde{\tilde \sfc}_1\|\delta\eta\|_{L^\infty(\Omega)}+\tilde{\tilde \sfc}_2\|\delta\sigma_{a,xf}\|_{L^\infty(\Omega)}+\tilde{\tilde \sfc}_3\|\delta\eta\|_{L^\infty(\Omega)}\|\delta\sigma_{a,xf}\|_{L^\infty(\Omega)})\|g_x\|_{L^p(\Gamma_-)},
\end{multline}
Following the same argument as in Lemma~\ref{LMMA:ERT Stab} we conclude that~\eqref{EQ:ERT Dif} admits a unique solution $(u_x', u_m')\in\cH_p^1(X)\times\cH_p^1(X)$ that satisfies
\begin{equation}\label{EQ:Stab uxprime}
	\|u_x'\|_{L^p(X)} \le \hat\sfc \|\delta\sigma_{a,xf}\tilde u_x\|_{L^p(X)}\le \hat\sfc \|\delta\sigma_{a,xf}\|_{L^\infty(\Omega)}\|\tilde u_x\|_{L^p(X)}\le \hat{\hat\sfc} \|\delta\sigma_{a,xf}\|_{L^\infty(\Omega)}\|g_x\|_{L^p(\Gamma_-)},
\end{equation}
and 
\begin{multline}\label{EQ:Stab umprime}
	\|u_m'\|_{L^p(X)} \le \bar\sfc \big(\|\eta\sigma_{a,xf} K_I(u_x')\|_{L^p(\Omega)}+\|F\|_{L^p(\Omega)}\big) \le \bar{\bar\sfc} \big(\|u_x'\|_{L^p(X)}+\|F\|_{L^p(\Omega)}\big)\\ 
\le (\bar\sfc_1\|\delta\eta\|_{L^\infty(\Omega)}+\bar\sfc_2\|\delta\sigma_{a,xf}\|_{L^\infty(\Omega)}+\bar\sfc_3\|\delta\eta\|_{L^\infty(\Omega)}\|\delta\sigma_{a,xf}\|_{L^\infty(\Omega)})\|g_x\|_{L^p(\Gamma_-)}.
\end{multline}
Therefore we have $G\in L^p(\Omega)$ and the bound
\begin{multline}\label{EQ:G Bound}
	\|G\|_{L^p(\Omega)} \le \|(\eta\delta\sigma_{a,x}+\delta\eta\sigma_{a,xf}+\delta\eta\delta\sigma_{a,xf})K_I(u_x')\|_{L^p(\Omega)}+\|\delta\eta\delta\sigma_{a,xf}K_I(u_x)\|_{L^p(\Omega)} \\
\le (\sfc_1'\|\delta\eta\|_{L^\infty(\Omega)}+ \sfc_2'\|\delta\sigma_{a,xf}\|_{L^\infty(\Omega)}+\sfc_3'\|\delta\eta\|_{L^\infty(\Omega)}\|\delta\sigma_{a,xf}\|_{L^\infty(\Omega)})\|u_x'\|_{L^p(X)} \\ 
+\|\delta\eta\|_{L^\infty(\Omega)}\|\delta\sigma_{a,xf}\|_{L^\infty(\Omega)})\|u_x\|_{L^p(X)}\\ 
\le (\sfc_1''\|\delta\eta\|_{L^\infty(\Omega)}+\sfc_2'\|\delta\sigma_{a,xf}\|_{L^\infty(\Omega)}+\sfc_3'\|\delta\eta\|_{L^\infty(\Omega)}\|\delta\sigma_{a,xf}\|_{L^\infty(\Omega)})\|\delta\sigma_{a,xf}\|_{L^\infty(\Omega)}\|g_x\|_{L^p(\Gamma_-)}.
\end{multline}
We then deduce, in the same manner as above, that ~\eqref{EQ:ERT Dif2} admits a unique solution $(u_x'', u_m'')$ that satisfies
\begin{equation}\label{EQ:Stab ux2prime}
	\|u_x''\|_{L^p(X)} \le \hat\sfc \|\delta\sigma_{a,xf} u_x'\|_{L^p(\Omega)} \le \hat\sfc \|\delta\sigma_{a,xf}\|_{L^\infty(\Omega)}\|u_x'\|_{L^p(X)}\le \hat\sfc\hat{\hat\sfc} \|\delta\sigma_{a,xf}\|_{L^\infty(\Omega)}^2\|g_x\|_{L^p(\Gamma_-)},
\end{equation}
and
\begin{multline}\label{EQ:Stab um2prime}
	\|u_m''\|_{L^p(X)} \le \|\eta\sigma_{a,xf} K_I(u_x'')\|_{L^p(\Omega)}+\|G\|_{L^p(\Omega)}\le \dot\sfc\|u_x''\|_{L^p(X)}+\|G\|_{L^p(\Omega)} \\ 
\le (\dot\sfc_1\|\delta\eta\|_{L^\infty(\Omega)}+\dot\sfc_2\|\delta\sigma_{a,xf}\|_{L^\infty(\Omega)}+\dot\sfc_3\|\delta\eta\|_{L^\infty(\Omega)}\|\delta\sigma_{a,xf}\|_{L^\infty(\Omega)})\|\delta\sigma_{a,xf}\|_{L^\infty(\Omega)}\|g_x\|_{L^p(\Gamma_-)}.
\end{multline}
The estimates~\eqref{EQ:Stab ux2prime} and~\eqref{EQ:Stab um2prime} show that $(u_x,u_m)$ is Fr\'echet differentiable with respect to $\eta$ and $\sigma_{a,xf}$ as a map: $L^\infty(\Omega)\times L^\infty(\Omega) \mapsto L^p(\Omega)\times L^p(\Omega)$ ($p\in[1,\infty]$). Note that $u_x$ is independent of $\eta$, so its derivative with respect to $\eta$ is zero, as can be seen from~\eqref{EQ:Stab ux2prime}.

The differentiability of $H$ with respect to $(\eta,\sigma_{a,xf})$ then follows from the chain rule and the fact that $\sigma_{a,x}^\eta$ is differentiable with respect to $(\eta,\sigma_{a,xf})$. Alternatively, it can also be seen easily from the bounds~\eqref{EQ:Stab uxprime},~\eqref{EQ:Stab ux2prime},~\eqref{EQ:Stab um2prime} and the following algebraic calculation:
\begin{multline}
	H[\tilde\eta, \tilde\sigma_{a,xf}]-H[\eta,\sigma_{a,xf}]-H'[\eta,\sigma_{a,xf}](\delta\eta, \delta\sigma_{a,xf}) \\ 
=\Xi\Big[\sigma_{a,x}^\eta K_I( u_x'')+\sigma_{a,m} K_I(u_m'')  
+(\delta\sigma_{a,xf}-\Delta(\eta\sigma_{a,x})K_I(u_x') -\delta\eta\delta\sigma_{a,xf} K_I(u_x)\Big].
\end{multline}
This completes the proof.
\end{proof}

We will study Born approximation, i.e. linearization, of the inverse problem of QfPAT in Section~\ref{SEC:Two Coeff}. The above result justifies the linearization process. To compute the partial derivative with respect to $\eta$ (resp. $\sigma_{a,xf}$), denoted by $H_\eta'[\eta,\sigma_{a,xf}]$ (resp. $H_\sigma'[\eta,\sigma_{a,xf}]$), we simply set $\delta\sigma_{a,xf}=0$ (resp. $\delta\eta=0$) in~\eqref{EQ:Data QfPAT Linearized} and ~\eqref{EQ:ERT QfPAT Linearized}. It is straightforward to check that
\begin{equation}\label{EQ:Data QfPAT Linearized Eta}
	\frac{H_\eta'[\eta,\sigma_{a,xf}](\delta\eta)}{\Xi \sigma_{a,xf} K_I(u_{x})}=-\delta\eta +\frac{\sigma_{a,m}}{\sigma_{a,xf} K_I(u_{x})} K_I(v_{m}),
\end{equation}
with $v_m\in \cH_p^1(X)$ the unique solution to
\begin{equation}\label{EQ:ERT QfPAT Linearized Eta}
\begin{array}{rcll}
	\bv\cdot\nabla v_m+ \sigma_{t,m} v_m & = & \sigma_{s,m} K_\Theta(v_m) + \delta \eta\sigma_{a,xf} K_I(u_x), &\mbox{in}\ \ X\\
	v_m(\bx,\bv) &=& 0, & \mbox{on}\ \ \Gamma_-,
\end{array}
\end{equation}
and 
\begin{equation}\label{EQ:Data QfPAT Linearized Sigma}
	\frac{H_\sigma'[\eta,\sigma_{a,xf}](\delta\sigma_{a,xf})}{\Xi (1-\eta)K_I(u_{x})} = \delta\sigma_{a,xf} +\frac{\sigma_{a,x}^\eta}{(1-\eta)K_I(u_{x})} K_I(v_x)+\frac{\sigma_{a,m}}{(1-\eta)K_I(u_{x})} K_I(v_{m}),
	\end{equation}
with $(v_x,v_m)\in \cH_p^1(X)\times\cH_p^1(X)$ the unique solution to
\begin{equation}\label{EQ:ERT QfPAT Linearized Sigma}
\begin{array}{rcll}
	\bv\cdot\nabla v_x+ \sigma_{t,x} v_x & = & \sigma_{s,x} K_\Theta(v_x)-\delta\sigma_{a,xf} u_x, &\mbox{in}\ \ X\\
  	\bv\cdot\nabla v_m+ \sigma_{t,m} v_m & = & \sigma_{s,m} K_\Theta(v_m) +\eta\sigma_{a,xf}K_I(v_x)+ \eta\delta\sigma_{a,xf} K_I(u_x), &\mbox{in}\ \ X\\
    v_x(\bx,\bv) = 0,&& v_m(\bx,\bv) = 0 & \mbox{on}\ \ \Gamma_-.
	\end{array}
\end{equation}

The following result is a standard application of the averaging lemma~\cite{DaLi-Book93-6,GoLiPeSe-JFA88,MK-Book98}. It will be useful in Section~\ref{SEC:Two Coeff}.
\begin{lemma}\label{LMMA:Fredholm}
Assume that ({\bf A-iii}) holds. Let $g_x(\bx,\bv)\in L^\infty(\Gamma_-)$ be such that $K_I(u_x)\ge \sfc>0$ for some constant $\sfc$. Then the rescaled linearized data $\frac{H_\sigma'[\eta,\sigma_{a,xf}](\delta\sigma_{a,xf})}{\Xi (1-\eta)K_I(u_{x})}$, viewed as the linear operator
\begin{equation}\label{EQ:Data Lin sigma}
\frac{H_\sigma'[\eta,\sigma_{a,xf}](\delta\sigma_{a,xf})}{\Xi K_I(u_{x})}:
	\begin{array}{ccl}
		\delta\sigma_{a,xf} &\mapsto& (1-\eta)\delta\sigma_{a,xf} +\frac{\sigma_{a,x}^\eta}{K_I(u_{x})} K_I(v_x)+\frac{\sigma_{a,m}}{K_I(u_{x})} K_I(v_{m})\\
		L^2(\Omega) &\mapsto & L^2(\Omega)
	\end{array}
	\end{equation}
	is Fredholm. The same is true for $\frac{H_\eta'[\eta,\sigma_{a,xf}](\delta\eta)}{\Xi K_I(u_{x})}$ if the background coefficient $\sigma_{a,xf}\ge \tilde\sfc >0$ for some $\tilde\sfc$.
\end{lemma}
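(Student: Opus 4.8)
\emph{Plan.} The idea is to write each of the two operators as the sum of an invertible multiplication operator on $L^2(\Omega)$ and a compact operator, so that the Fredholm property follows from its stability under compact perturbations. For the map in~\eqref{EQ:Data Lin sigma} I would split it as $\cM+\cK$, where $\cM:\delta\sigma_{a,xf}\mapsto(1-\eta)\delta\sigma_{a,xf}$ and
\[
\cK:\delta\sigma_{a,xf}\mapsto\frac{\sigma_{a,x}^\eta}{K_I(u_{x})}K_I(v_x)+\frac{\sigma_{a,m}}{K_I(u_{x})}K_I(v_{m}),
\]
with $(v_x,v_m)$ the solution of~\eqref{EQ:ERT QfPAT Linearized Sigma}. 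Since ({\bf A-iii}) forces $\eta$ to take values in $[\sfc_3,\sfc_4]\subset(0,1)$, the multiplier $1-\eta$ is bounded above and below by positive constants, so $\cM$ is a bounded bijection of $L^2(\Omega)$ with bounded inverse (multiplication by $1/(1-\eta)$). For the $\eta$-operator the same decomposition applies with $\cM:\delta\eta\mapsto-\sigma_{a,xf}\delta\eta$, which is invertible precisely because of the extra hypothesis $\sigma_{a,xf}\ge\tilde\sfc>0$, and with the compact part built only from $K_I(v_m)$, $v_m$ solving~\eqref{EQ:ERT QfPAT Linearized Eta}.

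The heart of the argument is the compactness of $\cK$, which I would obtain by factoring it through the transport solution map and invoking the averaging lemma. Since $g_x\in L^\infty(\Gamma_-)$, Lemma~\ref{LMMA:ERT Stab} with $p=\infty$ gives $u_x\in L^\infty(X)$, so the source $-\delta\sigma_{a,xf}u_x$ in~\eqref{EQ:ERT QfPAT Linearized Sigma} is controlled in $L^2(X)$ by $\|u_x\|_{L^\infty(X)}\|\delta\sigma_{a,xf}\|_{L^2(\Omega)}$; by the $p=2$ version of Lemma~\ref{LMMA:ERT Stab} the map $\delta\sigma_{a,xf}\mapsto v_x$ is bounded from $L^2(\Omega)$ into $L^2(X)$, and reading off $\bv\cdot\nabla v_x=\sigma_{s,x}K_\Theta(v_x)-\sigma_{t,x}v_x-\delta\sigma_{a,xf}u_x\in L^2(X)$ shows it is in fact bounded into $\cH_2^1(X)$. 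The averaging lemma~\cite{DaLi-Book93-6,GoLiPeSe-JFA88,MK-Book98} then provides a gain of half a derivative: $\delta\sigma_{a,xf}\mapsto K_I(v_x)$ is bounded from $L^2(\Omega)$ into $W_2^{1/2}(\Omega)$ (the fractional Sobolev space of order $1/2$), and since $\Omega$ is bounded, Rellich's theorem makes this map compact from $L^2(\Omega)$ to $L^2(\Omega)$. For $v_m$ I would iterate the argument: its source in~\eqref{EQ:ERT QfPAT Linearized Sigma}, namely $\eta\sigma_{a,xf}K_I(v_x)+\eta\delta\sigma_{a,xf}K_I(u_x)$, is bounded in $L^2(X)$ by a multiple of $\|\delta\sigma_{a,xf}\|_{L^2(\Omega)}$, using the bound just obtained for $K_I(v_x)$ together with $K_I(u_x)\in L^\infty(\Omega)$; the same transport-plus-averaging reasoning then yields $\delta\sigma_{a,xf}\mapsto K_I(v_m)$ bounded into $W_2^{1/2}(\Omega)$, hence compact into $L^2(\Omega)$. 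Finally the prefactors $\sigma_{a,x}^\eta/K_I(u_{x})$ and $\sigma_{a,m}/K_I(u_{x})$ lie in $L^\infty(\Omega)$ — their numerators are bounded by ({\bf A-iii}) and their common denominator is bounded below by the standing hypothesis $K_I(u_x)\ge\sfc>0$ — so multiplication by them preserves compactness; thus $\cK$ is compact on $L^2(\Omega)$. Consequently $\cM+\cK$ is a compact perturbation of an invertible operator, hence Fredholm (of index zero), and the identical reasoning, now using only~\eqref{EQ:Data QfPAT Linearized Eta}--\eqref{EQ:ERT QfPAT Linearized Eta}, handles the $\eta$-operator.

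I expect the only delicate point to be the application of the averaging lemma on the bounded domain $\Omega$ with the zero inflow condition, since the classical statements are on $\bbR^d$: one must either extend the transported densities by zero across $\Gamma_-$ (legitimate because their traces on $\Gamma_-$ vanish) or use a local version of the lemma together with a finite covering of $\overline\Omega$ and the $C^2$ regularity of $\partial\Omega$, and one must verify at each step that both the density and its streaming derivative are genuinely in $L^2(X)$ — this is exactly where $g_x\in L^\infty(\Gamma_-)$ is needed. All remaining estimates are routine consequences of Lemma~\ref{LMMA:ERT Stab} and the bounds in ({\bf A-iii}).
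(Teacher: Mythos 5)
Your proposal is correct and takes essentially the same route as the paper: split the operator into multiplication by $1-\eta$ (resp.\ $-\sigma_{a,xf}$), invertible thanks to the bounds in ({\bf A-iii}) (resp.\ the extra hypothesis $\sigma_{a,xf}\ge\tilde\sfc>0$), plus a remainder that is compact on $L^2(\Omega)$ by the averaging lemma combined with the compact embedding of $W_2^{1/2}(\Omega)$ into $L^2(\Omega)$, so that the Fredholm property follows from stability under compact perturbation. The paper packages the compact part through the solution operators $S_x,S_m$ and the operators $\Lambda_x,\Lambda_m,\Lambda_{mx}$, whereas you track the $L^2$ bounds on the sources in~\eqref{EQ:ERT QfPAT Linearized Sigma} directly using $u_x\in L^\infty(X)$ and $K_I(u_x)\ge\sfc$, but this is the same argument, carried out if anything a bit more carefully than in the paper.
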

\begin{proof}
Let us denote by $S_z$ ($z\in\{x,m\}$) the solution operator of the transport equation with coefficients $\sigma_{a,z}$, $\sigma_{s,z}$ and vacuum boundary condition, i.e. $w_z=S_z(f)$ with $w_z$ the solution to:
\begin{equation*}
\bv\cdot\nabla w_z + \sigma_{t,z} w_x-\sigma_{s,z}K_\Theta(w_z)=f,\ \ \mbox{in}\ X, \qquad w_z=0\ \ \mbox{on}\ \Gamma_- .
\end{equation*}
We can then write $K_I(v_x)$ and $K_I(v_m)$ in~\eqref{EQ:Data Lin sigma} respectively as 
\begin{equation}
K_I(v_x)=-\Lambda_x(\delta\sigma_{a,xf}), \quad\mbox{and},\quad K_I(v_m) =-\Lambda_{mx}(\delta\sigma_{a,xf})+\Lambda_m(\eta\delta\sigma_{a,xf})
\end{equation}
where the operators $\Lambda_x$, $\Lambda_m$ and $\Lambda_{mx}$ are defined as
\begin{eqnarray}
\label{EQ:Lambda1}\Lambda_x(\delta\sigma_{a,xf}) \equiv K_I\big(S_x(u_x \delta\sigma_{a,xf})\big),  \qquad \Lambda_m(\delta\sigma_{a,xf})=K_I\big(S_m(K_I(u_x)\delta\sigma_{a,xf} )\big),&&\\
\label{EQ:Lambda2}\Lambda_{mx}(\delta\sigma_{a,xf})\equiv K_I\big(S_m(\eta\sigma_{a,xf}K_I(S_x(u_x \delta\sigma_{a,xf}))\big).&&
\end{eqnarray}
Following the averaging lemma~\cite{DaLi-Book93-6,GoLiPeSe-JFA88,MK-Book98} and the compact embedding of $W_2^{1/2}(\Omega)$ to $L^2(\Omega)$, we conclude $K_I: L^2(X)\to L^2(\Omega)$ is compact. Due to boundedness of $u_x$ (and therefore $K_I(u_x)$), $\eta$ and $\sigma_{a,xf}$, both $S_x$ and $S_m$ are compact as operators from $L^2(\Omega)$ to $L^2(X)$ with the assumptions on the coefficients in ({\bf A-i})~\cite{DaLi-Book93-6,MK-Book98}. Hence, $\Lambda_x$, $\Lambda_m$, and $\Lambda_{mx}$ are all compact operators on $L^2(\Omega)$. Therefore $\frac{H_\sigma'[\eta,\sigma_{a,xf}](\delta\sigma_{a,xf})}{\Xi K_I(u_{x})}$ as an operator can be represented as $(1-\eta)\cI+\cK$ with $\cK$ compact. Therefore it is Fredholm. The same argument works for $\frac{H_\eta'[\eta,\sigma_{a,xf}](\delta\eta)}{\Xi K_I(u_{x})}$.
\end{proof}

\section{Reconstructing of a Single Coefficient}
\label{SEC:One Coeff}

In this section, we consider the reconstruction of one of the two coefficients of interests, assuming the other is known. We start with the reconstruction of the quantum efficiency.

\subsection{The reconstruction of $\eta$}
\label{SUBSEC:Eta}

Assume now that the fluorescence absorption coefficient $\sigma_{a,xf}$ is \emph{known} and we are interested in reconstructing only $\eta$. This is a linear inverse source problem. We can derive the following stability result on the reconstruction.
\begin{theorem}\label{THM:Rec Eta}
	Let $p\in[1,\infty]$ and the source  $g_x\in L^p(\Gamma_-)$ be such that the transport solution $u_x$ to~\eqref{EQ:ERT QfPAT} satisfies $K_I(u_x)\ge \tilde c>0$ for any $(\eta,\sigma_{a,xf})\in\cA$. Let $H$ and $\tilde H$ be two data sets generated with coefficients $(\eta, \sigma_{a,xf})$ and $(\tilde\eta, \sigma_{a,xf})$ respectively. Then $H=\tilde H$ a.e. implies $\eta=\tilde\eta$ a.e.. Moreover, the following stability estimate holds,
	\begin{equation}\label{EQ:Stab eta}
	\sfc\|H-\tilde H\|_{L^p(\Omega)}\le \|(\eta-\tilde \eta) \sigma_{a,xf} K_I(u_x)\|_{L^p(\Omega)}\le \sfC\|H-\tilde H\|_{L^p(\Omega)}
	\end{equation}
	where the constants $\sfc$ and $\sfC$ depend on $\Omega$ and the coefficients $\sigma_{a,xi}$, $\sigma_{a,m}$, $\sigma_{s,x}$, $\sigma_{s,m}$, and $\Xi$. 
\end{theorem}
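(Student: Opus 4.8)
The plan is to exploit that, since $\sigma_{a,xf}$ is known, the excitation field $u_x$ is fixed by the known coefficients $\sigma_{a,xi},\sigma_{a,xf},\sigma_{s,x}$ and the source $g_x$, so that $u_x=\tilde u_x$ and $H$ depends \emph{affinely} on $\eta$. First I would set $w:=u_m-\tilde u_m$; subtracting the emission equations in \eqref{EQ:ERT QfPAT} shows that $w$ solves exactly \eqref{EQ:ERT QfPAT Linearized Eta} with $\delta\eta$ replaced by $\eta-\tilde\eta$, i.e.\ $\bv\cdot\nabla w+\sigma_{t,m}w-\sigma_{s,m}K_\Theta(w)=q$ in $X$, $w|_{\Gamma_-}=0$, with source $q:=(\eta-\tilde\eta)\,\sigma_{a,xf}K_I(u_x)$ (a function of $\bx$ only), while \eqref{EQ:Data QfPAT}--\eqref{EQ:Data QfPAT Linearized Eta} give $H-\tilde H=\Xi\bigl(-q+\sigma_{a,m}K_I(w)\bigr)$. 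Since $\|q\|_{L^p(\Omega)}$ is the middle quantity in \eqref{EQ:Stab eta}, the theorem reduces to two-sided control of the linear map $q\mapsto H-\tilde H$.

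The left inequality in \eqref{EQ:Stab eta} is the easy direction: Lemma~\ref{LMMA:ERT Stab} applied to the $w$-equation (whose source is constant in $\bv$, so $\|q\|_{L^p(X)}\le\sfc\|q\|_{L^p(\Omega)}$), together with $\|K_I(w)\|_{L^p(\Omega)}\le\sfc\|w\|_{L^p(X)}$ and the bounds on $\Xi,\sigma_{a,m}$ from ({\bf A-iii}), gives $\|H-\tilde H\|_{L^p(\Omega)}\le\sfc^{-1}\|q\|_{L^p(\Omega)}$ with constants depending only on $\Omega$ and the coefficient bounds.

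For the right inequality --- which also delivers the uniqueness claim --- I would rewrite the identity as $(\cI-\sigma_{a,m}K_I S_m)q=-\Xi^{-1}(H-\tilde H)$, where $S_m$ is the vacuum-boundary transport solution operator from the proof of Lemma~\ref{LMMA:Fredholm} (with $q$ extended constantly in $\bv$). By the averaging-lemma argument of Lemma~\ref{LMMA:Fredholm}, $q\mapsto\sigma_{a,m}K_I(S_m q)$ is compact on $L^2(\Omega)$, so $\cI-\sigma_{a,m}K_I S_m$ is Fredholm of index zero and it suffices to prove injectivity. If $H=\tilde H$ then $q=\sigma_{a,m}K_I(w)$, so $w$ solves the homogeneous problem $\bv\cdot\nabla w+\sigma_{t,m}w-\sigma_{s,m}K_\Theta(w)-\sigma_{a,m}K_I(w)=0$ in $X$, $w|_{\Gamma_-}=0$. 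Multiplying by $w$ and integrating over $X$: the streaming term yields $\tfrac12\int_{\Gamma_+}|\bn(\bx)\cdot\bv|\,w^2\ge0$ (using $w|_{\Gamma_-}=0$); by the symmetry and normalization of $\Theta$ in \eqref{EQ:Theta Properties}, $K_\Theta$ is a self-adjoint Markov operator on $L^2(\bbS^{d-1})$ whose eigenspace for the eigenvalue $1$ consists of the constants, while $K_I$ is the orthogonal projection onto constants, so $\int_{\bbS^{d-1}}w\,K_\Theta(w)\le\int_{\bbS^{d-1}}w^2$ and $\int_{\bbS^{d-1}}w\,K_I(w)\le\int_{\bbS^{d-1}}w^2$ pointwise in $\bx$, with equality only when $w(\bx,\cdot)$ is constant. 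As $\sigma_{t,m}=\sigma_{a,m}+\sigma_{s,m}$, these inequalities force the outflow term and both defects to vanish, so $w(\bx,\bv)=W(\bx)$; substituting back gives $\bv\cdot\nabla W=0$ for all $\bv$, hence $W$ is constant, and $W\equiv0$ by the inflow condition. Thus $q\equiv0$, and since $\sigma_{a,xf}K_I(u_x)\ge\sfc_5\tilde c>0$ a.e.\ we get $\eta=\tilde\eta$ a.e. Injectivity of the Fredholm operator then gives, via the open mapping theorem, $\|q\|_{L^2(\Omega)}\le\sfC\|H-\tilde H\|_{L^2(\Omega)}$; the bound for the remaining $p\in[1,\infty]$ follows from the $L^p$ averaging lemma ($1<p<\infty$) plus a short regularity bootstrap at the endpoints.

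I expect the injectivity step to be the crux. The operator $\sigma_{a,m}K_I S_m$ encodes re-absorption of the emitted fluorescence, its operator norm need not be small (so a naive contraction estimate fails), and the only robust way I see to exclude the eigenvalue $1$ is the energy identity above, which relies essentially on the precise normalization of $K_\Theta$ and $K_I$ in \eqref{EQ:Theta Properties}; the Fredholm packaging is what converts this qualitative injectivity into the quantitative lower bound. The remaining bookkeeping --- the $\bv$-constant source, the full range of $p$ --- is routine.
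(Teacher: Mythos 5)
Your setup coincides with the paper's: $u_x=\tilde u_x$, $w=u_m-\tilde u_m$, the identity $(H-\tilde H)/\Xi=-q+\sigma_{a,m}K_I(w)$ with $q=(\eta-\tilde\eta)\sigma_{a,xf}K_I(u_x)$, and the left inequality by the standard source estimate. For the right inequality and uniqueness, however, you take a genuinely different route. You recast the identity as $(\cI-\sigma_{a,m}K_IS_m)q=-\Xi^{-1}(H-\tilde H)$, obtain Fredholmness from the averaging lemma as in Lemma~\ref{LMMA:Fredholm}, prove injectivity through an energy identity for the homogeneous ``conservative'' equation (equality in Cauchy--Schwarz forcing $w$ to be isotropic, then $w\equiv 0$), and convert injectivity into the quantitative bound by the open mapping theorem. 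The paper performs the same elimination of $q$ but stays at the PDE level: the $w$-equation becomes a transport equation in a conservative medium, \eqref{EQ:ERT Eta Rec F}, with source $-(H-\tilde H)/\Xi$, and the classical well-posedness of such equations on bounded domains (\cite[Theorem 1, p.~337]{DaLi-Book93-6}) gives $\|w\|_{L^p(X)}\le \sfc\,\|H-\tilde H\|_{L^p(\Omega)}$ for every $p\in[1,\infty]$ at once, with uniqueness exactly as in your last step. Your version is more self-contained and explains structurally why $1$ is not an eigenvalue of $\sigma_{a,m}K_IS_m$; the paper's version buys the full range of $p$ in one stroke and an a priori (rather than open-mapping, non-constructive) constant, which is also what makes Reconstruction Algorithm I explicit.

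The one genuine gap is your extension from $L^2$ to all $p\in[1,\infty]$, which the theorem asserts. The phrase ``$L^p$ averaging lemma plus a short regularity bootstrap at the endpoints'' hides real difficulties: velocity averaging fails in $L^1$ and $L^\infty$, so the compactness-based Fredholm argument does not survive at the endpoints, and for $1\le p<2$ even injectivity needs an extra step before your $L^2$ energy identity applies, since on a bounded domain $L^p(\Omega)\not\subset L^2(\Omega)$ for $p<2$. If you want all $p$ with constants depending only on the listed data, the cleaner move is the paper's: after the substitution, invoke the $L^p$ theory of conservative transport equations on bounded domains directly. A last small caveat, which applies equally to the paper's normalization of $\widetilde\Theta$: your inequality $\int_{\bbS^{d-1}} wK_I(w)\,d\bv\le\int_{\bbS^{d-1}} w^2\,d\bv$ uses the convention that the measure of $\bbS^{d-1}$ is one (so $K_I$ is the spherical average); otherwise a factor $|\bbS^{d-1}|$ appears and the equality-case argument must be restated accordingly.
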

\begin{proof}
Let $(u_x,u_m)$ and $(\tilde u_x,\tilde u_m)$ be solutions to the coupled transport system~\eqref{EQ:ERT QfPAT} with coefficients $(\eta,\sigma_{a,xf})$ and $(\tilde\eta,\sigma_{a,xf})$ respectively. We notice immediately that $u_x=\tilde u_x$. Define $w_m=u_m-\tilde u_m$. We then verify that
\begin{equation}\label{EQ:Data Difference}
(H-\tilde H)/\Xi=-(\eta-\tilde\eta)\sigma_{a,xf}K_I(u_x)+\sigma_{a,m}K_I(w_m)
\end{equation}
This leads to the bound
\begin{equation}\label{EQ:Stab Eta I}
\|H-\tilde H\|_{L^p(\Omega)}\le \sfc_1 \|(\eta-\tilde\eta)\sigma_{a,xf} K_I(u_x)\|_{L^p(\Omega)}+\sfc_2(\sigma_{a,m})\|K_I(w_m)\|_{L^p(\Omega)} .
\end{equation}
and the bound
\begin{equation}\label{EQ:Stab Eta II}
\|(\eta-\tilde\eta)\sigma_{a,xf} K_I(u_x)\|_{L^p(\Omega)} \le \tilde\sfc_1(\Xi)\|H-\tilde H\|_{L^p(\Omega)}+\tilde\sfc_2(\sigma_{a,m})\|K_I(w_m)\|_{L^p(\Omega)},
\end{equation}
We check also that $w_m$ solves the transport equation
\begin{equation}\label{EQ:ERT IP eta}
\begin{array}{rcll}
	\bv\cdot\nabla w_m+ (\sigma_{a,m}+\sigma_{s,m}) w_m & = & \sigma_{s,m} K_\Theta(w_m) 
		+ (\eta-\tilde\eta)\sigma_{a,xf}K_I(u_x), &\mbox{in}\ \ X\\
	w_m(\bx,\bv) &=& 0,& \mbox{on}\ \ \Gamma_-.
\end{array}
\end{equation}
It then follows from classical results in transport theory~\cite{Agoshkov-Book98,DaLi-Book93-6} that this equation admits a unique solution $w_m\in \cH_p^1(X)$ that satisfies the following stability estimate
\begin{equation}\label{EQ:Stab Eta I A}
\|w_m\|_{L^p(X)} \le \sfc_3(\Omega,\sigma_{a,m},\sigma_{s,m},\Xi)\|(\eta-\tilde\eta)\sigma_{a,xf}K_I(u_x)\|_{L^p(\Omega)}.
\end{equation}
The left bound in~\eqref{EQ:Stab eta} then follows from ~\eqref{EQ:Stab Eta I} and~\eqref{EQ:Stab Eta I A}.

To derive the right bound in~\eqref{EQ:Stab eta}, we replace the last term in the transport equation~\eqref{EQ:ERT IP eta} with $\sigma_{a,m}K_I(w_m)-(H-\tilde H)/\Xi$ to get
\begin{equation}
\begin{array}{rcll}
	\bv\cdot\nabla w_m+ (\sigma_{a,m}+\sigma_{s,m}) w_m & = & \sigma_{a,m}K_I(w_m) + 
		\sigma_{s,m} K_\Theta(w_m) - \frac{H-\tilde H}{\Xi}, &\mbox{in}\ \ X\\
	w_m(\bx,\bv) &=& 0,& \mbox{on}\ \ \Gamma_-.
\end{array}
\end{equation}
We define $\widetilde\Theta(\bx,\bv,\bv')=\frac{\sigma_{a,m}}{\sigma_{a,m}+\sigma_{s,m}}+\frac{\sigma_{s,m}}{\sigma_{a,m}+\sigma_{s,m}}\Theta$. It is straightforward to verify that $\widetilde\Theta$ is symmetric and normalized in the sense of~\eqref{EQ:Theta Properties}. We can then rewrite the above transport equation as
\begin{equation}\label{EQ:ERT Eta Rec F}
\begin{array}{rcll}
	\bv\cdot\nabla w_m+ (\sigma_{a,m}+\sigma_{s,m}) w_m & = & (\sigma_{a,m}+\sigma_{s,m})K_{\widetilde \Theta}(w_m)- \frac{H-\tilde H}{\Xi}, &\mbox{in}\ \ X\\
	w_m(\bx,\bv) &=& 0,& \mbox{on}\ \ \Gamma_-.
\end{array}
\end{equation}
This is a transport equation for a conservative medium. Due to the fact that $\Omega$ is bounded, classical results in transport theory (see for instance~\cite[Theorem 1 on page 337]{DaLi-Book93-6}) then concludes that the equation admits a unique solution $w_m\in \cH_p^1(X)$. Moreover, we have the stability estimate
\begin{equation}\label{EQ:Stab Eta II A}
\|w_m\|_{L^p(X)} \le \sfc_4(\Omega,\sigma_{a,m},\sigma_{s,m},\Xi)\|H-\tilde H\|_{L^p(\Omega)}
\end{equation}
The right bound in~\eqref{EQ:Stab eta} then follows from ~\eqref{EQ:Stab Eta II} and~\eqref{EQ:Stab Eta II A}. 
The uniqueness of the reconstruction then follows from the fact that $H=\tilde H$ implies $w_m=0$ from~\eqref{EQ:ERT Eta Rec F}, which then implies $\eta=\tilde\eta$ from~\eqref{EQ:Data Difference}.
\end{proof}
Note that the bound in~\eqref{EQ:Stab eta} is weighted in the sense that it is on $(\eta-\tilde\eta)K_I(u_x)$ not directly on $(\eta-\tilde\eta)$. This means that if $K_I(u_x)$ is too small, it is very hard to reconstruct accurately $\eta$.

The proof of the above stability result is constructive in the sense that it provides an explicit reconstruction procedure for the recovery of $\eta$. We now summarize the procedure in the following algorithm.
\vskip 2mm
\noindent{\bf Reconstruction Algorithm I.}
\begin{itemize}
	\item[S1.] Given $\sigma_{a,xf}$, solve the first transport equation in~\eqref{EQ:ERT QfPAT} with the boundary condition $g_x$ for $u_x$;
	\item[S2.] Evaluate the function $q(\bx)=\sigma_{a,x} K_I(u_x)-\frac{H}{\Xi}$;
	\item[S3.] Solve the following transport equation for $u_m$:
	\begin{equation}\label{EQ:ERT IP Recon eta}
	\begin{array}{rcll}
  		\bv\cdot\nabla u_m+ (\sigma_{a,m}+\sigma_{s,m}) u_m &=& (\sigma_{a,m}+\sigma_{s,m})K_{\widetilde\Theta}(u_m) + q(\bx), &\mbox{in}\ \ X\\
		u_m(\bx,\bv) &=& 0,& \mbox{on}\ \ \Gamma_-.
	\end{array}
\end{equation}
	\item[S4.] Reconstruct $\eta$ as $-\big(\dfrac{H}{\Xi}-\sigma_{a,x} K_I(u_x)-\sigma_{a,m} K_I(u_m)\big)/(\sigma_{a,xf} K_I(u_x))$.
\end{itemize}
This is a direct reconstruction algorithm in the sense that it does not involve any iteration on the the unknown coefficient. The algorithm is very efficient since it requires solving the transport equation ~\eqref{EQ:ERT IP Recon eta} only once.

\begin{remark}\label{RM:Rec Eta}
Thanks to the fact that the problem of reconstructing $\eta$ given $\sigma_{a,xf}$ is linear, we can easily verify that the same type of uniqueness and stability results in Theorem~\ref{THM:Rec Eta} hold for the linearized problem of reconstructing $\eta$ defined in~\eqref{EQ:ERT QfPAT Linearized Eta} and~\eqref{EQ:Data QfPAT Linearized Eta}. Moreover, the above reconstruction algorithm works in exactly the same manner in the linearized setting.
\end{remark}

\subsection{The reconstruction of $\sigma_{a,xf}$}
\label{SUBSEC:Sigma}

We now assume that we know $\eta$ and aim at reconstructing $\sigma_{a,xf}$. In this case, we can show the following result.
\begin{theorem}\label{THM:Sigma}
Let $g_x\in L^p(\Gamma_-)$ ($p\in[1,\infty]$) be such that the solution $u_x$ to the transport system~\eqref{EQ:ERT QfPAT} satisfies $u_x=K_I(u_x) \ge \tilde \sfc> 0$ for any coefficient pair $(\eta, \sigma_{a,xf})\in \cA$. Let $H$ and $\tilde H$ be data sets generated with coefficient pairs $(\eta, \sigma_{a,xf})$ and $(\eta, \tilde\sigma_{a,xf})$ respectively. Then $H=\tilde H$ a.e. implies $\sigma_{a,xf}=\tilde\sigma_{a,xf}$ a.e.. Moreover, the following bound holds,
\begin{equation}\label{EQ:Stab sigma}
	 \sfc \|H-\tilde H\|_{L^p(\Omega)}\le \|(\sigma_{a,xf}-\tilde \sigma_{a,xf}) K_I(u_x)\|_{L^p(\Omega)} \le \sfC \|H-\tilde H\|_{L^p(\Omega)},
\end{equation}
with $\sfc$ and $\sfC$ depending on $\Omega$, $\sigma_{a,xi}$, $\sigma_{a,m}$, $\sigma_{s,x}$, $\sigma_{s,m}$, $\eta$ and $\Xi$.
\end{theorem}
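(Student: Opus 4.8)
The plan is to follow the architecture of the proof of Theorem~\ref{THM:Rec Eta}, with the genuinely new feature that here $\sigma_{a,xf}$ also appears in the \emph{first} transport equation, so the two excitation fields no longer coincide. Write $\delta\sigma=\sigma_{a,xf}-\tilde\sigma_{a,xf}$, $w_x=u_x-\tilde u_x$, $w_m=u_m-\tilde u_m$, with tildes denoting the objects generated by the pair $(\eta,\tilde\sigma_{a,xf})$. Subtracting the two copies of~\eqref{EQ:ERT QfPAT} (the illumination $g_x$ being common) shows that $w_x$ solves the excitation transport equation with vacuum inflow and source $-\delta\sigma\,\tilde u_x$, while $w_m$ solves the emission transport equation with vacuum inflow and source $\eta\sigma_{a,xf}K_I(w_x)+\eta\,\delta\sigma\,K_I(\tilde u_x)$; subtracting the two data relations~\eqref{EQ:Data QfPAT} and using $\sigma_{a,x}^\eta=\sigma_{a,xi}+(1-\eta)\sigma_{a,xf}$ gives the pointwise identity
\begin{equation}\label{EQ:Data Diff Sigma}
(H-\tilde H)/\Xi \;=\; \sigma_{a,x}^\eta\,K_I(w_x)\;+\;(1-\eta)\,\delta\sigma\,K_I(\tilde u_x)\;+\;\sigma_{a,m}\,K_I(w_m).
\end{equation}

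The left inequality in~\eqref{EQ:Stab sigma} is the easy half: the standard $L^p$ transport estimates (as in Lemma~\ref{LMMA:ERT Stab} and~\cite{Agoshkov-Book98,DaLi-Book93-6}), applied successively to the $w_x$- and the $w_m$-equation, give $\|w_x\|_{L^p(X)}+\|w_m\|_{L^p(X)}\le \sfc\|\delta\sigma\,K_I(u_x)\|_{L^p(\Omega)}$ once the upper and lower bounds on $K_I(u_x),K_I(\tilde u_x)$ are used to pass between $\delta\sigma$, $\delta\sigma\,\tilde u_x$ and $\delta\sigma\,K_I(u_x)$; feeding $K_I(w_x),K_I(w_m)$ into~\eqref{EQ:Data Diff Sigma} then bounds $\|H-\tilde H\|_{L^p(\Omega)}$ from above by $\sfC\|\delta\sigma\,K_I(u_x)\|_{L^p(\Omega)}$.

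For the right inequality---the harder half, which simultaneously yields uniqueness---I would use~\eqref{EQ:Data Diff Sigma} to eliminate the unknown term $(1-\eta)\,\delta\sigma\,K_I(\tilde u_x)$ from both transport equations, obtaining a \emph{closed} coupled transport system for $(w_x,w_m)$ whose only inhomogeneity is a multiple of $H-\tilde H$. The substitution feeds a term $\frac{\sigma_{a,x}^\eta}{1-\eta}K_I(w_x)$ back into the excitation equation and a term $\frac{\eta\sigma_{a,m}}{1-\eta}K_I(w_m)$ (with the opposite, absorbing sign) into the emission equation; the aim, exactly as in passing from~\eqref{EQ:ERT IP eta} to~\eqref{EQ:ERT Eta Rec F}, is to amalgamate these with $\sigma_{s,x}K_\Theta$ and $\sigma_{s,m}K_\Theta$ into a collision operator of \emph{conservative type} on the bounded domain $\Omega$, so that~\cite[Theorem~1 on page~337]{DaLi-Book93-6} (in its coupled-system form) produces a unique $(w_x,w_m)\in\cH_p^1(X)\times\cH_p^1(X)$ with $\|w_x\|_{L^p(X)}+\|w_m\|_{L^p(X)}\le \sfc\|H-\tilde H\|_{L^p(\Omega)}$. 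Inserting this into~\eqref{EQ:Data Diff Sigma} bounds $\|(1-\eta)\,\delta\sigma\,K_I(\tilde u_x)\|_{L^p(\Omega)}$, hence $\|\delta\sigma\,K_I(u_x)\|_{L^p(\Omega)}$, by $\sfC\|H-\tilde H\|_{L^p(\Omega)}$; and putting $H=\tilde H$ forces $w_x=w_m=0$, hence $\delta\sigma=0$ through~\eqref{EQ:Data Diff Sigma} and $K_I(\tilde u_x)\ge\tilde\sfc>0$, which is the uniqueness claim.

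I expect the conservative recasting of the closed system to be the one real obstacle. In contrast with the single-equation situation of Theorem~\ref{THM:Rec Eta}, the feedback coefficient at the excitation wavelength is $\frac{\sigma_{a,x}^\eta}{1-\eta}=\frac{\sigma_{a,xi}}{1-\eta}+\sigma_{a,xf}$, which \emph{strictly exceeds} the absorption part $\sigma_{a,xi}+\sigma_{a,xf}$ of $\sigma_{t,x}$ by $\frac{\eta\sigma_{a,xi}}{1-\eta}$: a naive amalgamation therefore yields a kernel of total mass above one, i.e.\ a mildly \emph{multiplying} system, for which boundedness of the solution operator is not automatic and must be earned. My first attempt would be to cancel this excess against the extra absorption $\frac{\eta\sigma_{a,m}}{1-\eta}K_I(w_m)$ that the same elimination deposits in the emission equation, i.e.\ to treat the full $2\times2$ collision operator rather than its diagonal; if that does not close the estimate, the fallback is a Fredholm alternative argument, since by the averaging lemma the closed system is $(1-\eta)\cI$ plus a compact operator on $L^2(\Omega)\times L^2(\Omega)$ exactly as in Lemma~\ref{LMMA:Fredholm}, so the injectivity just proved promotes to a bounded inverse, uniformity over $\cA$ coming from a continuity/compactness argument in the coefficients. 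The remaining bookkeeping, and the associated iterative reconstruction scheme, is parallel to the $\eta$-case.
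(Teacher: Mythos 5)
Your skeleton coincides with the paper's own proof: the same differences $w_x=u_x-\tilde u_x$, $w_m=u_m-\tilde u_m$, the same data-difference identity (your splitting carries the weight $K_I(\tilde u_x)$ and the coefficient $\sigma_{a,x}^\eta$, the paper's carries $K_I(u_x)$ and $\tilde\sigma_{a,x}^\eta$ as in~\eqref{EQ:Data QfPAT Diff}; these are algebraically equivalent, though since the norm in~\eqref{EQ:Stab sigma} is weighted by $K_I(u_x)$ the paper's splitting is the cleaner one --- your passage from $\delta\sigma\,K_I(\tilde u_x)$ to $\delta\sigma\,K_I(u_x)$ needs a pointwise upper bound on $K_I(\tilde u_x)$, which the hypotheses do not supply for $g_x\in L^p(\Gamma_-)$ with $p<\infty$), the same easy left inequality, and the same elimination of the $\delta\sigma$-term from both equations to arrive at a closed coupled system, which is exactly~\eqref{EQ:ERT IP Sigma II}.

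The genuine gap is the step you yourself flag as ``the one real obstacle'' and then leave open: unique solvability of the closed system together with the a priori bound $\|w_x\|_{L^p(X)}+\|w_m\|_{L^p(X)}\le \sfc\|H-\tilde H\|_{L^p(\Omega)}$, which is the entire content of the right inequality and of the uniqueness claim. You are right that a conservative recasting in the style of Theorem~\ref{THM:Rec Eta} fails, but neither of your fallbacks closes the argument. The paper's resolution is not a cancellation to ``total mass one'': after the change of variable $w_x\to-w_x$ and moving the $K_I(w_m)$-couplings to the left, it verifies the strict positivity of the net cross sections $\Delta_1=\tilde\sigma_{a,x}+(\sigma_{a,m}-\tilde\sigma_{a,x}^\eta)/(1-\eta)$ and $\Delta_2=(\sigma_{a,m}-\eta\sigma_{a,xi})/(1-\eta)$ (both in fact reduce to $(\sigma_{a,m}-\eta\sigma_{a,xi})/(1-\eta)$, so this is a genuine structural condition on the coefficients, not an automatic bookkeeping identity), and then invokes the theory of coupled multigroup transport equations with nondiagonal cross-section matrices~\cite{GrSa-JMP76,Tervo-JMAA07,TeKo-arXiv14,WiMe-JMP86}, with the detailed analysis in~\cite{Ren-Prep15}, to obtain a unique solution in $\cH_p^1(X)\times\cH_p^1(X)$ satisfying~\eqref{EQ:Stab II Sig A}. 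Your Fredholm fallback is moreover circular as a route to uniqueness: Lemma~\ref{LMMA:Fredholm} gives that the relevant operator is $(1-\eta)\cI+\cK$ with $\cK$ compact, but the injectivity you propose to ``promote to a bounded inverse'' is precisely the statement that the homogeneous version of~\eqref{EQ:ERT IP Sigma II} has only the trivial solution, which has not been established at that point; and even granting it, the Fredholm route only yields an $L^2$ estimate, not the $L^p$, $p\in[1,\infty]$, bound claimed in~\eqref{EQ:Stab sigma}, with uniformity over $\cA$ still to be argued. So the architecture is right, but the decisive analytic ingredient is missing.
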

\begin{proof}
Let $(u_x,u_m)$ and $(\tilde u_x,\tilde u_m)$ be solutions to the coupled transport system~\eqref{EQ:ERT QfPAT} with coefficients $(\eta,\sigma_{a,xf})$ and $(\eta,\tilde \sigma_{a,xf})$ respectively. Define $w_x=u_x-\tilde u_x$ and $w_m=u_m-\tilde u_m$. Then we have
\begin{equation}\label{EQ:Data QfPAT Diff}
\frac{H-\tilde H}{\Xi}=\tilde\sigma_{a,x}^\eta K_I(w_x)+\sigma_{a,m}K_I(w_m)+(1-\eta)(\sigma_{a,xf}-\tilde \sigma_{a,xf}) K_I(u_x).
\end{equation}
This leads to the bound
\begin{equation}\label{EQ:Stab I Sig}
\|H-\tilde H\|_{L^p(\Omega)}\le \sfc_1'\| K_I(w_x)\|_{L^p(\Omega)}+\sfc_2'\|K_I(w_m)\|_{L^p(\Omega)}+ \sfc_3'\|(\sigma_{a,xf}-\tilde\sigma_{a,xf})K_I(u_x)\|_{L^p(\Omega)},
\end{equation}
and the bound
\begin{equation}\label{EQ:Stab I Sig A}
\|(\sigma_{a,xf}-\tilde \sigma_{a,xf})K_I(u_x)\|_{L^p(\Omega)}\le \sfc_1''\|H-\tilde H\|_{L^p(\Omega)}+\sfc_2''\| K_I(w_x)\|_{L^p(\Omega)}+\sfc_3''\|K_I(w_m)\|_{L^p(\Omega)}.
\end{equation}
We now verify that $(w_x, w_m)$ solves the following transport system:
\begin{equation}\label{EQ:ERT Stab Sigma}
	\begin{array}{rcll}
  	\bv\cdot\nabla w_x+ \tilde\sigma_{t,x} w_x & = & \sigma_{s,x} K_\Theta(w_x) -(\sigma_{a,xf}-\tilde \sigma_{a,xf}) u_x, &\mbox{in}\ \ X\\
  	\bv\cdot\nabla w_m+ \sigma_{t,m} w_m & = & \sigma_{s,m} K_\Theta(w_m)+\eta\tilde\sigma_{a,xf}K_I(w_x)+\eta(\sigma_{a,xf}-\tilde\sigma_{a,xf})K_I(u_x), &\mbox{in}\ \ X\\
    w_x(\bx,\bv) = 0,&& w_m(\bx,\bv) = 0, & \mbox{on}\ \ \Gamma_-
	\end{array}
\end{equation}
where $\sigma_{t,x}=\sigma_{a,xi}+\tilde\sigma_{a,xf}+\sigma_{s,m}$. We then deduce, following similar procedure as in the proof of Proposition~\ref{PROP:Data QfPAT Frechet}, that 
\begin{equation}\label{EQ:Stab II Sig}
	\|w_x\|_{L^p(X)}+\|w_m\|_{L^p(X)} \le \sfc_4'\|(\sigma_{a,xf}-\tilde\sigma_{a,xf})K_I(u_x)\|_{L^p(\Omega)}.
\end{equation}
The left bound in~\eqref{EQ:Stab sigma} then follows from~\eqref{EQ:Stab I Sig} and~\eqref{EQ:Stab II Sig}.

To derive the right bound in~\eqref{EQ:Stab sigma}, we use~\eqref{EQ:Data QfPAT Diff} to eliminate the quantity $\sigma_{a,xf}-\tilde\sigma_{a,xf}$ in the transport system~\eqref{EQ:ERT Stab Sigma} to obtain:
\begin{equation}\label{EQ:ERT IP Sigma}
	\begin{array}{rcll}
  	\bv\cdot\nabla w_x+ \tilde\sigma_{t,x} w_x & = & \sigma_{s,x} K_\Theta(w_x)+\sigma'_{s,x} K_I(w_x)+\sigma'_{s,xm}K_{I}(w_m)-\frac{(H-\tilde H)u_x}{\Xi(1-\eta)K_I(u_x)}, &\mbox{in}\ \ X\\
  	\bv\cdot\nabla w_m+ \sigma_{t,m} w_m & = & \sigma_{s,m} K_\Theta(w_m)-\sigma'_{s,m}K_{I}(w_m)-\sigma'_{s,mx} K_I(w_x)+\frac{(H-\tilde H)\eta}{\Xi(1-\eta)}, &\mbox{in}\ \ X\\
    w_x(\bx,\bv) = 0,&& w_m(\bx,\bv) = 0, & \mbox{on}\ \ \Gamma_-
	\end{array}
\end{equation}
where $\sigma'_{s,x}=\frac{\tilde\sigma_{a,x}^\eta u_x}{(1-\eta)K_I(u_x)}$, $\sigma_{s,xm}'=\frac{\sigma_{a,m} u_x}{(1-\eta)K_I(u_x)}$, $\sigma'_{s,m}=\frac{\eta\sigma_{a,m}}{1-\eta}$, and $\sigma'_{s,mx}=\frac{\eta\sigma_{a,xi}}{1-\eta}$. To write the system in standard form, we perform the change of variable $w_x\to -w_x$. We then have
\begin{equation}\label{EQ:ERT IP Sigma II}
	\begin{array}{rcll}
  	\bv\cdot\nabla w_x+ \tilde\sigma_{t,x} w_x +\sigma'_{s,xm}K_{I}(w_m) & = & \sigma_{s,x} K_\Theta(w_x)+\sigma'_{s,x} K_I(w_x)+\frac{(H-\tilde H)u_x}{\Xi(1-\eta)K_I(u_x)}, &\mbox{in}\ \ X\\
  	\bv\cdot\nabla w_m+ \sigma_{t,m} w_m +\sigma'_{s,m}K_{I}(w_m)& = & \sigma_{s,m} K_\Theta(w_m)+\sigma'_{s,mx} K_I(w_x)+\frac{(H-\tilde H)\eta}{\Xi(1-\eta)}, &\mbox{in}\ \ X\\
    w_x(\bx,\bv) = 0,&& w_m(\bx,\bv) = 0, & \mbox{on}\ \ \Gamma_-
	\end{array}
\end{equation}
With the assumption on $g_x$, the coefficients $\sigma'_{s,x}$, $\sigma_{s,xm}'$, $\sigma'_{s,m}$, and $\sigma'_{s,mx}$ are all positive. We check also, after using the assumption $u_x=K_I(u_x)$, that $\Delta_1\equiv \tilde\sigma_{t,x}+\sigma_{s,xm}'-\sigma_{s,x}-\sigma_{s,x}'=\tilde\sigma_{a,x}+(\sigma_{a,m}-\tilde\sigma_{a,x}^\eta)/[(1-\eta)]$ and $\Delta_2\equiv \sigma_{t,m}+\sigma_{s,m}'-\sigma_{s,m}-\sigma_{s,mx}'=(\sigma_{a,m}-\eta\sigma_{a,xi})/(1-\eta)$. The conditions in Theorem~\ref{THM:Sigma} ensure that $\Delta_1, \Delta_2\ge \sfc'>0$ for some $\sfc'$. We can therefore combine the techniques in~\cite{GrSa-JMP76,Tervo-JMAA07,TeKo-arXiv14,WiMe-JMP86}, see detailed analysis in~\cite{Ren-Prep15}, to show that system~\eqref{EQ:ERT IP Sigma II} admits a unique solution $(w_x, w_m)$ that satisfies
\begin{equation}\label{EQ:Stab II Sig A}
	\|w_x\|_{L^p(X)}+\|w_m\|_{L^p(X)} \le \sfc_4''\|H-\tilde H\|_{L^p(\Omega)}.
\end{equation}
We can now combine ~\eqref{EQ:Stab I Sig A} and~\eqref{EQ:Stab II Sig A} to obtain the right bound in~\eqref{EQ:Stab sigma}. The uniqueness result follows from the fact that~\eqref{EQ:ERT IP Sigma II} admits only the trivial solution $(w_x, w_m)=(0, 0)$ when $H=\tilde H$.
\end{proof}

\paragraph{Linearized Case.} Unlike in the case of reconstructing $\eta$, the above proof is not constructive since the unknown coefficient $\sigma_{a,xf}$ shows up in the transport system~\eqref{EQ:ERT IP Sigma II}. Therefore, the proof does not provide directly a reconstruction algorithm. For numerical reconstructions in this nonlinear setting, we use the optimization-based algorithm in Section~\ref{SUBSEC:Nonl Rec}. If we consider the same problem in linearized setting, we can indeed derive an explicit reconstruction procedure. To do that, we replace the $\delta\sigma_{a,xf}$ in~\eqref{EQ:ERT QfPAT Linearized Sigma} with its expression given in the linearized datum~\eqref{EQ:Data QfPAT Linearized Sigma} to get the following system:
\begin{equation}\label{EQ:ERT IP Sigma Lin}
\begin{array}{rcll}
	\bv\cdot\nabla v_x+ \sigma_{t,x} v_x +\sigma_{s,xm}'K_I(v_m) & = & \sigma_{s,x} K_\Theta(v_x)+\sigma_{s,x}'K_I(v_x)-\frac{u_x H_\sigma'}{(1-\eta)\Xi K_I(u_x)}, &\mbox{in}\ \ X\\
  	\bv\cdot\nabla v_m+ \sigma_{t,m} v_m +\sigma_{s,m}' K_I(v_m) & = & \sigma_{s,m} K_\Theta(v_m) +\sigma'_{s,mx} K_I(v_x) + \frac{\eta H_\sigma'}{(1-\eta)\Xi}, &\mbox{in}\ \ X\\
    v_x(\bx,\bv) = 0,&& v_m(\bx,\bv) = 0, & \mbox{on}\ \ \Gamma_-
	\end{array}
\end{equation}
where we have performed the change of variable $v_x\to -v_x$, and the coefficient $\sigma'_{s,x}=\frac{\sigma_{a,x}^\eta u_x}{(1-\eta)K_I(u_x)}$, while the coefficients $\sigma_{s,xm}'$, $\sigma'_{s,m}$, and $\sigma'_{s,mx}$ are defined as in~\eqref{EQ:ERT IP Sigma}. This system does not contain the unknown coefficient $\delta\sigma_{a,xf}$. It can be solved for $(v_x, v_m)$. We can then reconstruct $\delta\sigma_{a,xf}$ following~\eqref{EQ:Data QfPAT Linearized Sigma}. The reconstruction procedure can be summarized into the following reconstruction algorithm.\vskip 2mm
\noindent{\bf Reconstruction Algorithm II.}
\begin{itemize}
	\item[S1.] Given the background coefficient $\sigma_{a,xf}$, solve the first transport equation in~\eqref{EQ:ERT QfPAT} with the boundary condition $g_x$ for $u_x$ (and therefore $K_I(u_x)$);
	\item[S2.] Evaluate the coefficients $\sigma'_{s,x}$, $\sigma_{s,xm}'$, $\sigma'_{s,m}$ and $\sigma'_{s,mx}$;
	\item[S3.] Solve the transport system~\eqref{EQ:ERT IP Sigma Lin} for $(v_x, v_m)$ and perform the transform $(-v_x, v_m)\to(v_x, v_m)$;
	\item[S4.] Reconstruct $\delta\sigma_{a,xf}$ as $\big[\dfrac{H_\sigma'}{\Xi}-\sigma_{a,x}^\eta K_I(v_x)-\sigma_{a,m} K_I(v_m)\big]/\big[(1-\eta)K_I(u_x)\big]$.
\end{itemize}
Following the control theory for transport equations developed in~\cite{Acosta-IP13,BaAg-ESAIM00,KlYa-SIAM07}, we can show, under reasonable assumptions, the existence of sources $g_x$ such that $u_x=K_I(u_x)$ holds for each pair $(\eta,\sigma_{a,xf})\in\cA$. Such sources, however, might be complicated, for instance we might need to solve a control problem, to construct in practical applications. The usefulness of Reconstruction Algorithm II is therefore limited by this fact. Note that in applications where the medium is scattering-free, see for instance discussions in~\cite{ElScSc-IP12,MaRe-CMS14}, this algorithm is indeed very useful since there are many ways to construct illuminations sources to have $u_x=K_I(u_x)$.
 
\section{Simultaneous Reconstruction of Two Coefficients}
\label{SEC:Two Coeff}

We now consider the problem of simultaneous reconstruction of the quantum efficiency and the fluorescence absorption coefficient. We start with the linearized case.

\subsection{Linearization around $(\eta,\sigma_{a,xf})=(0,0)$}
\label{SUBSEC:Lin 0 0}

We first consider the special case where both coefficients are small. In this case the product of the coefficient is small so that generation of fluorescence is very small and can be neglected. Therefore, the system involves only the light at the excitation wavelength. The QfPAT problem reduces to the usual quantitative PAT problem. To be precise, we linearize the problem around the background $(\eta,\sigma_{a,xf})=(0,0)$. Then the second transport equation in~\eqref{EQ:ERT QfPAT Linearized} has the solution $v_m=0$. Therefore, the datum~\eqref{EQ:Data QfPAT Linearized} simplifies to
\begin{equation}\label{EQ:Data QfPAT Pert 0 0}
	\frac{1}{\Xi} H'[0,0](\delta\eta,\delta\sigma_{a,xf})=\delta\sigma_{a,xf} K_I(u_{x})+\sigma_{a,xi}K_I(v_x),
\end{equation}
and the first transport equation in system~\eqref{EQ:ERT QfPAT Linearized} simplifies to
\begin{equation}\label{EQ:ERT QfPAT Linearized A 0 0}
	\begin{array}{rcll}
  	\bv\cdot\nabla v_x+ (\sigma_{a,xi}+\sigma_{s,x}) v_x & = & 
		\sigma_{s,x} K_\Theta(v_x)-\delta\sigma_{a,xf} u_x, &\mbox{in}\ \ X \\
    	v_x(\bx,\bv) & =& 0, & \mbox{on}\ \ \Gamma_-.
	\end{array}
\end{equation}
We observe that $\delta\eta$ does not appear in the datum~\eqref{EQ:Data QfPAT Pert 0 0} or the equation~\eqref{EQ:ERT QfPAT Linearized A 0 0}. Therefore, it can \emph{not} be reconstructed in this setting. We can show the following result.
\begin{proposition}\label{PROP:Unique Sigma Linear}
Let $u_x$ be the solution to the first transport equation in~\eqref{EQ:ERT QfPAT} with $\sigma_{a,xf}=0$. Let $g_x\in L^p(\Gamma_-)$ ($p\in[1,\infty]$) be such that $u_x=K_I(u_x) \ge \tilde \sfc> 0$. Denote by $H'[0, 0]$ and $\widetilde{H}'[0, 0]$ the perturbed data sets in the form of ~\eqref{EQ:Data QfPAT Pert 0 0}, generated with perturbed coefficients ($\delta\eta$, $\delta\sigma_{a,xf}$) and ($\widetilde{\delta\eta}$, $\widetilde{\delta\sigma}_{a,xf}$) respectively. Then $H'[0, 0]=\widetilde{H}'[0, 0]$ a.e. implies $\delta\sigma_{a,xf}=\widetilde{\delta\sigma}_{a,xf}$ a.e.. In addition, we have,
\begin{equation}\label{EQ:Stab Sigma Pert 0 0}
	\sfc\|H'[0, 0]-\widetilde{H}'[0, 0]\|_{L^p(\Omega)}\le \|(\delta\sigma_{a,xf}-\widetilde{\delta\sigma}_{x,f}) K_I(u_x)\|_{L^p(\Omega)}\le \sfC\|H'[0, 0]-\widetilde{H}'[0, 0]\|_{L^p(\Omega)},
\end{equation}
with $\sfc$ and $\sfC$ constants that depend on $\Omega$, $\Xi$, $\sigma_{a,xi}$ and $\sigma_{s,x}$.
\end{proposition}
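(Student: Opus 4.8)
The plan is to follow the proof of Theorem~\ref{THM:Sigma}, which collapses to a much simpler argument in this linearized, single-equation setting. Let $v_x$ and $\tilde v_x$ denote the solutions of~\eqref{EQ:ERT QfPAT Linearized A 0 0} associated respectively with $\delta\sigma_{a,xf}$ and $\widetilde{\delta\sigma}_{a,xf}$, and put $w_x=v_x-\tilde v_x$, $\delta=\delta\sigma_{a,xf}-\widetilde{\delta\sigma}_{a,xf}$, and $D=\frac1\Xi\bigl(H'[0,0]-\widetilde H'[0,0]\bigr)$. Crucially, $u_x$ is a fixed background quantity here (it solves the first equation of~\eqref{EQ:ERT QfPAT} with $\sigma_{a,xf}=0$) and does not depend on the perturbations, so subtracting the two copies of~\eqref{EQ:ERT QfPAT Linearized A 0 0} gives the clean linear transport equation
\begin{equation*}
  \bv\cdot\nabla w_x+(\sigma_{a,xi}+\sigma_{s,x})w_x=\sigma_{s,x}K_\Theta(w_x)-\delta\,u_x,\qquad w_x|_{\Gamma_-}=0,
\end{equation*}
while subtracting the two copies of~\eqref{EQ:Data QfPAT Pert 0 0} gives the algebraic identity $D=\delta\,K_I(u_x)+\sigma_{a,xi}K_I(w_x)$.

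For the left inequality in~\eqref{EQ:Stab Sigma Pert 0 0}, I would bound $\|H'[0,0]-\widetilde H'[0,0]\|_{L^p(\Omega)}$ from this identity by $\|\delta K_I(u_x)\|_{L^p(\Omega)}$ plus a multiple of $\|K_I(w_x)\|_{L^p(\Omega)}\le c\|w_x\|_{L^p(X)}$; since $\sigma_{a,xi}\ge\sfc_1>0$ the $w_x$-equation is genuinely absorbing, and the standard transport estimate (as in Lemma~\ref{LMMA:ERT Stab}) yields $\|w_x\|_{L^p(X)}\le\sfc\|\delta\,u_x\|_{L^p(X)}$, which is equivalent to $\|\delta K_I(u_x)\|_{L^p(\Omega)}$ because $u_x=K_I(u_x)$ is independent of $\bv$. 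For the right inequality and for uniqueness, the key step is to use the data identity to remove the unknown: since $u_x=K_I(u_x)$ we have $\delta\,u_x=\delta K_I(u_x)=D-\sigma_{a,xi}K_I(w_x)$, and inserting this into the $w_x$-equation gives
\begin{equation*}
  \bv\cdot\nabla w_x+(\sigma_{a,xi}+\sigma_{s,x})w_x=\sigma_{s,x}K_\Theta(w_x)+\sigma_{a,xi}K_I(w_x)-D,\qquad w_x|_{\Gamma_-}=0.
\end{equation*}
Exactly as in the passage from~\eqref{EQ:ERT IP eta} to~\eqref{EQ:ERT Eta Rec F}, the two collision terms combine into the single conservative operator $(\sigma_{a,xi}+\sigma_{s,x})K_{\widetilde\Theta}(w_x)$ with $\widetilde\Theta=\frac{\sigma_{a,xi}}{\sigma_{a,xi}+\sigma_{s,x}}+\frac{\sigma_{s,x}}{\sigma_{a,xi}+\sigma_{s,x}}\Theta$, which is symmetric, bounded, positive, and normalized in the sense of~\eqref{EQ:Theta Properties}. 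Because $\Omega$ is bounded, classical conservative transport theory (e.g.~\cite[Theorem 1 on page 337]{DaLi-Book93-6}) provides a unique $w_x\in\cH_p^1(X)$ with $\|w_x\|_{L^p(X)}\le\sfc\|D\|_{L^p(\Omega)}\le\sfc'\|H'[0,0]-\widetilde H'[0,0]\|_{L^p(\Omega)}$; feeding this back into $\delta K_I(u_x)=D-\sigma_{a,xi}K_I(w_x)$ gives the right bound, and taking $D=0$ forces $w_x=0$, hence $\delta K_I(u_x)=0$, whence $\delta\sigma_{a,xf}=\widetilde{\delta\sigma}_{a,xf}$ a.e.\ since $K_I(u_x)\ge\tilde\sfc>0$.

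The only point that genuinely needs care — the ``hard part'' — is the well-posedness and $L^p$-stability of the \emph{conservative} transport equation for $w_x$: it is not an absorbing equation, so it is not covered by a Neumann-series/fixed-point argument and one must invoke the dedicated theory for conservative media on bounded domains (already used for~\eqref{EQ:ERT Eta Rec F}). Everything else is a direct rearrangement of the linearized relations; in particular the argument is constructive and is precisely Reconstruction Algorithm~II specialized to the background $(\eta,\sigma_{a,xf})=(0,0)$, with the emission equation absent, so no separate iterative scheme is required here.
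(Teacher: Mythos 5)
Your proposal is correct and follows essentially the same route as the paper: the same absorbing-equation estimate yields the left bound, and the same elimination of the unknown via the data identity (using $u_x=K_I(u_x)$) produces the conservative transport equation whose well-posedness gives the right bound and uniqueness. The only cosmetic differences are that you phrase everything in terms of the difference $w_x=v_x-\tilde v_x$ and explicitly recombine the collision terms into $K_{\widetilde\Theta}$ as in the proof of Theorem~\ref{THM:Rec Eta}, which the paper leaves implicit here.
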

\begin{proof}
The datum~\eqref{EQ:Data QfPAT Pert 0 0} implies directly that
\begin{equation}\label{EQ:Stab Sigma I}
	\|H'[0, 0]-\widetilde{H}'[0, 0]\|_{L^p(\Omega)} \le \sfc_1\|(\delta\sigma_{a,xf}-\widetilde{\delta\sigma}_{a,xf}) K_I(u_x)\|_{L^p(\Omega)} + \sfc_2 \|v_x-\tilde v_x\|_{L^p(X)},
\end{equation}
and
\begin{equation}\label{EQ:Stab Sigma I A}
	\|(\delta\sigma_{a,xf}-\widetilde{\delta\sigma}_{a,xf}) K_I(u_x)\|_{L^p(\Omega)} \le \sfc_1'\|H'[0, 0]-\widetilde{H}'[0, 0]\|_{L^p(\Omega)} + \sfc_2'\|v_x-\tilde v_x\|_{L^p(X)},
\end{equation}
with the constants depend on $\Omega$, $\sigma_{a,xi}$ and $\Xi$.

With the assumptions in the theorem, we deduce from the transport equation~\eqref{EQ:ERT QfPAT Linearized A 0 0} that
\begin{equation}\label{EQ:Stab Sigma II}
	\|v_x-\tilde v_x\|_{L^p(\Omega)}\le \sfc_3 \|(\delta\sigma_{a,xf}-\widetilde{\delta\sigma}_{a,xf}) K_I(u_x)\|_{L^p(\Omega)}.
\end{equation}
The left bound in~\eqref{EQ:Stab Sigma Pert 0 0} then follows from~\eqref{EQ:Stab Sigma I} and~\eqref{EQ:Stab Sigma II}. To get the right bound in~\eqref{EQ:Stab Sigma Pert 0 0}, we use the datum~\eqref{EQ:Data QfPAT Pert 0 0}, and the assumption that $u_x=K_I(u_x)$, to rewrite~\eqref{EQ:ERT QfPAT Linearized A 0 0} as
\begin{equation}\label{EQ:Diff QfPAT Pert 0 0 Rec}
	\begin{array}{rcll}
  	\bv\cdot\nabla v_x+ (\sigma_{a,xi}+\sigma_{s,x}) v_x & = & 
		\sigma_{s,x} K_\Theta(v_x)+\sigma_{a,xi}K_I(v_x)-\frac{H'[0,0]}{\Xi}, &\mbox{in}\ \ X \\
    	v_x(\bx,\bv) & =& 0, & \mbox{on}\ \ \Gamma_-.
	\end{array}
\end{equation}
This is a conservative transport equation that admits a unique solution with the stability result:
\begin{equation}\label{EQ:Stab Sigma II A}
	\|v_x-\tilde v_x\|_{L^p(\Omega)}\le \sfc_3' \|H'[0, 0]-\widetilde{H}'[0, 0]\|_{L^p(\Omega)},
\end{equation}
where $\sfc_3'$ depends on $\Omega$, $\sigma_{a,xi}$, $\sigma_{s,x}$ and $\Xi$. The right bound in~\eqref{EQ:Stab Sigma Pert 0 0} then follows from~\eqref{EQ:Stab Sigma I A} and ~\eqref{EQ:Stab Sigma II A}.
\end{proof}
The above proof is again constructive when a $g_x$ that satisfies the assumption in the theorem is available to us, in the sense that we only need to solve~\eqref{EQ:Diff QfPAT Pert 0 0 Rec} for $v_x$ and then compute $\delta\sigma_{a,xf}= (H'[0, 0]/\Xi-\sigma_{a,xi} K_I(v_x))/K_I(u_x)$.

\subsection{Linearization around a general background}
\label{SUBSEC:Lin Eta Sigma}

We now consider the linearization around a general background $(\eta\not\equiv 0, \sigma_{a,xf}\not\equiv 0)$. We study the case where we have $J\ge 2$ data sets, $1\le j\le J$:
\begin{multline}\label{EQ:Data QfPAT Pert}
\frac{H_j'[\eta,\sigma_{a,xf}](\delta\eta,\delta\sigma_{a,xf})}{\Xi K_I(u_{x}^j)} = (-\delta\eta\sigma_{a,xf}+(1-\eta)\delta\sigma_{a,xf})  +\frac{\sigma_{a,x}^\eta}{K_I(u_{x}^j)} K_I(v_x^j)+\frac{\sigma_{a,m}}{K_I(u_{x}^j)} K_I(v_{m}^j)
\end{multline}
where $u_x^j$ is the solution to the first transport equation in~\eqref{EQ:ERT QfPAT} with background coefficient $\sigma_{a,xf}$ and illumination source $g_x^j$, while $(v_x^j, v_m^j)$ is the solution to the coupled system~\eqref{EQ:ERT QfPAT Linearized}.

To study the linear inverse problem defined in~\eqref{EQ:Data QfPAT Pert}, we introduce two new variables $\zeta=\delta\eta\sigma_{a,xf}+\eta \delta\sigma_{a,xf}$ and $\xi=\delta \sigma_{a,xf}$. It is straightforward to verify that $(\zeta, \xi)$ uniquely determines $(\delta\eta, \delta\sigma_{a,xf})$ when $\eta\not\equiv 0$ and $\sigma_{a,xf}\not\equiv 0$. We can then collect the $J$ data sets to have the following linear system for the unknown coefficient pair $(\zeta, \xi)$:
\begin{equation}\label{EQ:Lin MATRIX}
\bPi\left (
	\begin{array}{c}
	\zeta\\
	\xi
	\end{array}\right)
	=\bz,\quad \mbox{with},\quad
	\bPi=\left (
	\begin{array}{cc}
	-\cI + \Pi_\zeta^1 & \cI-\Pi_\xi^1\\
	\vdots & \vdots\\
	-\cI + \Pi_\zeta^J & \cI-\Pi_\xi^J
	\end{array}\right)
	\quad\mbox{and}\quad
	\bz
	=
	\left (
	\begin{array}{c}
	\frac{H_1'[\eta,\sigma_{a,xf}]}{\Xi K_I(u_x^1)}\\
	\vdots\\
	\frac{H_J'[\eta,\sigma_{a,xf}]}{\Xi K_I(u_x^J)}
	\end{array}\right)
\end{equation}
with $\Pi_\zeta^j=\frac{\sigma_{a,m}}{K_I(u_x^j)} \Lambda_m^j$ and $\Pi_\xi^j=\frac{\sigma_{a,x}^\eta}{K_I(u_{x}^j)} \Lambda_x^j + \frac{\sigma_{a,m}}{K_I(u_{x}^j)} \Lambda_{mx}^j$. Here $\Lambda_x^j$, $\Lambda_{mx}^j$ and $\Lambda_m^j$ are defined as in~\eqref{EQ:Lambda1} and ~\eqref{EQ:Lambda2} with $u_x$ replaced by $u_x^j$. From Lemma~\ref{LMMA:Fredholm} we know that $\Pi_\zeta^j$ and $\Pi_\xi^j$ ($1\le j\le J$) are compact operators on $L^2(\Omega)$.

From the discussion in the previous sections, we know that $\cI-\Pi_\zeta^j$ and $\cI-\Pi_\xi^j$ are all invertible for well-selected illumination sources $g_x^j$, $1\le j\le J$. However, that does not guarantee the invertibility of the linear system~\eqref{EQ:Lin MATRIX}. For the case of $J=2$, the invertibility of the system~\eqref{EQ:Lin MATRIX} is equivalent to the invertibility of $(\cI-\Pi_\zeta^2)^{-1}(\cI-\Pi_\xi^2)-(\cI-\Pi_\zeta^1)^{-1}(\cI-\Pi_\xi^1)$. Therefore, we need to choose illumination sources $g_x^1$ and $g_x^2$ such that $(\cI-\Pi_\zeta^2)^{-1}(\cI-\Pi_\xi^2)-(\cI-\Pi_\zeta^1)^{-1}(\cI-\Pi_\xi^1)$ is invertible; see next section for some discussions on the regularized version of this problem.

\subsection{A partially linearized model}
\label{SUBSEC:Lin Eta 0}

We now briefly discuss a very popular simplification of the mathematical model in the fluorescence optical tomography literature. This simplification assumes that the fluorescence absorption coefficient $\sigma_{a,xf}$ is small compared to the background tissue absorption coefficient $\sigma_{a,xi}$. Therefore, it can be dropped from the first equation in the model~\eqref{EQ:ERT QfPAT}; see for instance~\cite{PaWaScMa-OL08}. In other words, the model, for source $g_x^j$ ($1\le j\le J$), now reads,
\begin{equation}\label{EQ:ERT QfPAT Partial L}
	\begin{array}{rcll}
  	\bv\cdot\nabla u_x^j+ (\sigma_{a,xi}+ \sigma_{s,x}) u_x^j & = & \sigma_{s,x} K_\Theta(u_x^j), &\mbox{in}\ \ X\\
  	\bv\cdot\nabla u_m^j+ (\sigma_{a,m}+\sigma_{s,m}) u_m^j & = & \sigma_{s,m} K_\Theta(u_m^j) +\eta \sigma_{a,xf} K_I(u_x^j), &\mbox{in}\ \ X\\
    u_x^j(\bx,\bv) = g_x^j,&& u_m^j(\bx,\bv) = 0 & \mbox{on}\ \ \Gamma_-.
	\end{array}
\end{equation}
The data, for source $g_x^j$ ($1\le j\le J$), now simplify to,
\begin{equation}\label{EQ:Data QfPAT Pert Eta 0}
	\widetilde H_j\equiv \frac{H_j}{\Xi K_I(u_x^j)}-\sigma_{a,xi}=(1-\eta)\sigma_{a,xf} +\frac{\sigma_{a,m}}{K_I(u_x^j)} K_I(u_{m}^j).
\end{equation}

The inverse problem of reconstructing $\eta$ and $\sigma_{a,xf}$ from datum~\eqref{EQ:Data QfPAT Pert Eta 0} is a nonlinear problem despite the fact that a partial linearization has been performed on the transport model. However, if we define $\zeta=(1-\eta) \sigma_{a,xf}$ and $\xi=\sigma_{a,xf}$, then the inverse problem is bilinear with respect to $(\zeta, \xi)$. Precisely, we can write the datum as,
\begin{equation}\label{EQ:Data QfPAT Pert Eta 0 j}
	\widetilde H_j= \zeta-\Pi_\zeta^j(\zeta) + \Pi_\zeta^j(\xi), \qquad 1\le j\le J
\end{equation}
with $\Pi_\zeta^j=\frac{\sigma_{a,m}}{K_I(u_x^j)}\Lambda_m^j$ defined the same way as before and being compact on $L^2(\Omega)$. This can again be written into the form of linear system~\eqref{EQ:Lin MATRIX} with the coefficient matrix and source vector respectively
\begin{equation}\label{EQ:Lin MATRIX II}
	\bPi=\left (
	\begin{array}{cc}
		\cI-\Pi_\zeta^1 & \Pi_\zeta^1 \\
		\vdots & \vdots\\
		\cI-\Pi_\zeta^J & \Pi_\zeta^J 
	\end{array}\right), \qquad \mbox{and},\qquad 
	\bz=\left (
	\begin{array}{c}
		\widetilde H_1 \\
		\vdots\\
		\widetilde H_J 
	\end{array}\right).
\end{equation}

\paragraph{Regularized Inversion with $J=2$.} In the case that two data sets are available, we can solve the inverse problems in this section and Section.~\ref{SUBSEC:Lin Eta Sigma} in regularized form. To do that, we observe that if we define 
\begin{equation}
\bPi_\alpha =\bPi+
	\left (
	\begin{array}{cc}
		0 & 0 \\
		0 & \alpha \cI 
	\end{array}\right), \qquad \alpha>0
\end{equation}
then $\bPi_\alpha$ is a Fredholm operator on $L^2(\Omega) \times L^2(\Omega)$ for the $\bPi$ defined in both~\eqref{EQ:Lin MATRIX} and ~\eqref{EQ:Lin MATRIX II}. To be precise, $\bPi_\alpha$ are respectively,
\begin{multline}
\bPi_\alpha=
	\left (
	\begin{array}{cc}
	-\cI + \Pi_\zeta^1 & \cI-\Pi_\xi^1\\
	-\cI + \Pi_\zeta^2 & \alpha\cI+\cI-\Pi_\xi^2
	\end{array}\right)
	\sim
	\left (
	\begin{array}{cc}
	-\cI + \Pi_\zeta^1 & \cI-\Pi_\xi^1\\
	\Pi_\zeta^2-\Pi_\zeta^1 & \alpha\cI+\Pi_\xi^1-\Pi_\xi^2
	\end{array}\right) \\
	=
	\left (
	\begin{array}{cc}
	-\cI + \Pi_\zeta^1 & \cI\\
	0 & \alpha\cI+\Pi_\xi^1
	\end{array}\right)
	+
	\left (
	\begin{array}{cc}
	 0 & -\Pi_\xi^1\\
	\Pi_\zeta^2-\Pi_\zeta^1 & -\Pi_\xi^2
	\end{array}\right),
\end{multline}
and
\begin{equation}
\bPi_\alpha=
	\left (
	\begin{array}{cc}
	\cI - \Pi_\zeta^1 & \Pi_\zeta^1\\
	\cI - \Pi_\zeta^2 & \alpha\cI+\Pi_\zeta^2
	\end{array}\right)
	=
	\left (
	\begin{array}{cc}
	\cI-\Pi_\zeta^1 & 0\\
	\cI & \alpha\cI
	\end{array}\right)
	+
	\left (
	\begin{array}{cc}
	0 & \Pi_\zeta^1\\
	-\Pi_\zeta^2 & \Pi_\zeta^2
	\end{array}\right)
\end{equation}
where $\sim$ is used to denote the elementary operation of subtracting the first row from the second row. For any fixed $\alpha>0$, let us denote by ${\cN}(\bPi_\alpha)$ the null space of matrix operator $\bPi_\alpha$, then the following result follows immediately from classical stability theory of Fredholm operators~\cite{Kato-Book13}.
\begin{proposition}\label{THM:Unique Two}
	Let $\bz$ and $\widetilde{\bz}$ be two perturbed data sets defined as in ~\eqref{EQ:Lin MATRIX} or~\eqref{EQ:Lin MATRIX II}. Let $(\zeta, \xi)^t$ and $(\widetilde \zeta, \widetilde \xi)^t$ be the solution to $\bPi_\alpha \left (\begin{array}{c} \zeta\\ \xi \end{array}\right) =\bz$ and $\bPi_\alpha \left (\begin{array}{c} \widetilde\zeta\\ \widetilde \xi \end{array}\right) =\widetilde\bz$ respectively for some $\alpha>0$. Then we have
\begin{equation}
	\tilde\sfc\|\bz-\widetilde{\bz}\|_{(L^2(\Omega))^2}\le \|(\zeta, \xi)-(\widetilde\zeta, \widetilde\xi) \|_{(L^2(\Omega))^2/{\cN}(\bPi_\alpha)}\le \tilde\sfC\|\bz-\widetilde\bz\|_{(L^2(\Omega))^2}.
\end{equation}
for some constants $\tilde\sfc$ and $\tilde\sfC$.
\end{proposition}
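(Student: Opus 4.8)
The plan is to read Proposition~\ref{THM:Unique Two} as a direct consequence of the fact that, for each fixed $\alpha>0$, the operator $\bPi_\alpha$ is a bounded \emph{Fredholm} operator of index zero on the Hilbert space $\cX \equiv L^2(\Omega)\times L^2(\Omega)$. Once this is in hand, the two-sided estimate is the standard stability property of Fredholm maps (see \cite{Kato-Book13}): a bounded Fredholm operator is bounded below on the orthogonal complement of its kernel, and therefore induces a topological isomorphism from $\cX/\cN(\bPi_\alpha)$ onto its (closed) range, with the inequality constants $\tilde\sfc$, $\tilde\sfC$ being the norm and the norm of the (pseudo)inverse of that induced isomorphism. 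So the whole content of the proof is reduced to verifying the Fredholm property and then invoking this abstract principle.

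First I would establish the Fredholm property of $\bPi_\alpha$. For this I use exactly the block decompositions already displayed in the excerpt just before the statement. In the case of the matrix $\bPi$ from~\eqref{EQ:Lin MATRIX}, after the elementary row operation (subtracting the first row from the second) one writes $\bPi_\alpha$ as the sum of a block triangular operator $\begin{pmatrix} -\cI+\Pi_\zeta^1 & \cI \\ 0 & \alpha\cI+\Pi_\xi^1\end{pmatrix}$ plus a second operator all of whose entries are of the form $\pm\Pi_\zeta^j$ or $\pm\Pi_\xi^j$. By Lemma~\ref{LMMA:Fredholm} every $\Pi_\zeta^j$ and $\Pi_\xi^j$ is compact on $L^2(\Omega)$, so the second operator is compact on $\cX$; and the triangular operator is Fredholm of index zero because its diagonal blocks $-\cI+\Pi_\zeta^1$ and $\alpha\cI+\Pi_\xi^1$ are each of the form (scalar multiple of the identity) $+$ (compact), hence Fredholm of index zero — here I record that $\alpha>0$ guarantees the invertibility is at least of the harmless ``$\cI+$ compact'' type, while for $-\cI+\Pi_\zeta^1$ the relevant point is only that $-1\neq 0$. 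A sum of a Fredholm operator and a compact operator is Fredholm of the same index, so $\bPi_\alpha$ is Fredholm of index zero. The case of~\eqref{EQ:Lin MATRIX II} is identical: the displayed decomposition exhibits $\bPi_\alpha$ as $\begin{pmatrix}\cI-\Pi_\zeta^1 & 0\\ \cI & \alpha\cI\end{pmatrix}$ plus a manifestly compact remainder, and the triangular part is Fredholm of index zero since $\cI-\Pi_\zeta^1$ and $\alpha\cI$ both are (again $\alpha>0$ is what keeps the $(2,2)$ block invertible). Note that these elementary row operations are implemented by multiplication on the left by bounded invertible block operators, so they preserve the Fredholm property and index.

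Next I would deduce the two-sided estimate. Since $\bPi_\alpha$ is Fredholm, its range $\cR(\bPi_\alpha)$ is closed, its kernel $\cN(\bPi_\alpha)$ is finite-dimensional, and $\bPi_\alpha$ descends to an injective bounded operator $\overline{\bPi_\alpha}:\cX/\cN(\bPi_\alpha)\to\cR(\bPi_\alpha)$ which is surjective by construction; by the open mapping (bounded inverse) theorem $\overline{\bPi_\alpha}$ is a topological isomorphism. Hence there exist constants $\tilde\sfc,\tilde\sfC>0$ with
\begin{equation}
\tilde\sfc\,\|w\|_{\cX/\cN(\bPi_\alpha)} \le \|\bPi_\alpha w\|_{\cX} \le \tilde\sfC\,\|w\|_{\cX/\cN(\bPi_\alpha)}
\end{equation}
for every $w$, where on the left $w$ denotes any representative and the quotient norm is used. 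Applying this with $w=(\zeta,\xi)-(\widetilde\zeta,\widetilde\xi)$ and using $\bPi_\alpha(\zeta,\xi)^t=\bz$, $\bPi_\alpha(\widetilde\zeta,\widetilde\xi)^t=\widetilde\bz$, so that $\bPi_\alpha w = \bz-\widetilde\bz$, yields precisely the claimed inequality
\begin{equation}
\tilde\sfc\|\bz-\widetilde{\bz}\|_{(L^2(\Omega))^2}\le \|(\zeta,\xi)-(\widetilde\zeta,\widetilde\xi)\|_{(L^2(\Omega))^2/\cN(\bPi_\alpha)}\le \tilde\sfC\|\bz-\widetilde\bz\|_{(L^2(\Omega))^2},
\end{equation}
with $\tilde\sfc=1/\tilde\sfC_{\mathrm{iso}}$ and $\tilde\sfC=1/\tilde\sfc_{\mathrm{iso}}$ in terms of the isomorphism bounds above; the constants of course depend on $\alpha$ through $\bPi_\alpha$.

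The only genuinely non-routine point — the ``hard part'' — is the verification that $\bPi_\alpha$ really is Fredholm, i.e. that the diagonal blocks after triangularization are invertible modulo compacts and that the row operations are legitimate bounded invertible manipulations; everything else is bookkeeping plus the cited abstract theory. In particular one must be careful that the displayed ``$\sim$'' really is realized by a bounded invertible left multiplier (it is: the unipotent lower-triangular block matrix $\begin{pmatrix}\cI & 0\\ -\cI & \cI\end{pmatrix}$), and that the compactness input for the $\Pi^j$'s is exactly what Lemma~\ref{LMMA:Fredholm} supplies. I would also remark that the role of $\alpha>0$ is precisely to regularize the $(2,2)$ block into something of the form $\alpha\cI+{}$compact (which is Fredholm of index zero for all $\alpha$, invertible for all but countably many $\alpha$), since without it the simultaneous two-coefficient problem need not be Fredholm, as discussed in Section~\ref{SUBSEC:Lin Eta Sigma}.
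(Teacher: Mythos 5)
Your proof is correct and follows essentially the same route the paper intends: Fredholmness of $\bPi_\alpha$ obtained from the displayed block decompositions together with the compactness of the $\Pi_\zeta^j$, $\Pi_\xi^j$ from Lemma~\ref{LMMA:Fredholm}, and then the standard closed-range/quotient-isomorphism estimate for Fredholm operators cited from~\cite{Kato-Book13}. The paper asserts this ``follows immediately'' without detail, and your write-up is simply a careful expansion of that same one-line argument.
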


In the numerical computation, to solve ~\eqref{EQ:Lin MATRIX} or ~\eqref{EQ:Lin MATRIX II} directly, we have to construct the operator $\bPi$ explicitly. This is hard to do in practice since it essentially requires the analytical form of the Green's function for the transport equation at the emission wavelength. We do not have access to this Green's function. Instead, solve the linear problem with a classical method of Landweber iteration~\cite{Kirsch-Book11} that we summarize in the following algorithm.\vskip 2mm
\noindent{\bf Reconstruction Algorithm III.}
\begin{itemize}
	\item[S1.] Take initial guess $(\zeta_0, \xi_0)$;
	\item[S2.] Iteratively update the unknown through the iteration:
	\begin{equation}
		\left(
		\begin{array}{c}
		\zeta_{k+1}\\
		\xi_{k+1}
		\end{array}
		\right)
		=(\bI-\tau\bPi^*\bPi) 	
		\left(
		\begin{array}{c}
		\zeta_{k}\\
		\xi_{k}
		\end{array}
		\right)
		+\tau \bPi^*\bz,\quad k\ge 0.
	\end{equation}
	Stop the iteration when desired convergence criteria are satisfied.
\end{itemize}
Here $\tau$ is a positive algorithmic parameter that we select by trial and error. The adjoint operator $\bPi^*$ is formed by transposing $\bPi$ and replacing $\Pi_\zeta^j$ and $\Pi_\xi^j$ with $\Pi_\zeta^{j*}=K_I(u_x^j) S_m^*\circ K_I\circ\frac{\sigma_{a,m}}{K_I(u_x^j)}$ and $\Pi_\xi^{j*}=u_x^j S_x^*\circ K_I\circ\frac{\sigma_{a,x}^\eta}{K_I(u_x^j)}+ u_x^j S_x^* \circ K_I\circ\eta\sigma_{a,xf} S_m^*\circ K_I\circ\frac{\sigma_{a,m}}{K_I(u_x^j)}$ respectively. Here $S_z^*$ is the adjoint of $S_z$ ($z\in\{x,m\}$) that is defined as the solution operator of the adjoint transport equation with coefficients $\sigma_{a,z}$, $\sigma_{s,z}$ and vacuum boundary condition, i.e. $w_z=S_z^*(f)$ with $w_z$ the solution to:
\begin{equation*}
-\bv\cdot\nabla w_z + (\sigma_{a,z}+\sigma_{s,z}) w_x-\sigma_{s,z}K_\Theta(w_z)=f,\ \ \mbox{in}\ X, \qquad w_z=0\ \ \mbox{on}\ \Gamma_+ .
\end{equation*}
Therefore, at iteration $k$ of the Landweber algorithm, we solve $J$ forward transport systems and then $J$ adjoint transport systems to apply the operator $\bPi^*\bPi$ to the vector $(\zeta_k, \xi_k)^t$.

\subsection{Iterative reconstruction for the nonlinear case}
\label{SUBSEC:Nonl Rec}

For the simultaneous reconstruction of $\eta$ and $\sigma_{a,xf}$ in the general nonlinear case, we do not have any theoretical results on uniqueness and stability currently. Nor do we have more explicit reconstruction methods. We rely mostly on general computational optimization techniques to solve the inverse problem. More precisely, we search for solutions to the inverse problem by minimizing the objective functional:
\begin{equation}\label{EQ:OBJ}
	\Phi(\eta,\sigma_{a,xf}) 
		\equiv \dfrac{1}{2}\sum_{j=1}^{J}\int_{\Omega} \Big\{\Xi \big[\sigma_{a,x}^\eta K_I(u_x^j)+\sigma_{a,m}K_I(u_m^j) \big]-H_j\Big\}^2 d\bx+\beta R(\eta,\sigma_{a,xf})
\end{equation}
where the regularization functional is taken as $R(\eta,\sigma_{a,xf})=\frac{1}{2}(\|\nabla\eta\|_{[L^2(\Omega)]^d}^2+\|\nabla\sigma_{a,xf}\|_{[L^2(\Omega)]^d}^2)$.

Following the result in Proposition~\ref{PROP:Data QfPAT Frechet} and the chain rule, we can obtain the following result straightforwardly.
\begin{corollary}\label{COR:Deri Phi}
	The functional $\Phi(\eta,\sigma_{a,xf})$, viewed as the map: $\Phi: W_2^1(\Omega)\times W_2^1(\Omega) \mapsto \bbR_+$ is Fr\'echet differentiable at any $(\eta, \sigma_{a,xf})\in W_2^1(\Omega)\times W_2^1(\Omega) \cap\cA$. The partial derivatives in the direction $\delta\eta$ (such that $(\eta+\delta\eta,\sigma_{a,xf})\in\cA$) and the direction $\delta\sigma_{a,xf}$ (such that $(\eta,\sigma_{a,xf}+\delta\sigma_{a,xf})\in\cA$) are given respectively as
\begin{eqnarray}
	\Phi_\eta'[\eta,\sigma_{a,xf}](\delta\eta) & = & \int_{\Omega}\Big\{\sum_{j=1}^{J}z_j \Xi\big[-\delta\eta \sigma_{a,xf} K_I(u_x^j)+\sigma_{a,m}K_I(w_m^j)\big]+\beta\nabla\delta\eta\cdot\nabla \eta \Big\} d\bx,\\
	\Phi_\sigma'[\eta,\sigma_{a,xf}](\delta\sigma_{a,xf}) &= & \int_\Omega \sum_{j=1}^{J}z_j \Xi\big[\delta\sigma_{a,xf} (1-\eta)K_I(u_x^j)+\sigma_{a,x}^\eta K_I(v_x)+\sigma_{a,m}K_I(v_m)\big] d\bx\\
	& + & \beta \int_\Omega \nabla\delta\sigma_{a,xf}\cdot\nabla\sigma_{a,xf} d\bx,
\end{eqnarray}
where the residual $z_j=\Xi\big[\sigma_{a,x}^\eta K_I(u_x^j)+\sigma_{a,m} K_I(u_m^j) \big]-H_j$, $w_m$ is the unique solution to~\eqref{EQ:ERT QfPAT Linearized Eta}, and $(v_x, v_m)$ is the unique solution to~\eqref{EQ:ERT QfPAT Linearized Sigma}.
\end{corollary}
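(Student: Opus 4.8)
The plan is to split $\Phi=\sum_{j=1}^{J}\Psi_j\big(H_j[\eta,\sigma_{a,xf}]\big)+\beta R(\eta,\sigma_{a,xf})$, where $\Psi_j(\psi)=\tfrac12\|\psi-H_j\|_{L^2(\Omega)}^2$ and $H_j[\cdot,\cdot]$ is the forward map appearing in~\eqref{EQ:OBJ}, and to differentiate the two pieces separately. The regularization term $R$ is a continuous quadratic form on $W_2^1(\Omega)\times W_2^1(\Omega)$, hence of class $C^\infty$ there, with $R'[\eta,\sigma_{a,xf}](\delta\eta,\delta\sigma_{a,xf})=\int_\Omega(\nabla\eta\cdot\nabla\delta\eta+\nabla\sigma_{a,xf}\cdot\nabla\delta\sigma_{a,xf})\,d\bx$; this produces the $\beta\,\nabla\delta\eta\cdot\nabla\eta$ and $\beta\,\nabla\delta\sigma_{a,xf}\cdot\nabla\sigma_{a,xf}$ contributions in the two formulas. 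Each $\Psi_j\colon L^2(\Omega)\to\bbR_+$ is a second-degree polynomial map, hence smooth, with $\Psi_j'(\psi)\varphi=\langle\psi-H_j,\varphi\rangle_{L^2(\Omega)}$; composing with the Fr\'echet derivative of $H_j[\cdot,\cdot]$ into $L^2(\Omega)$ supplied by Proposition~\ref{PROP:Data QfPAT Frechet} (taken with $p=2$) and applying the chain rule shows that the $j$-th data-misfit term is Fr\'echet differentiable with derivative $\langle z_j,\,H_j'[\eta,\sigma_{a,xf}](\delta\eta,\delta\sigma_{a,xf})\rangle_{L^2(\Omega)}$, where $z_j$ is the residual defined in the statement.

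It then remains to insert the explicit expression for $H_j'$ from~\eqref{EQ:Data QfPAT Linearized}--\eqref{EQ:ERT QfPAT Linearized} (with $u_x$ replaced by $u_x^j$) and to read off the two partial derivatives. Taking $\delta\sigma_{a,xf}=0$ in~\eqref{EQ:ERT QfPAT Linearized} annihilates the source of the $v_x$-equation, so $v_x\equiv 0$ by the uniqueness part of Lemma~\ref{LMMA:ERT Stab}, and the $v_m$-equation becomes precisely~\eqref{EQ:ERT QfPAT Linearized Eta} with solution $w_m^j$; thus $H_j'[\eta,\sigma_{a,xf}](\delta\eta,0)=\Xi\big(-\delta\eta\,\sigma_{a,xf}K_I(u_x^j)+\sigma_{a,m}K_I(w_m^j)\big)$, and pairing with $z_j$, summing over $j$ and adding $\beta R'$ gives the formula for $\Phi_\eta'$. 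Symmetrically, taking $\delta\eta=0$ turns~\eqref{EQ:ERT QfPAT Linearized} into~\eqref{EQ:ERT QfPAT Linearized Sigma}, whose solution is $(v_x^j,v_m^j)$, and leaves $H_j'[\eta,\sigma_{a,xf}](0,\delta\sigma_{a,xf})=\Xi\big((1-\eta)\delta\sigma_{a,xf}K_I(u_x^j)+\sigma_{a,x}^\eta K_I(v_x^j)+\sigma_{a,m}K_I(v_m^j)\big)$, which yields the formula for $\Phi_\sigma'$. Since every auxiliary transport problem is linear in the perturbation direction, these directional derivatives are genuine partial Fr\'echet derivatives.

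The one step that needs care — and the main obstacle — is that Proposition~\ref{PROP:Data QfPAT Frechet} gives Fr\'echet differentiability of $H_j$ only with respect to $L^\infty(\Omega)\times L^\infty(\Omega)$ perturbations, whereas the corollary claims it with respect to the weaker $W_2^1(\Omega)\times W_2^1(\Omega)$ norm. I would close this gap by rerunning the estimates in the proof of Proposition~\ref{PROP:Data QfPAT Frechet} with $\delta\eta,\delta\sigma_{a,xf}$ measured in $L^2(\Omega)$ instead of $L^\infty(\Omega)$: since the illumination sources $g_x^j$ are bounded, Lemma~\ref{LMMA:ERT Stab} puts $u_x^j$, $K_I(u_x^j)$ and all the auxiliary transport solutions in $L^\infty$, so every perturbation only ever multiplies one of these $L^\infty$ quantities and bounds such as $\|\delta\sigma_{a,xf}u_x^j\|_{L^2(X)}\le|\bbS^{d-1}|^{1/2}\|u_x^j\|_{L^\infty(X)}\|\delta\sigma_{a,xf}\|_{L^2(\Omega)}$ replace~\eqref{EQ:Stab uxprime}, while the only genuinely quadratic cross term, $\delta\eta\,\delta\sigma_{a,xf}$, is handled by the Sobolev embedding $W_2^1(\Omega)\hookrightarrow L^4(\Omega)$ (valid for $d\le4$, hence for the physical cases $d=2,3$). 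This upgrades the second-order remainders~\eqref{EQ:Stab ux2prime}--\eqref{EQ:Stab um2prime} to $O(\|(\delta\eta,\delta\sigma_{a,xf})\|_{W_2^1(\Omega)\times W_2^1(\Omega)}^2)$, and composition with the smooth maps $\Psi_j$ then delivers the asserted Fr\'echet differentiability of $\Phi$ on $W_2^1(\Omega)\times W_2^1(\Omega)\cap\cA$.
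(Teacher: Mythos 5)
Your proposal is correct and follows essentially the same route as the paper, which proves the corollary simply by invoking Proposition~\ref{PROP:Data QfPAT Frechet} together with the chain rule (and the smoothness of the quadratic misfit and regularization terms), then reading off the partial derivatives by setting $\delta\sigma_{a,xf}=0$ or $\delta\eta=0$ in~\eqref{EQ:Data QfPAT Linearized}--\eqref{EQ:ERT QfPAT Linearized}, exactly as you do. Your extra step upgrading the differentiability of $H_j$ from $L^\infty(\Omega)$ to $W_2^1(\Omega)$ perturbations, using the $L^\infty$ bounds on the transport solutions and the embedding $W_2^1(\Omega)\hookrightarrow L^4(\Omega)$ for the quadratic cross term, correctly addresses a point the paper leaves implicit and is sound.
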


We can therefore employ gradient-based minimization techniques to minimize the functional~\eqref{EQ:OBJ}. Here we use the limited memory version of the BFGS quasi-Newton method that we implemented in~\cite{ReBaHi-SIAM06}. This method requires only the gradients of the objective functional which we derived in Corollary~\ref{COR:Deri Phi}. To simplify the computation of these gradients numerically, we apply the adjoint state technique. We denote by $(q_x^j,q_m^j)$ the unique solution to the following adjoint transport system:
\begin{equation}\label{EQ:ERT QfPAT Adj}
\begin{array}{rcll}
	-\bv\cdot\nabla q_x^j+\sigma_{t,x} q_x^j&=& \sigma_{s,x} K_\Theta(q_x^j) + \Xi \sigma_{a,x}^\eta z_j+\eta \sigma_{a,xf} K_I(q_m^j), &\mbox{in}\ \ X\\
	-\bv\cdot\nabla q_m^j+\sigma_{t,m} q_m^j &=& \sigma_{s,m} K_\Theta(q_m^j)+\Xi \sigma_{a,m} z_j, &\mbox{in}\ \ X\\
    q_x^j(\bx,\bv) = 0,&&  
    q_m^j(\bx,\bv) = 0 & \mbox{on}\ \ \Gamma_+ .
	\end{array}	
\end{equation}
It is then straightforward to show that
\begin{eqnarray}
\label{EQ:Phi Grad E} 
\Phi_\eta'[\eta,\sigma_{a,xf}](\delta\eta) & = & \int_{\Omega} \Big\{\dsum_{j=1}^{J}\delta\eta \sigma_{a,xf} K_I(u_x^j) \big[-\Xi z_j  + K_I(q_m^j)\big]+\beta\nabla\delta\eta\cdot\nabla \eta \Big\}d\bx,\\
\nonumber 
\Phi_\sigma'[\eta,\sigma_{a,xf}](\delta\sigma_{a,xf}) & = & \int_\Omega \dsum_{j=1}^{J} \delta\sigma_{a,xf}K_I(u_x^j)\big[\Xi(1-\eta)z_j+\eta K_I(q_m)-K_I(q_x)\big] d\bx\\
\label{EQ:Phi Grad S}
& + & \beta\int_\Omega \nabla\delta\sigma_{a,xf}\cdot\nabla\sigma_{a,xf} d\bx.
\end{eqnarray}
Therefore, to compute gradients of the $\Phi$ at $(\eta, \sigma_{a,xf})$, we only need to solve a set of $J$ forward transport systems~\eqref{EQ:ERT QfPAT} and a set of $J$ adjoint transport systems~\eqref{EQ:ERT QfPAT Adj}. We can then evaluate the gradients in any given direction $(\delta\eta, \delta\sigma_{a,xf})$ according to~\eqref{EQ:Phi Grad E} and~\eqref{EQ:Phi Grad S}.

It is obvious that this optimization-based nonlinear reconstruction method can be used also to reconstruct a single coefficient. To only reconstruct $\eta$, we only need to set the gradient with respect to $\sigma_{a,xf}$ to zero and vice versa.

\section{Numerical Experiments}
\label{SEC:Num}

We now present some numerical reconstructions using synthetic interior data. We restrict ourselves to two-dimensional settings only to simplify the computation. 

The spatial domain of the reconstruction is the square $\Omega=(-1,1)\times (-1,1)$. All the transport equations in $\Omega\times\bbS^1$ are discretized angularly with the discrete ordinate method and spatially with a first-order finite element method on triangular meshes. In all the simulations in this section, reconstructions are performed on a finite element mesh consisting of about $2000$ triangles and a discrete ordinate set with $64$ directions. For the absorption and scattering coefficients that are known, we take
\begin{align}
\label{EQ:uaxi}	\sigma_{a,xi}&=\sigma_{a,m}= \sigma_{a}^b\left(2 -(\lfloor 2x \rfloor +\lfloor 2y \rfloor\mod{2})\right),\\
\label{EQ:usx}	\sigma_{s,x}&=\sigma_{s,m}= \sigma_{s}^b\left(1+(\lfloor 2x \rfloor +\lfloor 2y \rfloor\mod{2})\right),
\end{align}
where $\lfloor\cdot\rfloor$ represents the floor operation, $\sigma_{a}^b$ and $\sigma_{s}^b$ are respectively the base level absorption and scattering coefficients. In all the cases below, we set $\sigma_{a}^b=0.1$. The value of $\sigma_{s}^b$ varies from case to case and will be given below; see Fig.~\ref{FIG:Sim Setup} (i) and (ii) for plots of the two coefficients. The scattering kernel $\Theta$ is set to be the Henyey-Greenstein phase function~\cite{Arridge-IP99,HeGr-AJ41,WeVa-Book95} which depends only on the product $\bv\cdot\bv'$. 

To generate synthetic data for the nonlinear inversions, we solve the transport system~\eqref{EQ:ERT QfPAT} with true quantum efficiency $\eta$ and fluorescent absorption coefficient $\sigma_{a,xf}$ and compute $H$ according to~\eqref{EQ:Data QfPAT}. To generate synthetic data for linearized inversions, for instance in Experiment 3 below, we use directly the linearized data models, for instance~\eqref{EQ:Data QfPAT Pert}, with the true coefficient perturbations. This way, we can exclude the linearization error from the data used in the inversion. We do this since our main aim is to test the performance of the reconstruction algorithms, not to check the accuracy of the linearizations. To mimic noisy measurements, we add additive random noise to the synthetic data by multiplying each datum point by $(1+\gamma \times 10^{-2}\texttt{normrnd})$ with \texttt{normrnd} a standard Gaussian random variable and $\gamma$ a number representing the noise level in percentage. When $\gamma=0$, we say the data are noise-free.

To measure the quality of the reconstruction, we use the relative $L^2$ error. This error is defined as the $L^2$ norm of the difference between the reconstructed coefficient and the true coefficient, divided by the $L^2$ norm of the true coefficient and then multiplied by $100$. 

We performed numerical simulations on the reconstructions of many different coefficients pairs $(\eta, \sigma_{a,xf})$. The qualities of the the reconstructions are very similar. To avoid repetition, we will present only reconstructions for a typical coefficient pair we show in (iii)-(iv) of Fig.~\ref{FIG:Sim Setup}.
\begin{figure}[hbtp] 
\centering
\includegraphics[width=0.24\textwidth,height=0.22\textwidth]{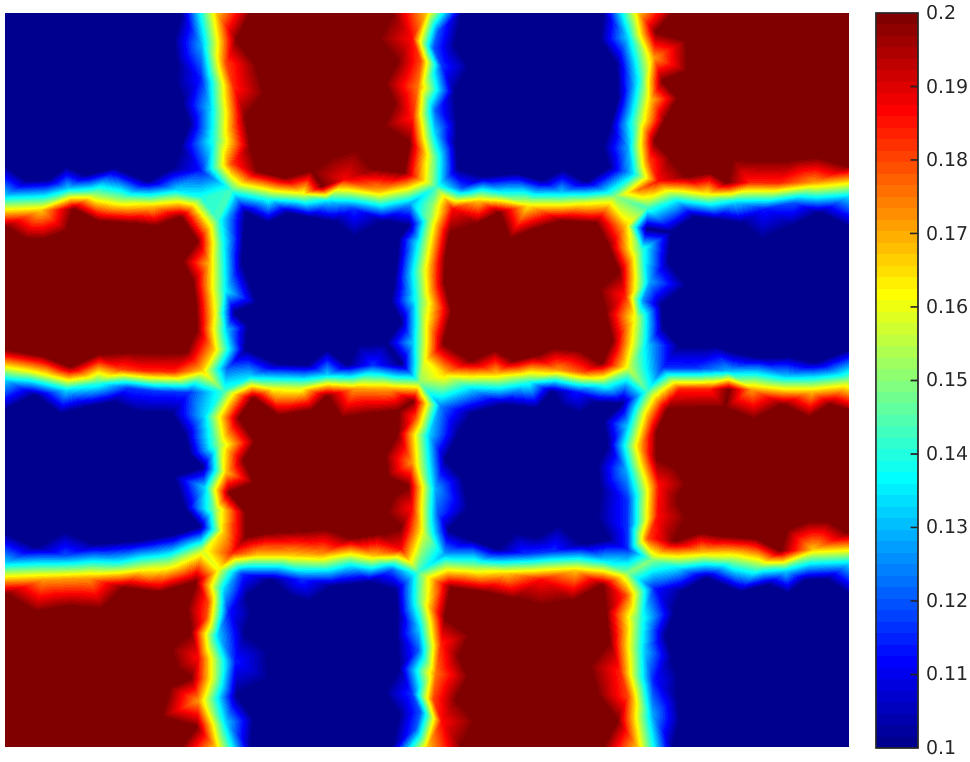}
\includegraphics[width=0.24\textwidth,height=0.22\textwidth]{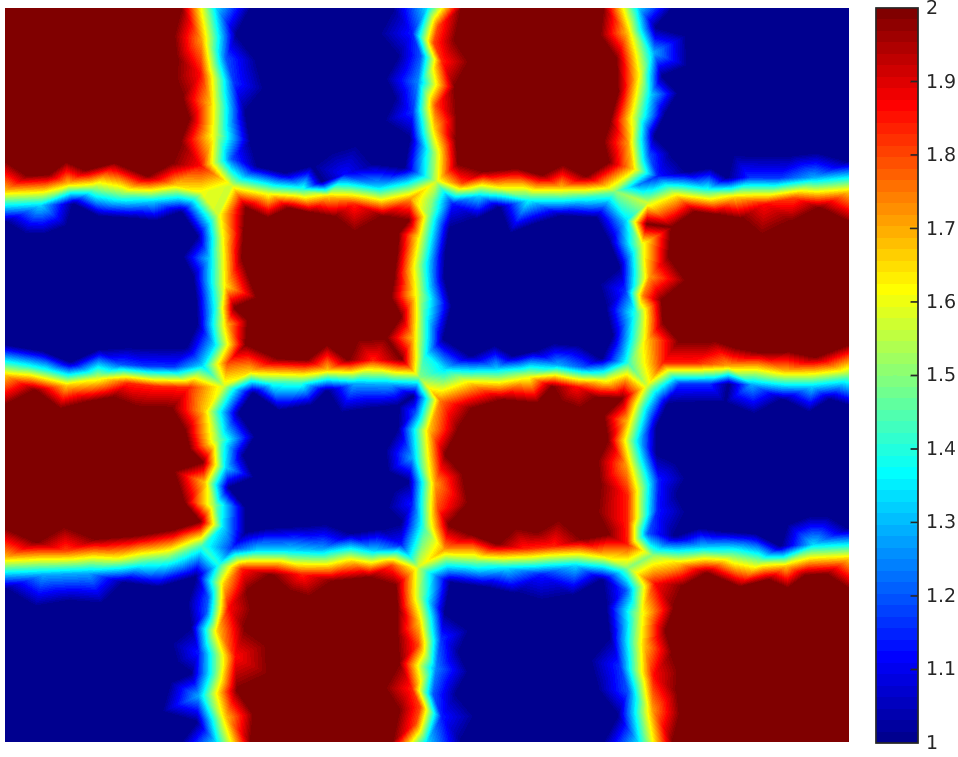} 
\includegraphics[width=0.24\textwidth,height=0.22\textwidth]{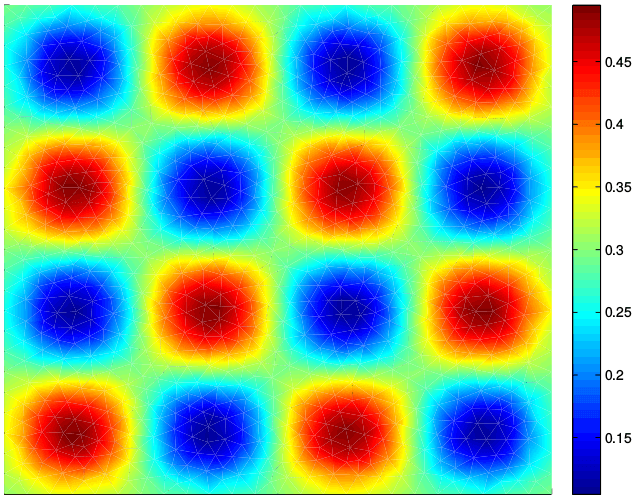}
\includegraphics[width=0.24\textwidth,height=0.22\textwidth]{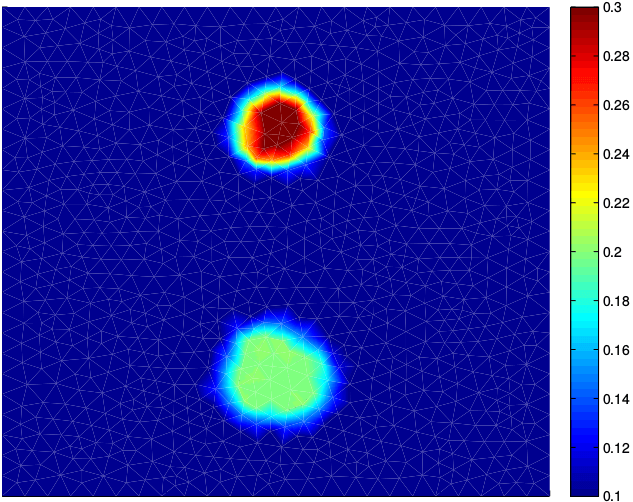} 
\caption{From left to right are: (i) the absorption coefficient $\sigma_{a,xi}=\sigma_{a,m}$ defined in~\eqref{EQ:uaxi} with $\sigma_a^b=0.1$, (ii) the scattering coefficient $\sigma_{s,x}=\sigma_{s,m}$ defined in~\eqref{EQ:usx} with $\sigma_s^b=2.0$, (iii) the true quantum efficiency $\eta$ to be reconstructed  in the numerical experiments, and (iv) the true fluorescence absorption coefficient $\sigma_{a,xf}$ to be reconstructed.} 
\label{FIG:Sim Setup}
\end{figure}

\paragraph{Experiment 1.} In the first set of numerical studies, we consider the reconstruction of the quantum efficiency $\eta$ assuming that the fluorescent absorption coefficient $\sigma_{a,xf}$ is known. We use the Reconstruction Algorithm I presented in Section~\ref{SUBSEC:Eta}. We first perform numerical experiments in isotropic medium with two different strengths of scattering coefficients. We show in Fig.~\ref{fig:qne_us1} the reconstructions of $\eta$ under base scattering $\sigma_{s}^b=1.0$. Shown from left to right are respectively the $\eta$ reconstructed using data with noise level $\gamma=0$, $2$, $5$ and $10$ respectively. The relative $L^2$ errors in the reconstructions are respectively $0.01\%$, $14.24\%$, $35.59\%$ and $71.18\%$. We repeat the simulations for a medium with stronger (but still isotropic) scattering ($\sigma_{s}^b=9.0$). The results are shown in Fig.~\ref{fig:qne_us9}. The relative $L^2$ errors in this case are $1.04\%$, $14.84\%$, $37.02\%$ and $74.02\%$ respectively. If we compare the results in Fig.~\ref{fig:qne_us1} and those in Fig.~\ref{fig:qne_us9}, we see that the quality of the reconstructions are almost independent of the scattering strength. This is what we observed in our numerical experiments in other cases as well.
\begin{figure}[hbtp] 
\centering 
\includegraphics[width=0.24\textwidth,height=0.22\textwidth]{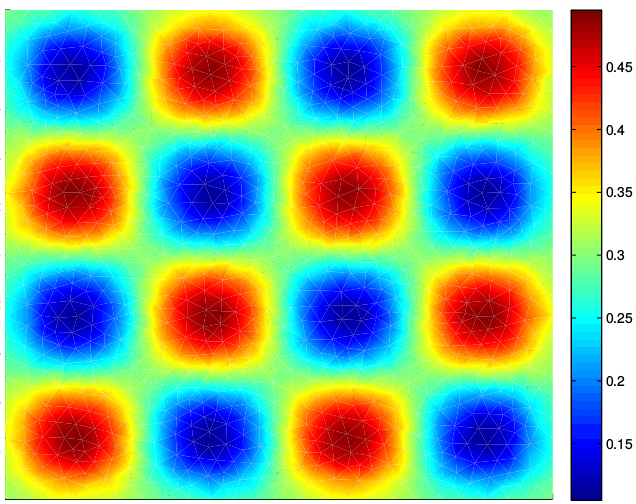} 
\includegraphics[width=0.24\textwidth,height=0.22\textwidth]{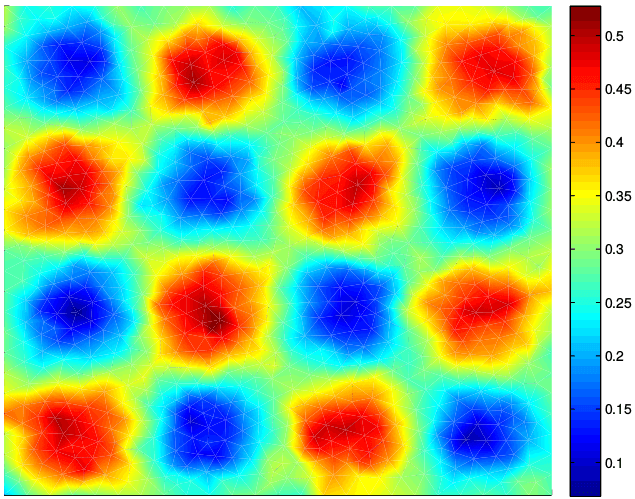} 
\includegraphics[width=0.24\textwidth,height=0.22\textwidth]{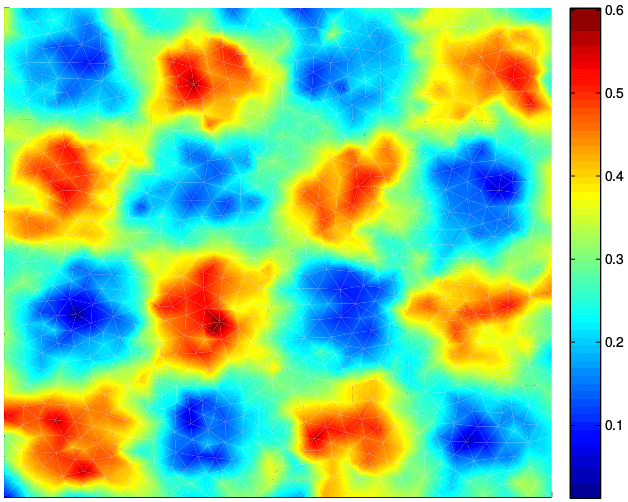} 
\includegraphics[width=0.24\textwidth,height=0.22\textwidth]{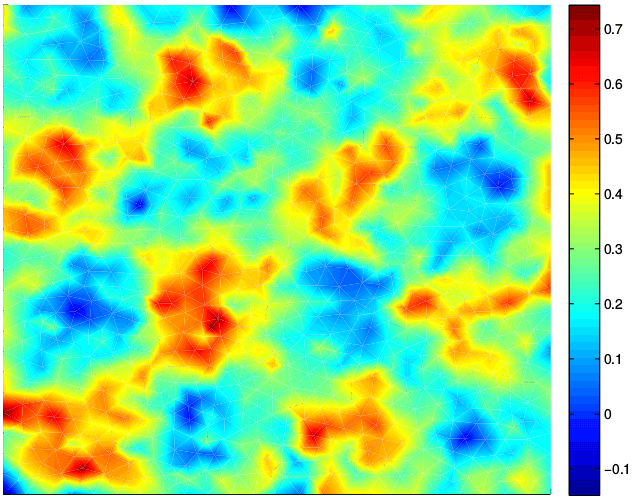} 
\caption{The quantum efficiency $\eta$ reconstructed with different types of data. The noise levels in the data used for the reconstructions, from left to right are $\gamma=0$, $2$, $5$ and $10$ respectively. The base scattering strength is $\sigma_{s}^b=1.0$.} 
\label{fig:qne_us1}
\end{figure}
\begin{figure}[hbtp] 
\centering 
\includegraphics[width=0.24\textwidth,height=0.22\textwidth]{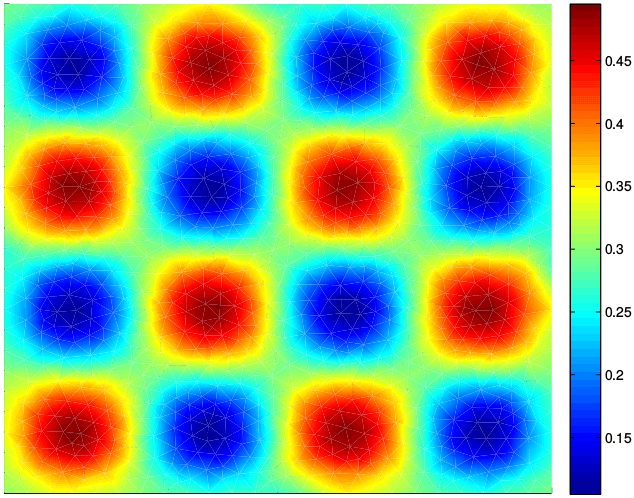} 
\includegraphics[width=0.24\textwidth,height=0.22\textwidth]{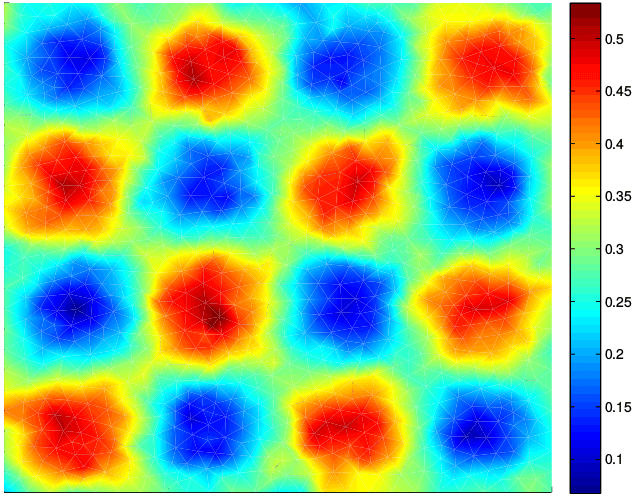} 
\includegraphics[width=0.24\textwidth,height=0.22\textwidth]{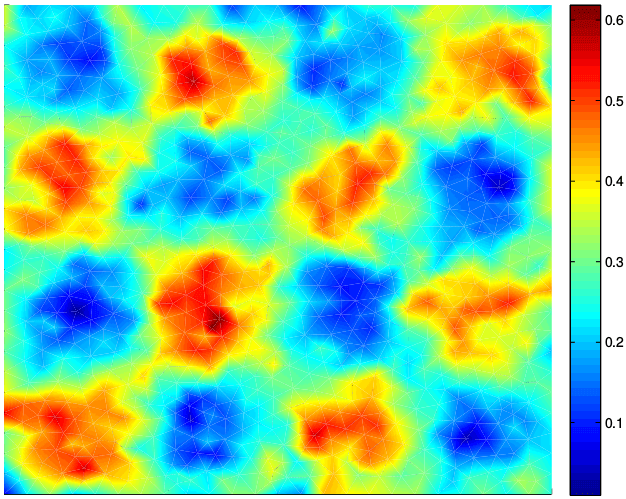} 
\includegraphics[width=0.24\textwidth,height=0.22\textwidth]{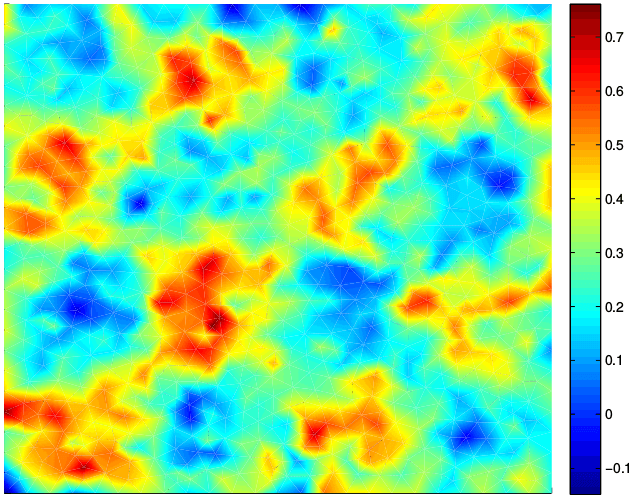} 
\caption{Same as in Fig.~\ref{fig:qne_us1} but with base scattering strength $\sigma_{s}^b=9.0$.}
\label{fig:qne_us9}
\end{figure}

\paragraph{Experiment 2.} In the second set of numerical studies, we consider the reconstruction of the fluorescent absorption coefficient $\sigma_{a,xf}$ assuming that the quantum efficiency $\eta$ is known. Currently, we do not have a well-established method to construct illuminations sources such that the condition $u_x=K_I(u_x)$ is satisfied for the transport solution, besides in non-scattering media. We therefore can not use directly the Reconstruction Algorithm II as we commented before. Instead, we use the nonlinear reconstruction algorithm in Section~\ref{SUBSEC:Nonl Rec}. We show in Fig.~\ref{fig:uxaf_us1} the reconstructions of $\sigma_{a,xf}$ in an isotropic medium with base scattering strength $\sigma_{s}^b=1.0$. Shown from left to right are respectively the reconstructions using data with noise levels $\gamma=0$, $2$, $5$ and $10$. The relative $L^2$ errors in the four reconstructions are $0.01\%$, $6.42\%$,$16.06\%$ and $32.12\%$ respectively. In Fig.~\ref{fig:uxaf_us9}, we show the same reconstructions in an anisotropic scattering medium with base scattering strength $\sigma_{s}^b=9.0$ and anisotropic factor $0.9$. The relative $L^2$ errors are $0.02\%$,$6.70\%$,$16.74\%$ and $33.42\%$, respectively. We again observed that the reconstructions are of good quality with data contains reasonably low level of random noise.
\begin{figure}[hbtp] 
\centering
\includegraphics[width=0.24\textwidth,height=0.22\textwidth]{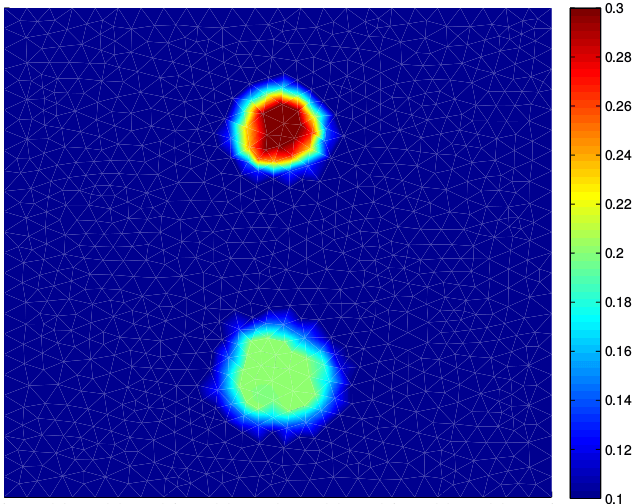} 
\includegraphics[width=0.24\textwidth,height=0.22\textwidth]{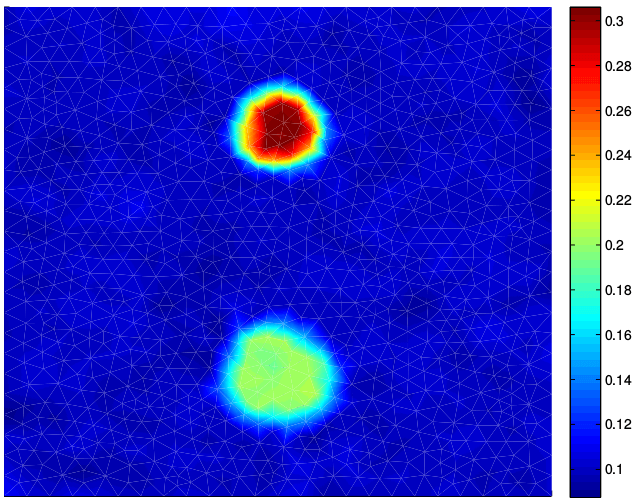} 
\includegraphics[width=0.24\textwidth,height=0.22\textwidth]{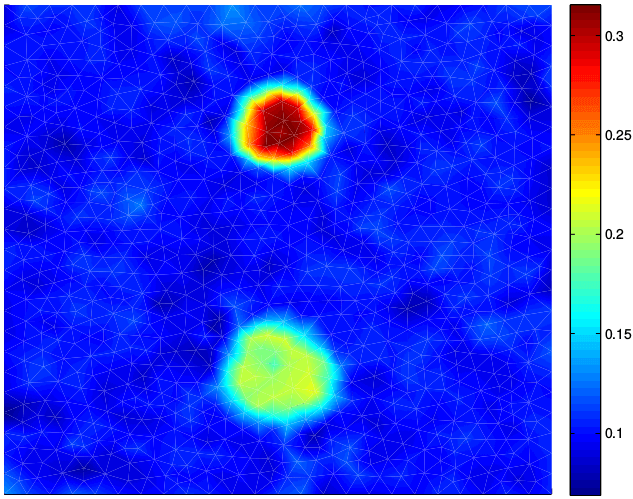} 
\includegraphics[width=0.24\textwidth,height=0.22\textwidth]{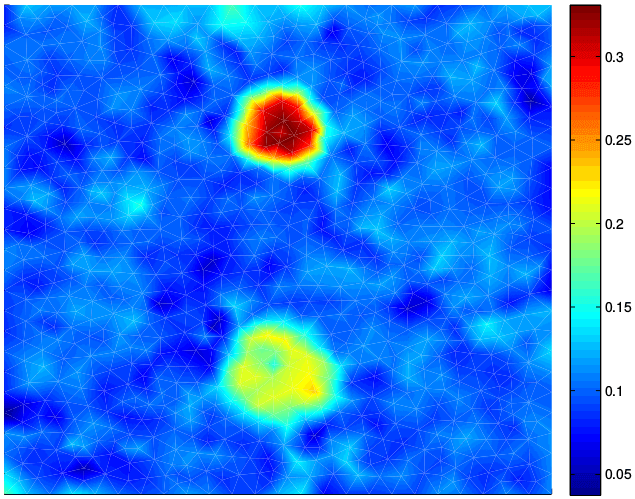} 
\caption{The fluorescence absorption coefficient $\sigma_{a,xf}$ reconstructed with different types of data. The noise level in the data used for the reconstructions, from left to right are: $\gamma=0$ (noise-free), $\gamma=2$, $\gamma=5$, and $\gamma=10$. The base scattering strength is $\sigma_{s}^b=1.0$.} 
\label{fig:uxaf_us1}
\end{figure}
\begin{figure}[hbtp] 
\centering
\includegraphics[width=0.24\textwidth,height=0.22\textwidth]{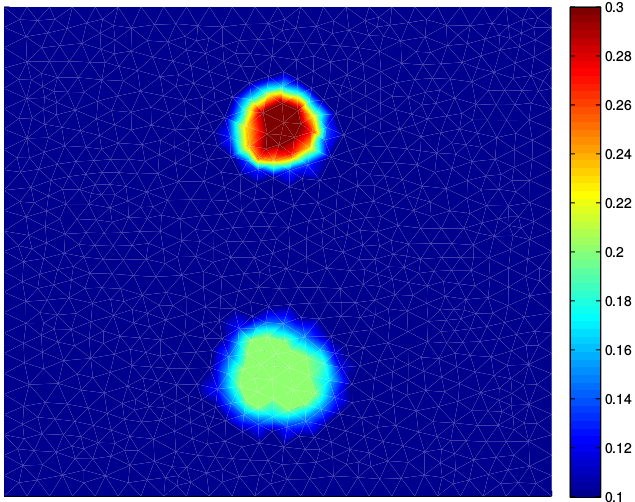} 
\includegraphics[width=0.24\textwidth,height=0.22\textwidth]{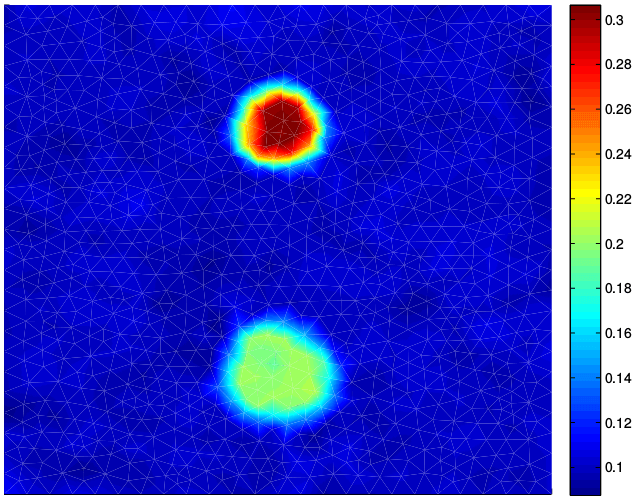} 
\includegraphics[width=0.24\textwidth,height=0.22\textwidth]{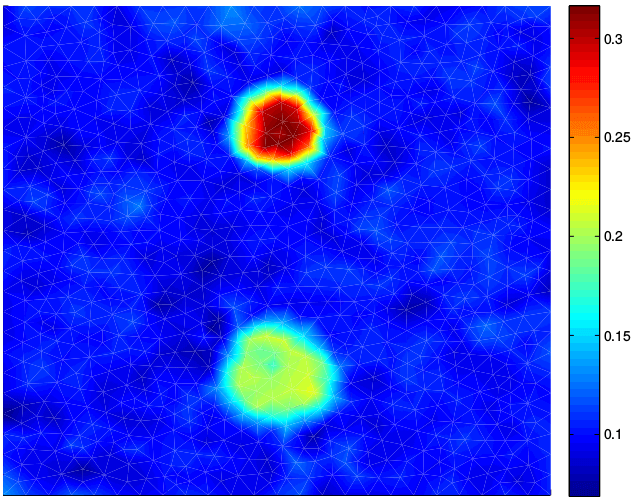} 
\includegraphics[width=0.24\textwidth,height=0.22\textwidth]{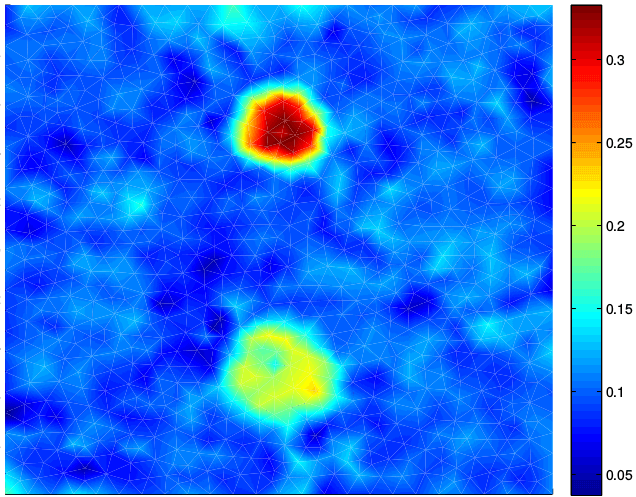} 
\caption{Same as in Fig.~\ref{fig:uxaf_us1} but in a medium of anisotropic scattering with base scattering strength $\sigma_{s}^b=9.0$ and anisotropic factor $0.9$.} 
\label{fig:uxaf_us9}
\end{figure}

\paragraph{Experiment 3.} In the third set of numerical simulations, we study the simultaneous reconstruction of the coefficients $\eta$ and $\sigma_{a,xf}$ in the linearized setting described in Section~\ref{SUBSEC:Lin Eta 0} using the Reconstruction Algorithm III. The synthetic perturbed data are generated using directly the linearized model~\eqref{EQ:Data QfPAT Pert}, not the original nonlinear model. Our aim here is to test the stability of the reconstruction, not the accuracy of the linearization. We use data sets collected from four angularly-resolved illuminations supported respectively on the four sides of the boundaries of the domain, pointing toward the interior of the domain. The background scattering strength is $\sigma_{s}^b=1.0$ and the anisotropic factor is $0.5$. We linearize the problem around the background coefficients:
\[
	\eta^0=\dfrac{1}{|\Omega|}\dint_\Omega\eta(\bx) d\bx\ \ \mbox{and}\ \ 
\sigma_{a,xf}^0=\dfrac{1}{|\Omega|}\dint_\Omega\sigma_{a,xf}(\bx) d\bx .
\]
The reconstructions, after adding back the background, are shown in Fig.~\ref{fig:uaxf_qne_us0}. The relative $L^2$ error in the reconstructions using data with noise level $\gamma=0$, $\gamma=2$, $\gamma=5$ and $\gamma=10$ are respectively $(0.00\%, 0.00\%)$, $(14.65\%, 7.45\%)$, $(37.28\%, 18.77\%)$ and $(75.80\%, 39.04\%)$ respectively. In all reconstructions, we applied the Tikhonov regularization with a small regularization strength that we select by trial and errors. We hope to develop more systematical strategy on regularization in the future. 
\begin{figure}[hbtp] 
\centering 
\includegraphics[width=0.24\textwidth,height=0.22\textwidth]{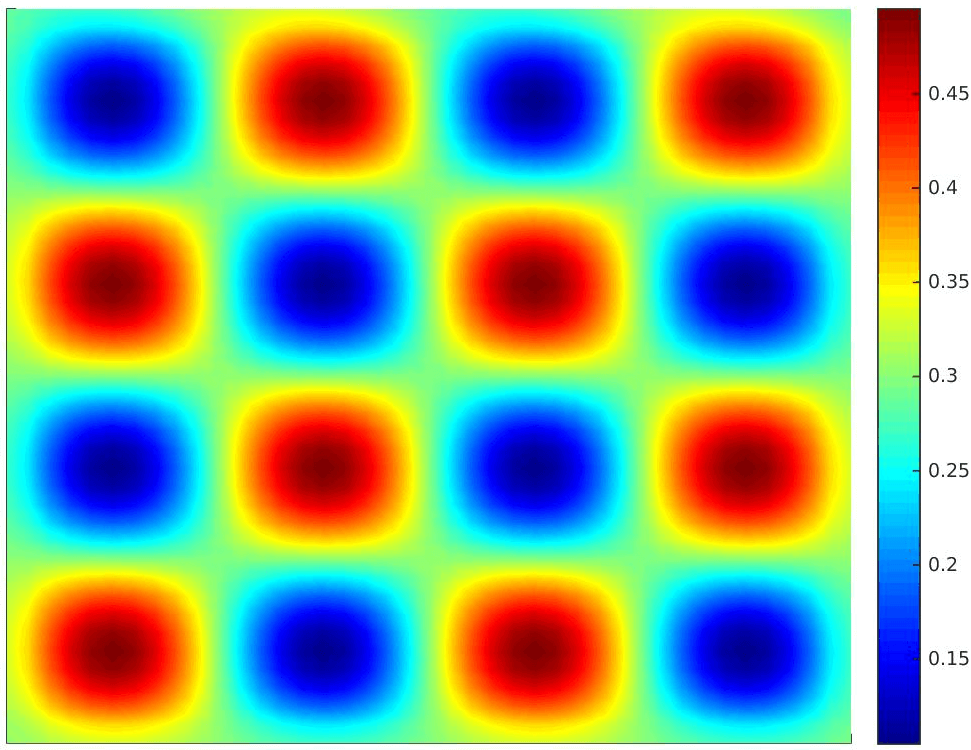} 
\includegraphics[width=0.24\textwidth,height=0.22\textwidth]{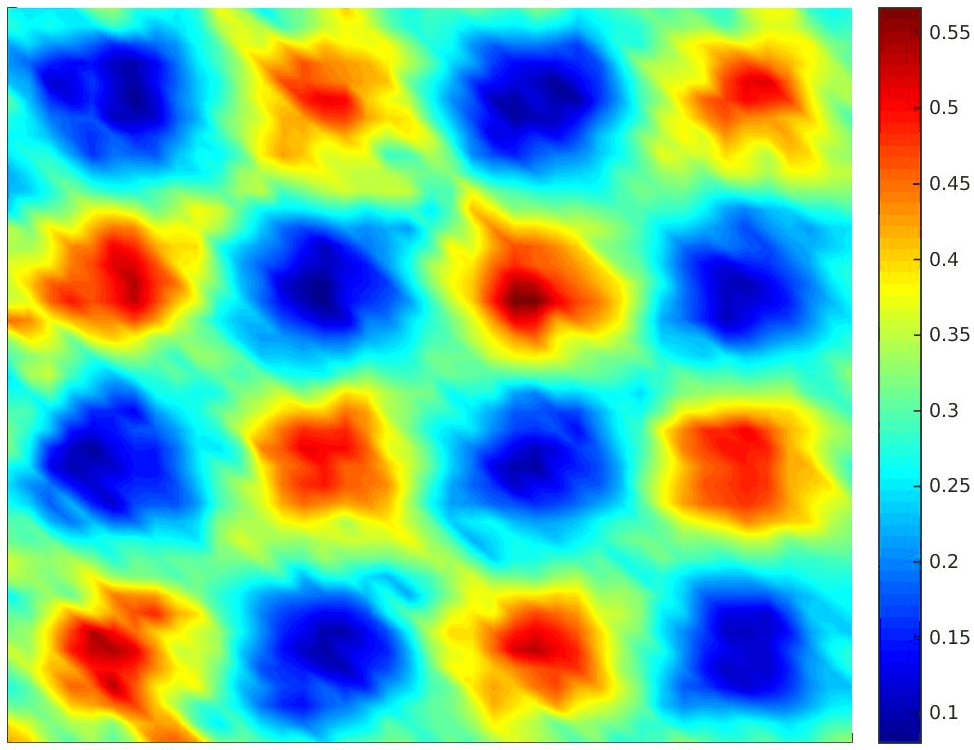} 
\includegraphics[width=0.24\textwidth,height=0.22\textwidth]{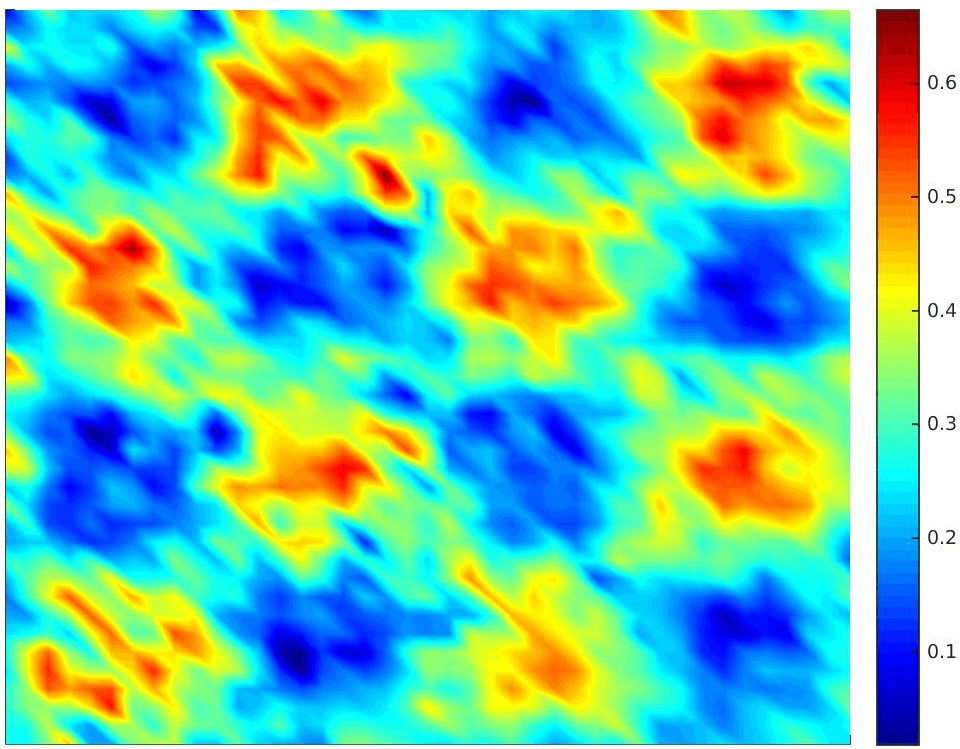} 
\includegraphics[width=0.24\textwidth,height=0.22\textwidth]{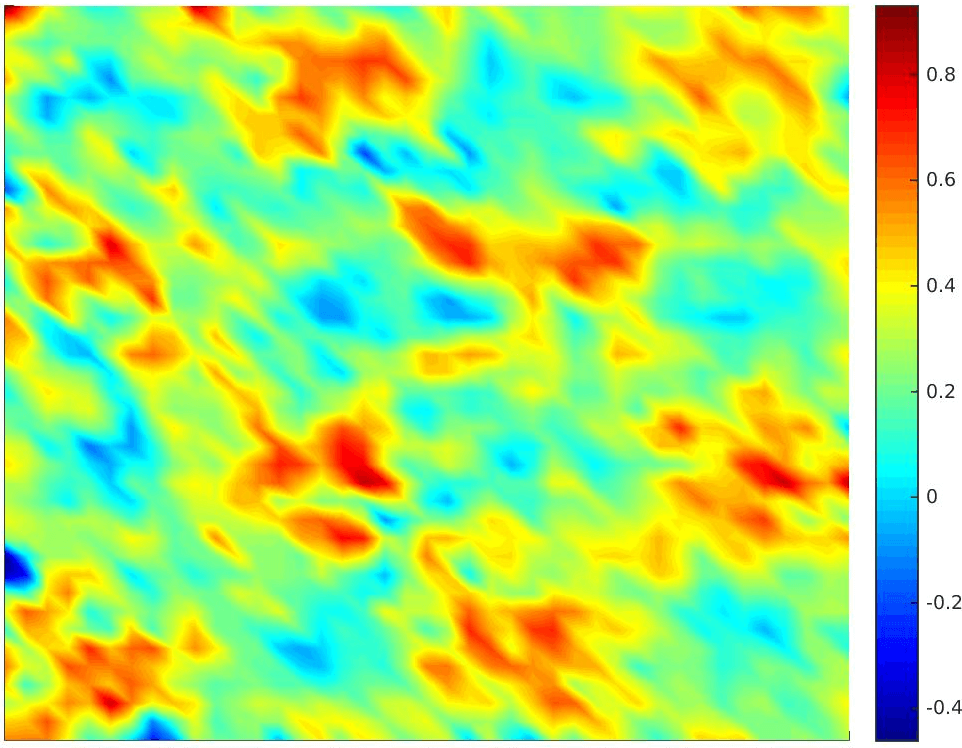}\\ 
\includegraphics[width=0.24\textwidth,height=0.22\textwidth]{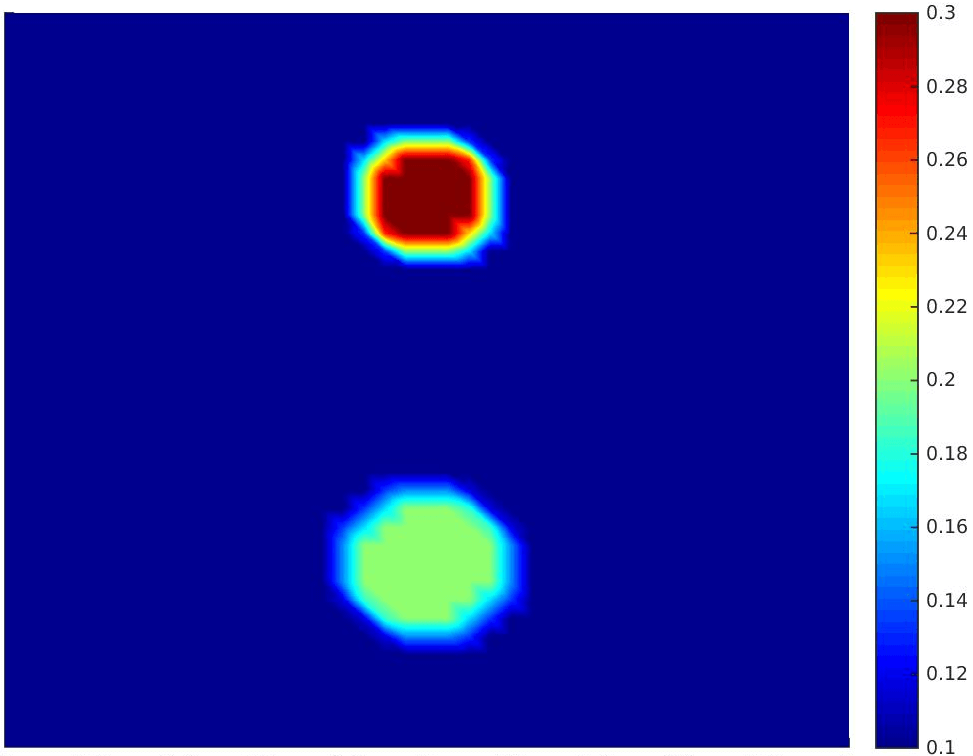} 
\includegraphics[width=0.24\textwidth,height=0.22\textwidth]{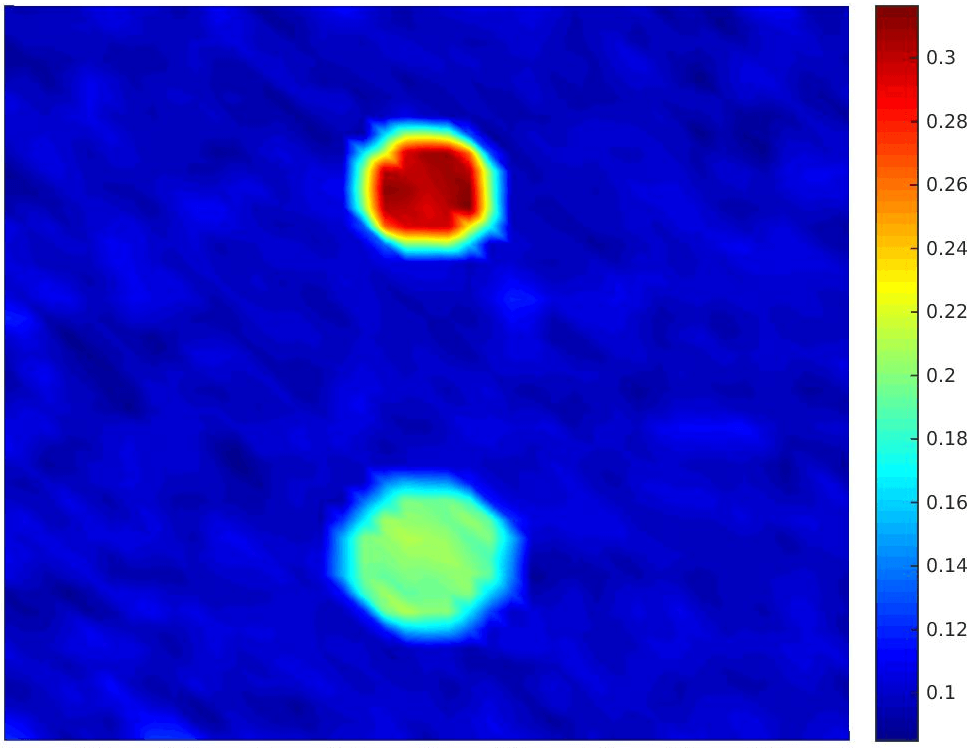} 
\includegraphics[width=0.24\textwidth,height=0.22\textwidth]{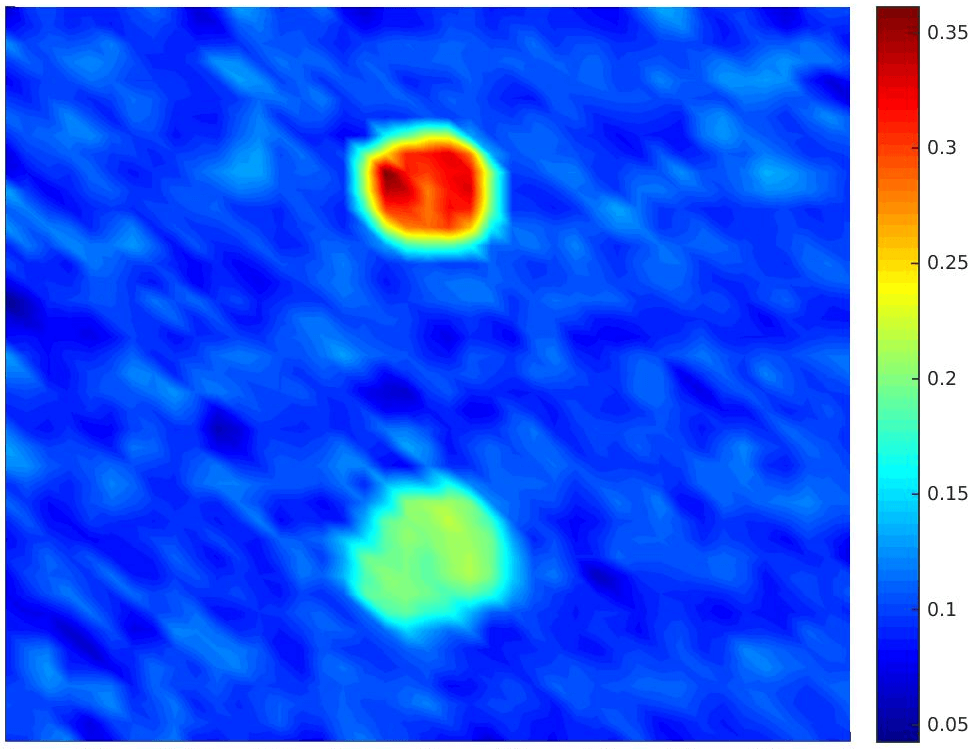} 
\includegraphics[width=0.24\textwidth,height=0.22\textwidth]{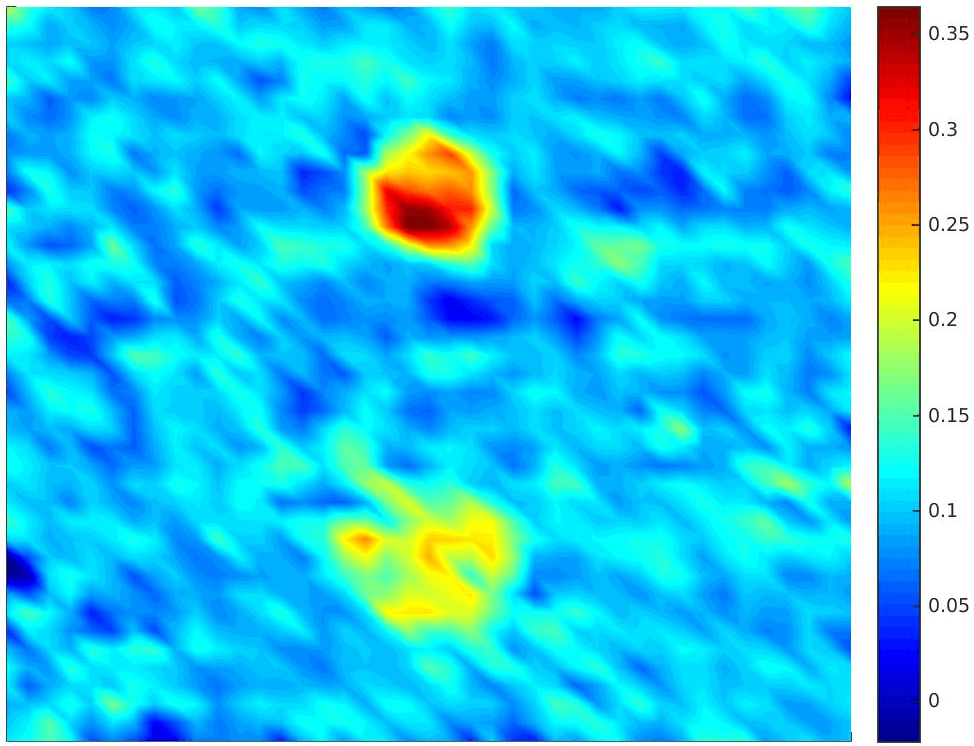} 
\caption{Simultaneous reconstructions of the coefficient pair $(\eta, \sigma_{a,xf})$ in the linearized setting with different types of data. The noise level in the data used for the reconstructions are (from left to right): $\gamma=0$, $2$, $5$ and $10$ respectively. The base scattering strength is $\sigma_{s}^b=1.0$.}
\label{fig:uaxf_qne_us0}
\end{figure}

\paragraph{Experiment 4.} The last set of numerical simulations are devoted to the simultaneous reconstructions of the coefficient pair $(\eta, \sigma_{a,xf})$ in the fully nonlinear setting. We use the optimization-based reconstruction algorithm developed in Section~\ref{SUBSEC:Nonl Rec}. Besides the fact that the synthetic data are now generated with the full transport model~\eqref{EQ:ERT QfPAT}, not the linearized model~\eqref{EQ:Data QfPAT Pert}, the setup (for instance the background coefficients and anisotropic factor etc) is the same as that in Experiment 3. We performed reconstructions with data containing various noise levels. When the noise level is too high, we have difficulties to find reasonable initial guesses to make the algorithm converge. We show in Fig.~\ref{FIG:Simult Nonl} reconstructions with data containing a small amount of noise, $\gamma=0$, $1$ and $2$ respectively, with the initial guess $(\eta^0, \sigma_{a,xf}^0)$ being the average of the true coefficients inside the domain. The relative $L^2$ error in the reconstructions are respectively $(16.40\%, 8.32\%)$, $(18.26\%, 9.17\%)$ and $(23.26\%, 19.30\%)$ respectively. We again impose weak Tikhonov regularizations in all the reconstructions with the regularization strengths selected by trial and error. Tuning various parameters in the algorithm could potentially improve the reconstructions results, but we did not pursue in that direction.
\begin{figure}[hbtp] 
\centering 
\includegraphics[width=0.255\textwidth,height=0.22\textwidth]{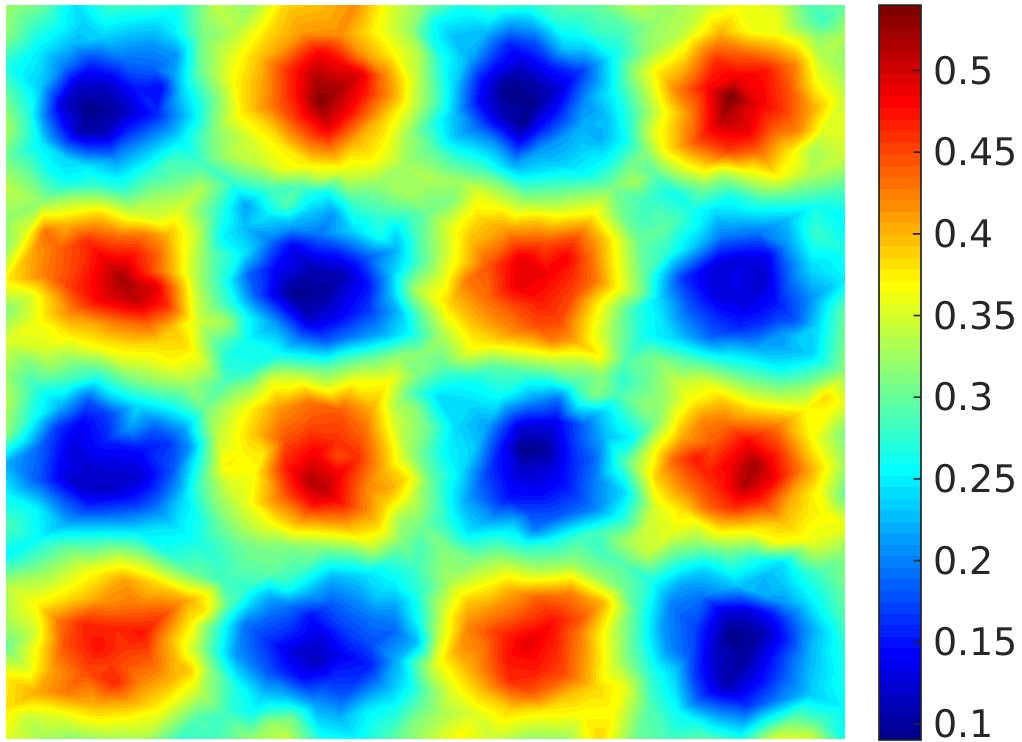} 
\includegraphics[width=0.255\textwidth,height=0.22\textwidth]{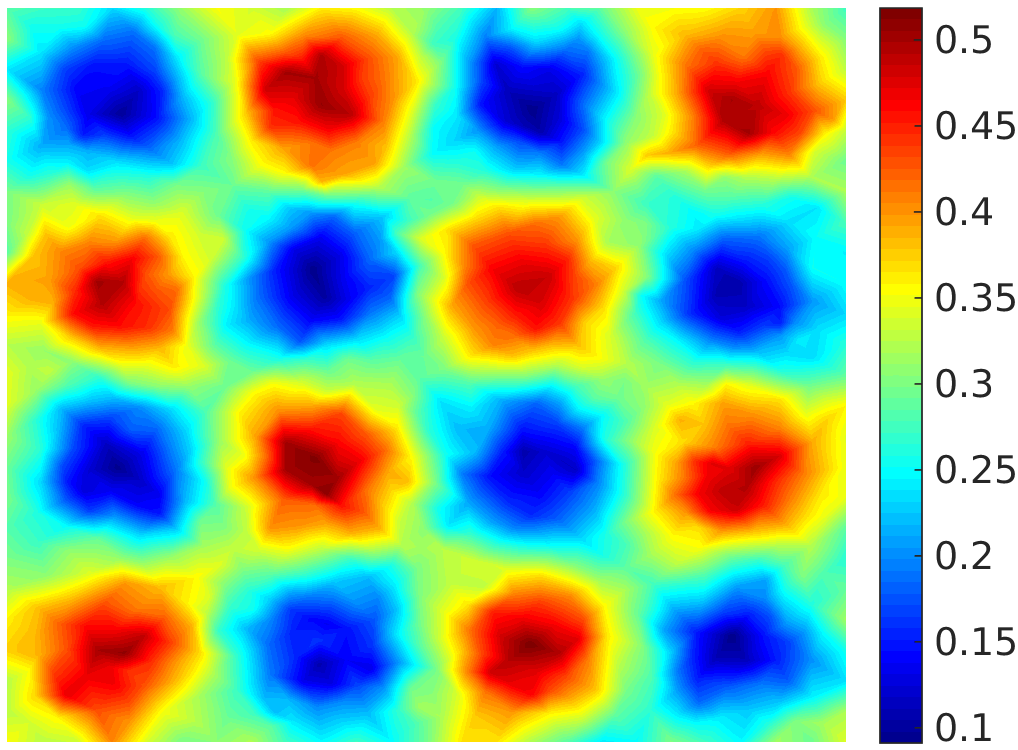}
\includegraphics[width=0.255\textwidth,height=0.22\textwidth]{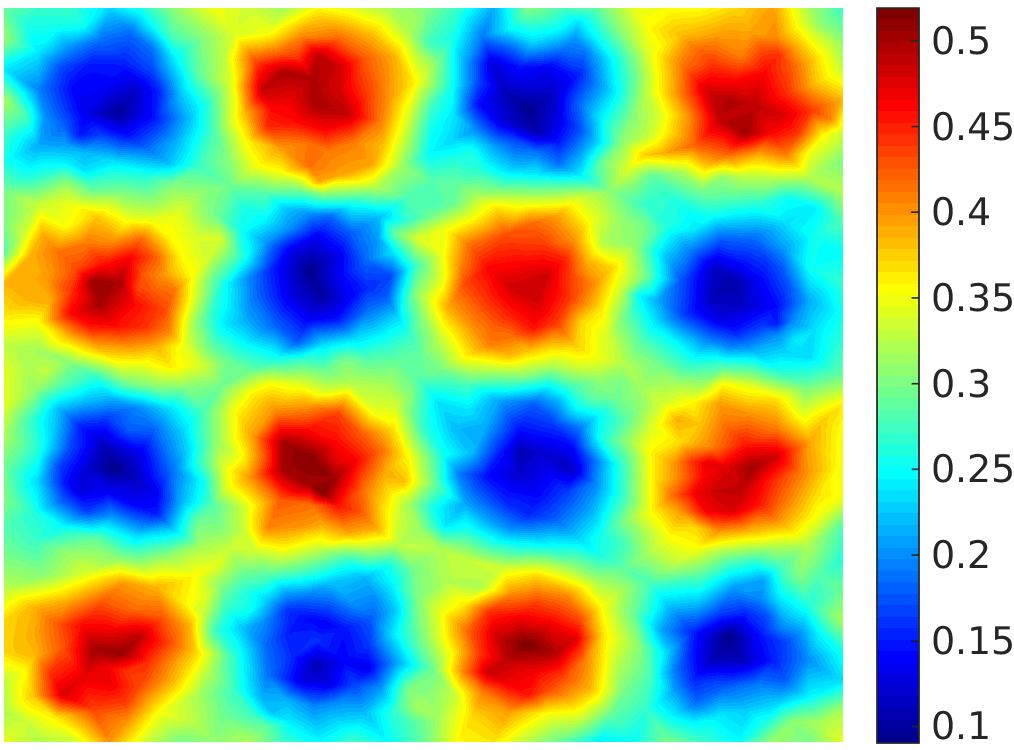}\\ 
\includegraphics[width=0.255\textwidth,height=0.22\textwidth]{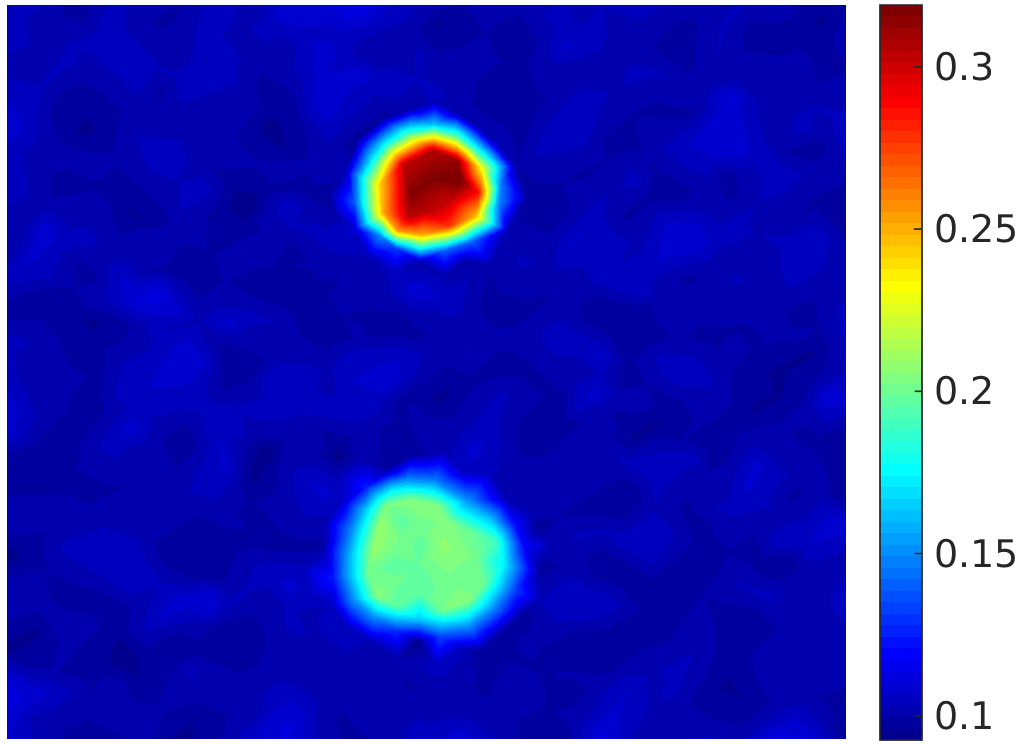} 
\includegraphics[width=0.255\textwidth,height=0.22\textwidth]{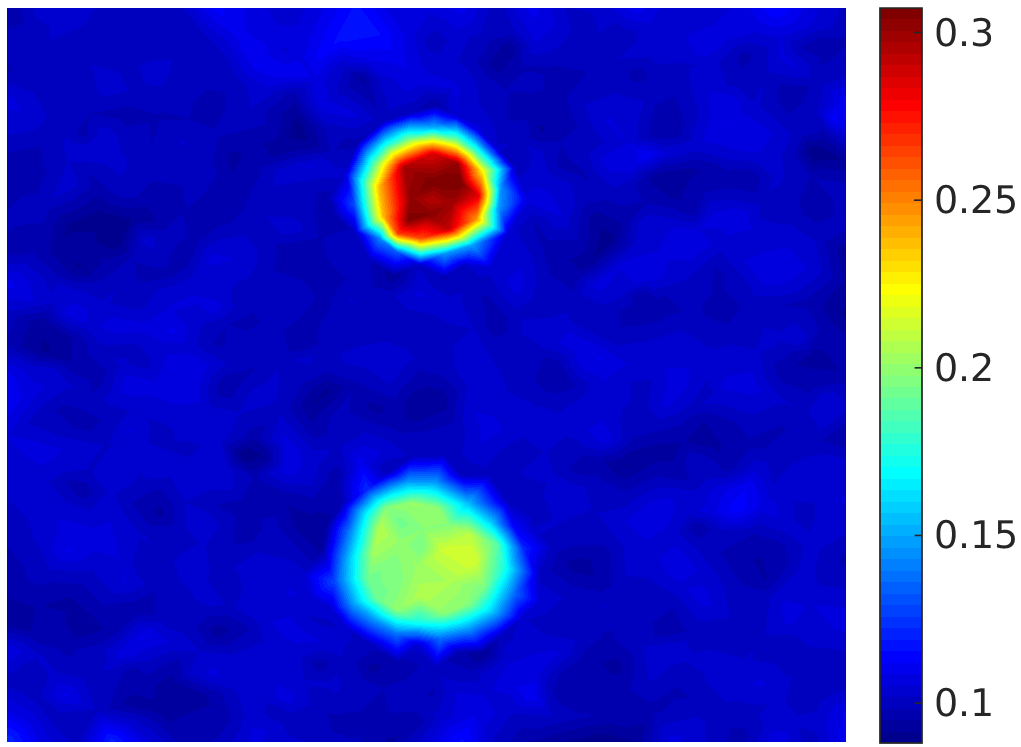} 
\includegraphics[width=0.255\textwidth,height=0.22\textwidth]{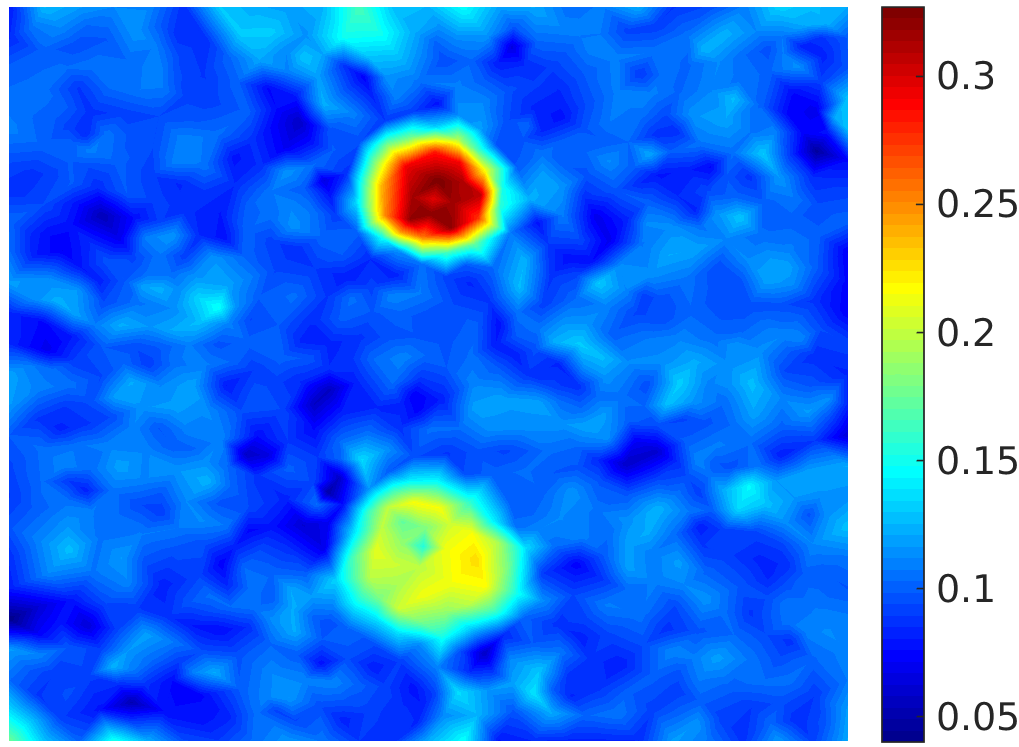} 
\caption{Simultaneous reconstruction of the coefficient pair $(\eta, \sigma_{a,xf})$ in the nonlinear setting with different types of data. The noise level in the data used for the reconstructions, from left to right, are respectively $\gamma=0$, $1$ and $2$.} 
\label{FIG:Simult Nonl}
\end{figure}

\section{Concluding Remarks}
\label{SEC:Concl}

We studied in this work a few inverse problems in quantitative fluorescence photoacoustic tomography in the radiative transport regime. We derived some uniqueness and stability results on the reconstruction of the fluorescence absorption coefficient and the quantum efficiency of the medium. In some cases, we were also able to develop efficient numerical reconstruction algorithms. These results complement the results in~\cite{ReZh-SIAM13} for the QfPAT problem in the diffusive regime. We showed numerical simulations based on synthetic data to support the mathematical analysis and demonstrate the performance of some of the reconstruction algorithms.

One important application of the results in this paper is in X-ray modulated fluorescence tomography (or X-ray luminescence tomography (XLT))~\cite{StCoWa-IPI15}. In XLT, X-rays, instead of NIR photons, are used to excite the molecular markers. The X-ray density $u_x$ and the generated NIR photon densities $u_m$ solve the coupled transport system~\eqref{EQ:ERT QfPAT} with the scattering term $K_\Theta(u_x)=0$ since X-rays travel in straight lines without being scattered. The theory and reconstruction methods we developed in this work remain valid in that case. In other words, we can recover stably the fluorescence absorption coefficient using data collected from one X-ray illumination. This would provide a useful alternative to the reconstruction method for XLT in~\cite{StCoWa-IPI15}.

Even though the QfPAT problem has been analyzed in detail in~\cite{ReZh-SIAM13} in the diffusive regime, the developments in this work are still useful in many settings. One well-known example is the application in optical imaging of small animals~\cite{Hielscher-COB05} where the diffusion model is not sufficiently accurate to describe the propagation of NIR photons inside the animals.

Our main research focus in near future is to analyze the uniqueness and stability properties of the simultaneous reconstruction problem, i.e. the problem of reconstructing the pair $(\eta, \sigma_{a,xf})$, in the fully nonlinear setting. This is an unsolved problem even in the diffusive regime~\cite{ReZh-SIAM13}, although numerical simulations we have so far suggested that uniqueness and stability both hold, at least in the regime where both coefficients are sufficiently large.

\section*{Acknowledgments}

We would like to thank the anonymous referees whose comments help us improve significantly the quality of this paper. This work is partially supported by the National Science Foundation through grant DMS-1321018, and the University of Texas at Austin through a Moncrief Grand Challenge Faculty Award.




\end{document}